\tikzset{
	modal/.style={>=stealth,shorten >=1pt,shorten <=1pt,auto,
		node distance=1.5cm,semithick},
	world/.style={circle,draw,minimum size=1cm,fill=gray!15},
	point/.style={circle,draw,fill=black,inner sep=0.5mm},
	xpoint/.style={circle,draw,inner sep=0.5mm},
	reflexive/.style={->,in=120,out=60,loop,looseness=#1},
	reflexive/.default={5},
	reflexive point/.style={->,in=135,out=45,loop,looseness=#1},
	reflexive point/.default={25},
}
\DeclareRobustCommand{\squarearrow}{\mathrel{\tikz[baseline=-0.6ex]{\draw[-{Square[open]}, line width=0.3pt] (0,0) -- (0.35,0);}}}
\DeclareRobustCommand{\rbarbarrow}{\mathrel{\tikz[baseline=-0.6ex]{\draw[-{Arc Barb[reversed]}, line width=0.3pt] (0,-.085) -- (0,0.15);}}}
\definecolor{green}{rgb}{0.0,0.5,0.0}
\newtheorem{thm}{Theorem}[section]
\newtheorem{cor}[thm]{Corollary}
\newtheorem{prop}[thm]{Proposition}
\newtheorem{lem}[thm]{Lemma}
\theoremstyle{definition}
\newtheorem{defn}[thm]{Definition}
\newtheorem{fact}[thm]{Fact}
\theoremstyle{remark}
\newcommand{\lang}[1][\sigma]{\mathcal{L}(#1)}
\newcommand{\gridlang}[2]{\mathcal{L}_{#1,#2}}
\newcommand{\langdiv}{\,\vert\,}
\newcommand{\realth}[2]{\mathbb{S}_{#1,#2}}
\newcommand{\realthall}{\mathbb{S}}
\newcommand{\ur}[1]{#1^\urcorner}
\newcommand{\qo}[1]{#1_{\bar{x}}^{\bar{c}}}
\newcommand{\model}{M}
\newcommand{\nats}{\mathbb{N}}
\newcommand{\deno}[3]{\llbracket #1\rrbracket^{#2}_{#3}}
\newcommand{\setb}[2]{\{#1\,:\,#2\}}  
\newcommand{\ps}[1]{\mathcal{P}(#1)}
\newcommand{\pso}[1]{\mathcal{P}_\omega(#1)}
\newcommand{\R}{\squarearrow}
\newcommand{\nd}[1]{\blacklozenge(#1)}
\newcommand{\yb}[1]{\boxtimes(#1)}
\newcommand{\K}{\mathsf{K}}
\newcommand{\fssfour}{\mathsf{FSS4}}
\newcommand{\sfive}{\mathsf{S5}}
\DeclareMathOperator{\dom}{dom}
\newcommand{\logic}{\mathsf{L}}
\newcommand{\class}[1][]{\mathcal{C}_{#1}}
\newcommand{\fs}{\mathsf{FS}}
\newcommand{\ipc}{\mathsf{IPC}}
\newcommand{\cpc}{\mathsf{CPC}}
\newcommand{\fofs}{\mathsf{FOFS}}
\newcommand{\fofsfour}{\mathsf{FOFS4}}
\newcommand{\fofsd}{\mathsf{FOFSD}}
\newcommand{\fofst}{\mathsf{FOFST}}
\newcommand{\fofsdfour}{\mathsf{FOFSD4}}
\newcommand{\fofssfour}{\mathsf{FOFSS4}}
\newcommand{\m}{\textsf{MIPC}}
\newcommand{\fofsfoura}{\mathsf{4}}
\newcommand{\fofsda}{\mathsf{D}}
\newcommand{\fofsta}{\mathsf{T}}
\newcommand{\fofsdfoura}{\mathsf{D4}}
\newcommand{\fofssfoura}{\mathsf{S4}}
\newcommand{\kba}{\mathbf{K \square a}}
\newcommand{\kbb}{\mathbf{K\square b}}
\newcommand{\kda}{\mathbf{K \lozenge a}}
\newcommand{\kdb}{\mathbf{K \lozenge b}}
\newcommand{\Univ}{\mathbf{UI}}
\newcommand{\Exist}{\mathbf{EG}}
\newcommand{\mopo}{\texttt{MP}}
\newcommand{\reg}{\texttt{Reg}}
\newcommand{\gen}{\texttt{Gen}}
\newcommand{\dboth}{\mathbf{D}}
\newcommand{\tb}{\mathbf{T \square}}
\newcommand{\td}{\mathbf{T \lozenge }}
\newcommand{\fourbox}{\mathbf{4\square}}
\newcommand{\fourdiamond}{\mathbf{4\lozenge}}
\newcommand{\fstwo}{\mathbf{FS2}}
\newcommand{\fsone}{\mathbf{FS1}}
\newcommand{\intuit}{\mathbf{I}}
\newcommand{\forallcon}{\mathbf{Con}}
\newcommand{\forallant}{\mathbf{Ant}}
\newcommand{\idref}{\mathbf{LI}}
\newcommand{\idsub}{\mathbf{InI}}
\newcommand{\NI}{\mathbf{NI}}
\newcommand{\ND}{\mathbf{ND}}
\newcommand{\I}{\preccurlyeq}
\newcommand{\Ires}[1][X]{\mathrel{{\preccurlyeq}|_#1}}
\newcommand{\M}{\mathrel{R}}
\newcommand{\Mres}[1][X]{\mathrel{R|_#1}}
\newcommand{\U}{\rbarbarrow}
\newcommand{\D}{D}
\newcommand{\DW}{d}
\newcommand{\DE}[1][-]{\asymp^{#1}}
\newcommand{\In}{I}
\newcommand{\Inn}{I^\nu}
\newcommand{\Inp}{I^\pi}
\newcommand{\WC}{\mathfrak{W}}
\newcommand{\IC}{\preccurlyeq_\mathfrak{t}}
\newcommand{\MC}{\mathrel{\mathfrak{R}}}
\newcommand{\DC}{\mathfrak{D}}
\newcommand{\DWC}{\mathfrak{d}}
\newcommand{\DEC}[1][-]{\asymp_\mathfrak{t}^{#1}}
\newcommand{\InC}{\mathfrak{I}}
\newcommand{\InnC}{\mathfrak{I}^\nu}
\newcommand{\InpC}{\mathfrak{I}^\pi}
\newcommand{\trf}{\mathfrak{F}}
\newcommand{\trfp}[2]{\trf_{#1,#2}}
\newcommand{\trm}{\mathfrak{M}}
\newcommand{\trmp}[2]{\trm_{#1,#2}}
\newcommand{\equals}{\approx}
\let\oldepsilon\epsilon
\let\epsilon\varepsilon
\let\varepsilon\oldepsilon
\let\oldphi\phi
\let\phi\varphi
\let\varphi\oldphi
\let\bigland\bigwedge
\let\biglor\bigvee
\crefname{subsection}{subsection}{subsections}
\newcommand{\DFref}[1]{Lemma \hyperref[#1]{\ref*{theorems}(\ref*{#1})}}
\newcommand{\doi}[1]{\url{https://doi.org/#1}}
\title{First-Order Fischer Servi Logic}
\author{A. Christensen}
\begin{document}
	
\maketitle

\begin{abstract}
	We prove the completeness of a first-order analogue of the Fischer Servi logic $\fs$ with respect to its expected birelational semantics. To this end we introduce the notion of the \textit{trace model} and, much like in a canonical model argument, prove a truth lemma. We conclude by examining a number of other first-order Fischer Servi logics, including a first-order analogue of $\fssfour$, whose completeness can be similarly proved.
\end{abstract}

\section{Introduction}
Taking the classical propositional calculus ($\cpc$) as a starting point, the majority of studied logics are obtained by some combination of adding quantifiers, weakening the propositional calculus, and introducing modalities. Not only does each of these ways of moving away from $\cpc$ have an extensive literature, but the pairwise combinations have also attracted considerable attention. Starting with quantified non-classical logics, first-order intuitionistic logic, formalized by Heyting \cite{heyting}, was born out of Brouwer's intuitionism \cite{brouwer} and accordingly is part of a rich tradition in the philosophy of mathematics. Quantified modal logic, first introduced to the literature by Barcan \cite{barcan} (see \cite{conandjan} for historical context), received early treatment from Kripke \cite{kripkesemcon}, while Quine \cite{quinequant,quineref} brought some of its apparent paradoxes into focus. Lastly, non-classical modal logics have been studied from the perspectives of mathematical logic (e.g., \cite{goldblatt,wolandzak}), theoretical computer science (e.g., \cite{bentonetal,bieanddep,depandrit}), and philosophy (e.g., \cite{field,holandman}).

Recently, there has been an uptick in interest in a particular non-classical modal logic: the Fischer Servi logic ($\fs$), which is an intuitionistic analogue of $\K$, the minimal normal modal logic on a classical base. The finite model property for $\fssfour$, which was an open problem introduced in Simpson's thesis \cite{simpson}, was resolved in the affirmative by Girlando et al. \cite{girlando}. In the domain of philosophical logic, Bobzien and Rumfitt \cite{bobzienandrumfitt} proposed that some extension of $\fs$ could be suitable for an intuitionistic modal logic of vagueness. There are formal reasons for privileging $\fs$, as well. Fischer Servi provides an argument that $\fs$ is the correct analogue of $\K$ by proving that it fully and faithfully embeds into a certain bimodal logic \cite{fischerservi2,fischerservi}. Perhaps more convincingly, Simpson \cite{simpson} notes that Stirling proved that the standard translation of modal formulas into first-order formulas (see, e.g., \cite[p.~84]{blackburn}) fully and faithfully embeds $\fs$ into first-order intuitionistic logic, providing additional evidence for its naturality.

In spite of the aforementioned contributions, however, very little work has been done generalizing $\cpc$ in all three directions simultaneously.\footnote{An important exception to this is quantified lax logic, most notably treated by Goldblatt in \cite{goldblatt2}.} In this paper, we will focus our attention on a first-order intuitionistic analogue of $\K$, which in some sense is the most obvious way to marry the three modifications---first-order logic is by far the most studied quantified logic, intuitionistic logic is the non-classical logic with the most historical precedent, and $\K$ enjoys a special status as the minimal normal modal logic. In this area, there is one positive result by Wijesekera \cite{wijesekera} showing completeness of a certain first-order intuitionistic modal logic with respect to a relational semantics. Wijesekera's frames are minimally structured in the sense that the intuitionistic relation is a partial order, the modal relation is unconstrained, and there are no compatibility conditions between the two. Even in this ostensibly simple case, Wijesekera needs to use a construction more delicate than a standard canonical model to establish the desired model existence theorem. Later, Gao \cite{gao} claimed a completeness proof for a first-order extension of $\mathsf{MIPC}$,\footnote{$\mathsf{MIPC}$ is an $\sfive$-like intuitionistic modal logic first introduced by Prior \cite{prior} and later studied by Bull \cite{bull} and Fischer Servi \cite{fischerservi2}.} though at least one crucial lemma was false, rendering the proof apparently unsalvageable \cite{bimbo}.

Our aim is to establish completeness of first-order Fischer Servi logic with respect to the expected birelational semantics. To this end, we introduce the concept of a trace model. After this, we note that the trace model construction can be easily adapted to establish completeness for five other first-order Fischer Servi logics, including the first-order analogue of $\fssfour$, the subject of \cite{girlando}. Additionally, for any of the six logics, we will treat three further extensions given by considering the necessity of identity and the necessity of distinctness.

The outline of this paper is as follows. \Cref{thelogic} introduces a Hilbert a system for first-order Fischer Servi logic and establishes a slew of deductive facts that are needed for the completeness proof. After this, a relational semantics is presented in \Cref{semantics}. \Cref{trace} then constructs the trace model and establishes a truth lemma, thereby establishing completeness. Finally, some extensions of the logic are treated in \Cref{otherlogics}.

\section{The Logic $\fofs$}
\label{thelogic}
\subsection{The Language}
We will work with a class of languages parametrized by signatures, which keep track of the non-logical symbols in the usual way.
\begin{defn}
	A \textit{signature} $\sigma=(\sigma^\nu,\sigma^\pi,\sigma^\alpha)$ is a triple where $\sigma^\nu$ is a countably infinite set of constants, $\sigma^\pi$ is a set countably many predicate symbols, and $\sigma^\alpha:\sigma^\pi\to\nats_{>0}$ assigns arities to the predicate symbols. 
\end{defn}
The cardinality requirements can be dispensed with, but we choose to keep the signature countable in order to streamline some arguments that would otherwise need to appeal to transfinite recursion. Additionally, the requirement that we have infinitely many constants is due to the fact that the Hilbert system we will introduce produces only sentences as theorems. This requirement could be removed by tweaking the system so that formulas with free variables could count as theorems. Functions could also be easily accommodated if desired.

Fix a countably infinite set of variables $V=\{v_0,v_1,v_2,\ldots\}$. Given a signature $\sigma$, we define the language $\mathcal{F}(\sigma)$ recursively: 
\[\phi::= P(t_1,\ldots,t_n)\langdiv s\equals t \langdiv(\phi\land\phi)\langdiv \langdiv (\phi \lor \phi) \langdiv (\phi\to\phi) \langdiv \square \phi\langdiv \lozenge\phi\langdiv \forall x \phi\langdiv \exists x \phi \langdiv \bot\]
where $x\in V$, $P\in\sigma^\pi$, $\sigma^\alpha(P)=n$, and $s$, $t$, and the $t_i$s are terms. We will write $\neg\phi$ for $(\phi\to\bot)$, $(\phi\leftrightarrow\psi)$ for $((\phi\to\psi)\land(\psi\to\phi))$, and $\top$ for $\neg\bot$. If $\phi$ is a formula and $c$ is a constant, $\phi_x^c$ is the result of replacing each instance of $c$ with an instance of $x$. If $\bar{c}=\langle c_1,\ldots, c_p\rangle$ is a list of distinct constants and $\bar{x}=\langle x_1,\ldots, x_p\rangle$ is a list a of distinct variables, we will write $\qo{\phi}$ for $(\cdots((\phi_{x_1}^{c_1})_{x_2}^{c_2})\cdots)_{x_n}^{c_n}$.

A \textit{sentence} is a formula with no free variables, and we write $\lang$ for the set of sentences in $\mathcal{F}(\sigma)$. A \textit{one-place formula} is a formula with exactly one free variable. If $\phi$ is a one-place formula, we might write $\phi(t)$ for the result of replacing every instance of the free variable with a term $t$. Note that because we will use this notation for one-place formulas exclusively, $\phi(c)$ is a sentence for any constant $c$.

\subsection{The Hilbert System}
A Hilbert system for the logic $\fofs$ is presented in \Cref{hilbertsystem}. Naturally, the system is simply a collection of axioms for $\fs$ together with axioms for first-order intuitionistic logic. The only real modification is that indiscernibility of identicals ($\idsub$) is restricted to non-modal formulas. \Cref{idextensions} handles the case where this restriction is lifted.

\begin{figure}[h]
	\begin{center}
		\begin{tabular}{| l | l | l |}
			\hline
			\multicolumn{3}{| c |}{The Logic $\fofs$}\\
			\hline
			$\intuit$  &  any substitution instance of a theorem of $\ipc$&\\
			$\kba$ & $\square (\phi\land \psi)\leftrightarrow (\square \phi\land\square \psi)$&\\
			$\kbb$ & $\square \top$&\\
			$\kda$ & $\lozenge (\phi\lor \psi)\leftrightarrow (\lozenge \phi\lor\lozenge \psi)$&\\
			$\kdb$ & $\neg\lozenge\bot$&\\
			$\fsone$ & $(\lozenge\phi\to\square\psi)\to\square(\phi\to\psi)$&\\
			$\fstwo$ & $\lozenge(\phi\to\psi)\to (\square\phi\to\lozenge\psi)$&\\
			$\Univ$ & $\forall x\phi(x)\to\phi(c)$&\\
			$\Exist$ & $\phi(c)\to\exists x\phi(x)$&\\
			$\forallant$ & $\forall x(\phi(x)\to\psi)\to (\exists x\phi(x)\to \psi)$& $\phi$ is a one-place formula\\
			$\forallcon$ & $\forall x(\phi\to\psi(x))\to (\phi\to \forall x\psi(x))$& $\psi$ is a one-place formula\\
			$\idref$ & $c\equals c$& \\
			$\idsub$ & $c_1 \equals c_2 \to (\phi(c_1)\to\phi(c_2))$& $\phi$ is modal-free\\
			\hline
			$\mopo$ & from $\phi\to \psi$ and $\phi$ infer $\psi$&\\
			$\gen$ & from $\phi(c)$ infer $\forall x\phi(x)$& $x$ is substitutable for $c$ in $\phi$\\
			$\reg$ & from $\phi\to\psi$ infer $\bigcirc\phi\to\bigcirc\psi$& $\bigcirc\in\{\square,\lozenge\}$\\
			\hline
		\end{tabular}
	\end{center}
	\caption{$c,c_1,c_2\in \sigma^\nu$, $x\in V$, and $\phi$ and $\psi$ are sentences unless otherwise noted}
	\label{hilbertsystem}
\end{figure}

We write $\vdash\phi$ if $\phi$ is a theorem of $\fofs$. For some $\Gamma\subseteq \lang$, we write $\Gamma\vdash \phi$ if there exists a finite set of formulas $\{\gamma_i\}_i$ such that $\vdash \bigland_i \gamma_i \to \phi$. Consistency of a pair of sets of sentences can then be defined.

\begin{defn}
	Where $\Gamma,\Omega\subseteq\lang$, we say that the pair $(\Gamma,\Omega)$ is \emph{consistent} if there does not exist a finite set of formulas $\{\gamma_i\}_i\subseteq \Omega$ such that $\Gamma\vdash\biglor_i\gamma_i$.
\end{defn}

Our next order of business is to introduce saturated theories, the natural generalizations of prime theories to the first-order setting. 

\begin{defn}
	A set of formulas $\Gamma\subseteq \lang$ is called a \emph{saturated theory} if it satisfies the following:
	\begin{center}
		\begin{tabular}{ll}
			(Consistency)& $\Gamma\nvdash\bot$.\\
			(Deductive closure)& For any $\phi\in\lang$, if $\Gamma\vdash\phi$, then $\phi\in \Gamma$.\\
			(Primeness)& If $\phi\lor\psi\in \Gamma$, then $\phi\in\Gamma$ or $\psi\in\Gamma$.\\
			(Henkinness)& If $\exists x\phi(x)\in\Gamma$, then $\phi(c)\in \Gamma$ for some constant $c\in\sigma^\nu$.
		\end{tabular}
	\end{center}
\end{defn}

For a set of sentences $\Gamma\subseteq \lang$, we write $\yb{\Gamma}$ for $\setb{\phi\in\lang}{\square\phi\in \Gamma}$. If additionally $\Gamma$ is a saturated theory, we write $\nd{\Gamma}$ for $\setb{\phi\in\lang}{\lozenge\phi\notin \Gamma}$. In later sections we will have many languages (varying only by constants) in play. We will often write $\nd{\Gamma}$ where $\Gamma$ is a saturated theory in one of them. Note that there is no ambiguity here since we can read off the constants of the language from $\Gamma$: a constant $c$ will be in the signature if and only if $c\equals c\in\Gamma$. The next lemma will be used implicitly throughout the paper. Its purpose is largely to allow us to use cleaner notation, replacing conjunctions or disjunctions with single sentences.

\begin{lem}
	Let $\Gamma$ be saturated. Then $\yb{\Gamma}$ is closed under conjunction and $\nd{\Gamma}$ is closed under disjunction.
\end{lem}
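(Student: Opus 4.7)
The plan is to prove the two closure properties by directly invoking the modal distribution axioms $\kba$ and $\kda$, combined with the deductive closure and primeness properties of saturated theories. Both halves are short and essentially symmetric in flavor, though they use different saturation properties.

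For the first claim, suppose $\phi,\psi\in\yb{\Gamma}$, so $\square\phi,\square\psi\in\Gamma$. Then $\Gamma\vdash \square\phi\land\square\psi$, and by axiom $\kba$ together with $\mopo$ we get $\Gamma\vdash\square(\phi\land\psi)$. Deductive closure then gives $\square(\phi\land\psi)\in\Gamma$, i.e.\ $\phi\land\psi\in\yb{\Gamma}$.

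For the second claim, suppose $\phi,\psi\in\nd{\Gamma}$, so $\lozenge\phi\notin\Gamma$ and $\lozenge\psi\notin\Gamma$. Assume towards contradiction that $\lozenge(\phi\lor\psi)\in\Gamma$. By the right-to-left direction of $\kda$ (or rather, the other direction: from $\lozenge(\phi\lor\psi)$ deduce $\lozenge\phi\lor\lozenge\psi$) plus $\mopo$, we get $\Gamma\vdash\lozenge\phi\lor\lozenge\psi$, hence $\lozenge\phi\lor\lozenge\psi\in\Gamma$ by deductive closure. Primeness then forces $\lozenge\phi\in\Gamma$ or $\lozenge\psi\in\Gamma$, contradicting our assumption. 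Hence $\lozenge(\phi\lor\psi)\notin\Gamma$, i.e.\ $\phi\lor\psi\in\nd{\Gamma}$.

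There is no real obstacle here; the only conceptual point worth flagging is that the two halves are not perfectly dual in terms of which saturation property is invoked. The $\square$-case needs only consistency and deductive closure (since it pushes a membership fact forward through a theorem), while the $\lozenge$-case crucially relies on \emph{primeness} to split the disjunction $\lozenge\phi\lor\lozenge\psi$. This asymmetry reflects the intuitionistic setting, where the duality between $\square$ and $\lozenge$ is broken.
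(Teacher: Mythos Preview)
Your proof is correct and follows essentially the same approach as the paper's own proof, which tersely cites $\kba$ and deductive closure for the first claim and primeness for the second. Your version simply spells out the steps (including the use of $\kda$ in the second claim, which the paper leaves implicit).
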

\begin{proof}
	Closure under conjunction for $\yb{\Gamma}$ follows from $\kba$ and the deductive closure of $\Gamma$. Closure under disjunction for $\nd{\Gamma}$ follows the primeness of $\Gamma$. 
\end{proof}

We are now in a position to provide a key extension result. The proof is completely standard in the first-order intuitionistic setting (see, e.g., \cite{dragalin,wijesekera}), but some of the argument's ingredients will appear in a later proof of another extension result.

\begin{lem}
	\label{disjunctionstep}
	Suppose that $(\Gamma,\Omega)$ is a consistent pair and that $\Gamma\vdash\phi\lor\psi$. If $(\Gamma\cup\{\phi\},\Omega)$ is inconsistent, then $(\Gamma\cup\{\psi\},\Omega)$ is consistent.
\end{lem}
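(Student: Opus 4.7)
The plan is to argue by contradiction: assume both $(\Gamma \cup \{\phi\}, \Omega)$ and $(\Gamma \cup \{\psi\}, \Omega)$ are inconsistent and derive that $(\Gamma, \Omega)$ itself is inconsistent, contradicting the hypothesis. The only logical content used is intuitionistic disjunction elimination, so the argument is entirely propositional.

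Concretely, from inconsistency of $(\Gamma \cup \{\phi\}, \Omega)$ I extract a finite $\{\alpha_j\}_j \subseteq \Omega$ and a finite conjunction $\gamma$ of elements of $\Gamma$ such that $\vdash \gamma \land \phi \to \bigvee_j \alpha_j$, equivalently $\vdash \gamma \to (\phi \to \bigvee_j \alpha_j)$. Analogously from $(\Gamma \cup \{\psi\}, \Omega)$ I obtain a finite $\{\beta_k\}_k \subseteq \Omega$ and $\delta$ a finite conjunction from $\Gamma$ with $\vdash \delta \to (\psi \to \bigvee_k \beta_k)$. Letting $\eta := \bigvee_j \alpha_j \lor \bigvee_k \beta_k$ and weakening the consequents, I get $\vdash \gamma \to (\phi \to \eta)$ and $\vdash \delta \to (\psi \to \eta)$, and hence by the intuitionistic tautology (in rule $\intuit$) implementing disjunction elimination,
\[\vdash (\gamma \land \delta) \to (\phi \lor \psi \to \eta).\]

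Now I invoke the hypothesis $\Gamma \vdash \phi \lor \psi$, which gives some finite conjunction $\epsilon$ of elements of $\Gamma$ with $\vdash \epsilon \to \phi \lor \psi$. Since $\gamma$, $\delta$, and $\epsilon$ are all finite conjunctions of sentences in $\Gamma$, so is $\gamma \land \delta \land \epsilon$, and combining the two derivations yields $\vdash (\gamma \land \delta \land \epsilon) \to \eta$, i.e.\ $\Gamma \vdash \bigvee_j \alpha_j \lor \bigvee_k \beta_k$. Since each $\alpha_j$ and $\beta_k$ lies in $\Omega$, this witnesses inconsistency of $(\Gamma, \Omega)$, the desired contradiction.

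There is no genuine obstacle here: the only thing to be careful about is purely bookkeeping, namely that the definition of $\Gamma \vdash \chi$ allows us to paste together the three finite conjunctions $\gamma$, $\delta$, $\epsilon$ from $\Gamma$ into a single finite conjunction, and that inconsistency of a pair is phrased in terms of an arbitrary finite disjunction of elements of $\Omega$ (so that $\eta$ qualifies even though $\Omega$ itself need not be closed under disjunction). Everything else is a single application of intuitionistic $\lor$-elimination available via the axiom scheme $\intuit$.
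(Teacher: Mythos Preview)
Your proof is correct and follows essentially the same route as the paper: argue by contradiction, extract from each inconsistency an implication $\phi\to\beta$ and $\psi\to\beta'$ with $\beta,\beta'$ finite disjunctions from $\Omega$, then use intuitionistic $\lor$-elimination together with $\Gamma\vdash\phi\lor\psi$ to obtain $\Gamma\vdash\beta\lor\beta'$, contradicting consistency of $(\Gamma,\Omega)$. The only difference is that you unpack the finite conjunctions from $\Gamma$ explicitly, whereas the paper works directly at the level of $\Gamma\vdash$; this is purely cosmetic.
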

\begin{proof}
	Suppose that both $(\Gamma\cup\{\phi\},\Omega)$ and $(\Gamma\cup\{\psi\},\Omega)$ are inconsistent. Then we have $\Gamma\vdash \phi \to \beta$ and $\Gamma\vdash \psi\to \beta'$ where $\beta$ and $\beta'$ are disjunctions of elements of $\Omega$. Since $\Gamma\vdash \phi\lor\psi$, intuitionistic reasoning grants us $\Gamma\vdash\beta\lor\beta'$, contradicting the consistency of $(\Gamma,\Omega)$.
\end{proof}

\begin{lem}
	\label{existentialstep}
	Suppose that $(\Gamma,\Omega)$ is a consistent pair and that $\Gamma\vdash\exists x\phi(x)$. If $c$ is a constant not appearing in a formula in $\Gamma$ or $\Omega$, then $(\Gamma\cup\{\phi(c)\},\Omega)$ is consistent.
\end{lem}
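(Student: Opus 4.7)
My plan is to argue by contradiction, leveraging the freshness of $c$ via the generalization rule, and then to shuffle the resulting universal quantifier past the implication using $\forallcon$ and $\forallant$.

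Assume, toward a contradiction, that $(\Gamma\cup\{\phi(c)\},\Omega)$ is inconsistent. Unpacking the definition, there is a finite $\{\delta_i\}_i\subseteq\Omega$ such that, setting $\beta:=\biglor_i\delta_i$, we have $\Gamma\cup\{\phi(c)\}\vdash\beta$, hence $\Gamma\vdash\phi(c)\to\beta$. Unpacking once more, there is a finite $\{\gamma_j\}_j\subseteq\Gamma$ whose conjunction $\theta$ satisfies $\vdash\theta\to(\phi(c)\to\beta)$. By the hypothesis on $c$, the constant $c$ occurs in neither $\theta$ nor $\beta$, so $x$ (taken fresh) is substitutable for $c$ in $\theta\to(\phi(c)\to\beta)$ and the only occurrences of $c$ are inside $\phi(c)$.

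Now I apply $\gen$ to the theorem above, obtaining
\[
\vdash \forall x\bigl(\theta\to(\phi(x)\to\beta)\bigr).
\]
Since $\theta$ has no free variables, intuitionistic currying together with $\forallcon$ (with $\theta$ as the antecedent and $\phi(x)\to\beta$ as the $x$-dependent consequent) yields $\vdash\theta\to\forall x(\phi(x)\to\beta)$. Then $\forallant$ applied to $\phi$ with consequent $\beta$ gives $\vdash\forall x(\phi(x)\to\beta)\to(\exists x\phi(x)\to\beta)$, and chaining these two with $\mopo$ produces $\vdash\theta\to(\exists x\phi(x)\to\beta)$, i.e.\ $\Gamma\vdash\exists x\phi(x)\to\beta$.

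Combined with the hypothesis $\Gamma\vdash\exists x\phi(x)$, this gives $\Gamma\vdash\beta$, contradicting the consistency of $(\Gamma,\Omega)$. The main bookkeeping concern—and the only place real care is required—is ensuring that after contracting the derivation into a single theorem, every occurrence of $c$ is confined to the $\phi(c)$ subformula so that the application of $\gen$ is licit; this is exactly what the freshness hypothesis on $c$ guarantees.
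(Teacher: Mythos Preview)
Your proof is correct and follows essentially the same argument as the paper: assume inconsistency, extract $\vdash\theta\to(\phi(c)\to\beta)$ with $\theta$ a finite conjunction from $\Gamma$ and $\beta$ a finite disjunction from $\Omega$, use $\gen$ (justified by freshness of $c$) followed by $\forallcon$ and $\forallant$ to derive $\Gamma\vdash\exists x\phi(x)\to\beta$, and conclude $\Gamma\vdash\beta$ for the contradiction. Your exposition is slightly more explicit about the licitness of $\gen$, but the route is the same.
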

\begin{proof}
	Suppose that $\Gamma\vdash \exists x \phi(x)$ but that $(\Gamma\cup\{\phi(c)\},\Omega)$ is inconsistent. Then $\vdash\alpha\land \phi(c)\to\beta$ where $\alpha$ is a conjunction of elements of $\Gamma$ and $\beta$ is a disjunction of elements of $\Omega$. This is equivalent to $\vdash\alpha\to(\phi(c)\to\beta)$. By $\gen$, $\vdash \forall x(\alpha\to(\phi(x)\to\beta))$. Applying $\forallcon$, we have $\vdash\alpha\to\forall x(\phi(c)\to\beta)$, so $\Gamma\vdash\forall x(\phi(c)\to\beta)$. By $\forallant$, $\Gamma\vdash \exists x\phi(x)\to\beta$, so $\Gamma\vdash \beta$, contradicting that $(\Gamma,\Omega)$ is consistent.
\end{proof}

\begin{prop}
	\label{getreal}
	Suppose that $(\Gamma,\Omega)$ is a consistent pair and that $\sigma'$ is some signature such that $\Gamma,\Omega\subseteq \lang[\sigma']$ and $\sigma'^\nu$ contains infinitely many constants that do not appear in a formula in $\Gamma$ or $\Omega$. Then there exists a saturated theory $\Gamma'$ over $\lang[\sigma']$ such that $\Gamma\subseteq \Gamma'$ and $\Omega\cap\Gamma'=\varnothing$.
\end{prop}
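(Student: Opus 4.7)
The plan is to carry out a standard Lindenbaum-style construction, iteratively enlarging $\Gamma$ by one sentence at a time while preserving consistency with $\Omega$ and simultaneously ensuring primeness and Henkinness. Since $\sigma'$ is countable, enumerate $\lang[\sigma']$ as $\phi_0,\phi_1,\phi_2,\ldots$. Set aside the infinite stock of constants in $\sigma'^\nu$ not occurring in any formula of $\Gamma\cup\Omega$ as a reserve to draw fresh witnesses from.

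Build a chain $\Gamma=\Gamma_0\subseteq\Gamma_1\subseteq\cdots$ with each $(\Gamma_n,\Omega)$ consistent, and with only finitely many of the reserved constants used by stage $n$. At stage $n+1$, inspect $\phi_n$: if $(\Gamma_n\cup\{\phi_n\},\Omega)$ is inconsistent, set $\Gamma_{n+1}=\Gamma_n$; otherwise, distinguish by the shape of $\phi_n$. If $\phi_n=\alpha\lor\beta$, apply \Cref{disjunctionstep} to select a disjunct $\gamma\in\{\alpha,\beta\}$ such that $(\Gamma_n\cup\{\phi_n,\gamma\},\Omega)$ is consistent and set $\Gamma_{n+1}=\Gamma_n\cup\{\phi_n,\gamma\}$. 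If $\phi_n=\exists x\psi(x)$, pick a reserved constant $c$ not yet used at any earlier stage (available since only finitely many have been consumed so far) and invoke \Cref{existentialstep} to set $\Gamma_{n+1}=\Gamma_n\cup\{\phi_n,\psi(c)\}$. Otherwise, set $\Gamma_{n+1}=\Gamma_n\cup\{\phi_n\}$. Let $\Gamma'=\bigcup_n\Gamma_n$.

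Verification of the four saturation conditions then proceeds in order. Consistency of $(\Gamma',\Omega)$ follows by finite character, since any derivation $\Gamma'\vdash\biglor_i\gamma_i$ uses only finitely many premises, hence lies within some $\Gamma_n$; this immediately forces $\Omega\cap\Gamma'=\varnothing$. For deductive closure, suppose $\Gamma'\vdash\phi$ and $\phi=\phi_n$: if $(\Gamma_n\cup\{\phi_n\},\Omega)$ were inconsistent, then $\Gamma_n\vdash\phi_n\to\beta$ for some disjunction $\beta$ of elements of $\Omega$, and combining with $\Gamma'\vdash\phi_n$ would yield $\Gamma'\vdash\beta$, contradicting consistency; so $\phi_n$ entered at stage $n+1$. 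Primeness and Henkinness are then read off directly from the construction using deductive closure: any $\alpha\lor\beta\in\Gamma'$ equals some $\phi_n$ which was added, and at that stage one of its disjuncts was thrown in too; likewise for existentials and their Henkin witnesses.

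I do not anticipate any serious obstacle; the argument is essentially bookkeeping, and the only point requiring a bit of care is ensuring that fresh constants remain available for the existential step at every stage, which is handled by the observation that at most finitely many constants have been consumed after finitely many stages while the reserve is infinite by hypothesis.
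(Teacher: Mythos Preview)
Your proposal is correct and takes essentially the same approach as the paper: a Lindenbaum-style construction that alternates between handling disjunctions via \Cref{disjunctionstep} and existentials via \Cref{existentialstep}, with fresh constants drawn from the infinite reserve. The only organizational difference is that you enumerate all sentences and decide membership for each (thereby building in deductive closure), whereas the paper enumerates only the disjunctive and existential sentences and takes the deductive closure of the union at the end; both are standard variants of the same argument.
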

\begin{proof}
	Fix the following enumerations in $\lang[\sigma']$:
	\begin{align*}
		\delta_i &= \phi^0_i\lor\phi^1_i&\text{for all disjunctive sentences}\\
		\epsilon_i &= \exists x\psi_i(x)&\text{for all existentially quantified sentences.}
	\end{align*}
	Set $\Gamma_0 = \Gamma$.
	\begin{itemize}
		\item If $n+1$ is odd, take the least index $k$ such that $\Gamma\vdash \delta_k$ but $\Gamma_n$ proves neither $\phi_k^0$ nor $\phi_k^1$. (If none exists, set $\Gamma_{n+1}=\Gamma_n$.) Set \[\Gamma_{n+1}=\begin{cases}
			\Gamma_n\cup\{\phi_k^0\} & (\Gamma_n\cup\{\phi_k^0\}, \Omega) \text{ is consistent} \\
			\Gamma_n\cup\{\phi_k^1\} & \text{else.}
		\end{cases}\]
		\item If $n+1$ is even, let $k$ be the least index such that $\Gamma$ proves $\epsilon_k$ but does not prove $\psi_k(c)$ for any constant $c$. (If none exists, set $\Gamma_{n+1}=\Gamma_n$.) Set $\Gamma_{n+1} =\Gamma_n\cup\{ \phi(c')\}$, where $c'$ is some constant in $\sigma'^\nu$ not appearing in $\Gamma_n$.
	\end{itemize}
	Let $\Gamma'$ be the deductive closure of  $\bigcup_{n<\omega} \Gamma_n$.
	
	First we check that $(\Gamma',\Omega)$ is consistent. It is sufficient to verify by induction that for each $n$, $(\Gamma_n,\Omega)$ is consistent, since every consequence of $\Gamma'$ is a consequence of one of the $\Gamma_n$s. If $n=0$, this is just by assumption. Otherwise if $n$ is odd, we have two cases. If $(\Gamma_n\cup\phi_k^0,\Omega)$ is consistent, there is nothing to check. Otherwise, we apply the induction hypothesis and \Cref{disjunctionstep}. For the case where $n$ is even, we are done by \Cref{existentialstep}.
	
	To see that $\Gamma'$ is prime, suppose that $\phi^0\lor\phi^1\in \Gamma'$. This means that $\Gamma_n\vdash\phi\lor\psi$ for some $n$. By the construction, either $\phi^0$ or $\phi^1$ was added to our set some finite number of steps later. Henkinness is checked similarly, since if $\Gamma'\vdash\exists x\psi(x)$, then for some $n$, $\Gamma_n\vdash \exists x\psi(x)$. A witness was then added some finite number of steps later. 
\end{proof}

\subsection{Useful Deductive Facts}
We conclude this section by presenting a slue of useful theorems and rules derivable in the system $\fofs$. We start with the theorems.

\begin{lem} The following are theorems of $\fofs$:
	\label{theorems}
	\begin{enumerate}[{\normalfont (1)}]
		\item \label{necdis}$\biglor_i\square\phi_i\to \square\biglor_i\phi_i$,
		\item $(\lozenge\phi\land\square\psi)\to \lozenge(\phi\land \psi)$,
		\item \label{fstworeverse} $(\lozenge\phi\land \square(\phi\to\psi))\to\lozenge\psi$,
		\item \label{eucid} $(c_1\equals c_2\land c_1\equals c_3)\to (c_2\equals c_3)$,
		\item \label{barcanish} $\lozenge\forall x\phi(x)\to\forall x\lozenge\phi(x)$.
	\end{enumerate}
\end{lem}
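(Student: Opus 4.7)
The plan is to prove each part via a short Hilbert derivation combining the $\reg$ rule with one of the $\fs$ or equality axioms; only (5) needs the first-order machinery. I would go through them in turn, noting where the main subtlety lies.

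For (1), note that $\phi_j \to \biglor_i \phi_i$ is an $\ipc$-theorem for each $j$. Applying $\reg$ yields $\square\phi_j \to \square\biglor_i\phi_i$ for each $j$, and standard intuitionistic propositional reasoning collects the disjunction on the left. For (2), I would use that $\phi \to (\psi \to \phi\land\psi)$ is intuitionistically valid; $\reg$ gives $\lozenge\phi \to \lozenge(\psi \to \phi\land\psi)$, and the relevant instance of $\fstwo$, namely $\lozenge(\psi \to \phi\land\psi) \to (\square\psi \to \lozenge(\phi\land\psi))$, chains with it to deliver the result. Part (3) then follows quickly from (2), which supplies $(\lozenge\phi \land \square(\phi\to\psi)) \to \lozenge(\phi \land (\phi\to\psi))$; since $\phi \land (\phi\to\psi) \to \psi$ is an $\ipc$-theorem, $\reg$ closes the gap. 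Part (4) is an immediate instance of $\idsub$ applied to the modal-free one-place formula $x \equals c_3$, giving $c_1\equals c_2 \to (c_1\equals c_3 \to c_2\equals c_3)$, which is (4) after uncurrying.

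The hard part will be (5), where care is needed with the constant/variable conventions of $\gen$ and $\forallcon$. I would pick a constant $c$ not appearing in $\phi$; from $\Univ$ we have $\forall x\phi(x) \to \phi(c)$, and $\reg$ yields $\lozenge\forall x\phi(x) \to \lozenge\phi(c)$. Applying $\gen$, substituting $c$ by a fresh variable $y \neq x$, produces $\forall y(\lozenge\forall x\phi(x) \to \lozenge\phi(y))$; since the antecedent contains no free $y$, $\forallcon$ then yields $\lozenge\forall x\phi(x) \to \forall y\lozenge\phi(y)$, which is alpha-equivalent to the target. I anticipate no surprises beyond this bookkeeping of fresh variables.
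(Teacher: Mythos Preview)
Your proposal is correct and matches the paper's proof essentially step for step: each of (1)--(4) is derived exactly as you describe, and (5) uses the same $\Univ$/$\reg$/$\gen$/$\forallcon$ chain. The only cosmetic difference is that the paper reuses the variable $x$ in the $\gen$ step of (5) rather than introducing a fresh $y$; your version with $y$ is arguably cleaner bookkeeping but the argument is the same.
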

\begin{proof}\hphantom{.}
	\begin{enumerate}
		\item For each $k$, we have $\vdash \phi_k\to\biglor_i\phi_i$ by intuitionistic reasoning. Regularity of the box implies $\vdash\square\phi_k\to\square\biglor_i \phi_i$. More intuitionistic reasoning allows us to conclude $\vdash \biglor_i\square\phi_i\to \square\biglor_i\phi_i$.
		\item By $\intuit$, we have $\vdash \phi\to(\psi\to(\phi\land\psi))$. Applying $\reg$, $\vdash \lozenge\phi\to\lozenge(\psi\to(\phi\land\psi))$. An instance of $\fstwo$ is $\lozenge(\psi\to(\phi\land\psi))\to (\square\psi\to\lozenge(\phi\land \psi))$, so we have $\vdash\lozenge\phi\to(\square\psi\to\lozenge(\phi\land \psi))$, which is intuitionistically equivalent to the desired theorem. 
		\item By the previous fact, $\vdash (\lozenge\phi\land \square(\phi\to\psi))\to\lozenge(\phi\land(\phi\to\psi))$. Additionally, $\intuit$ and $\reg$ entail $\vdash \lozenge(\phi\land (\phi\to\psi))\to\lozenge \psi$. Therefore, $\vdash (\lozenge\phi\land \square(\phi\to\psi))\to\lozenge\psi$.
		\item An instance of $\idsub$ where $\phi(x)=x\equals c_3$ yields $\vdash c_1\equals c_2\to(c_1\equals c_3\to c_2\equals c_3)$, which is intuitionistically equivalent to the claimed theorem.
		\item Let $c$ be some constant not appearing in $\phi$. By $\Univ$, $\vdash \forall x\phi(x)\to\phi(c)$. Applying $\reg$ for $\lozenge$, we have $\vdash\lozenge\forall x\phi(x)\to\lozenge\phi(c)$. Since $c$ does not appear in $\phi$, $\gen$ affords us $\vdash\forall x (\lozenge\forall x\phi(x)\to\lozenge\phi(x))$. Finally, $\forallcon$ yields $\vdash \lozenge\forall x\phi(x)\to\forall x\lozenge\phi(x)$.\qedhere
	\end{enumerate}
\end{proof}

The rule of necessitation is also derivable.

\begin{lem}[Necessitation]
	\label{nec}
	If $\vdash\phi$, then $\vdash\square\phi$.
\end{lem}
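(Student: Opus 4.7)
The plan is to derive necessitation from the $\reg$ rule together with the axiom $\kbb$, mimicking the standard trick used in weak regular modal logics that lack necessitation as a primitive rule. The only ingredients available for introducing a $\square$ from nothing are $\kbb$ (which gives $\square\top$) and $\reg$ (which lets us push $\square$ across an implication), so these must do the work.

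Concretely, suppose $\vdash \phi$. First I would use $\intuit$ to note that $\vdash \phi \to (\top \to \phi)$, and then apply $\mopo$ to conclude $\vdash \top \to \phi$. Next, applying $\reg$ for the box gives $\vdash \square\top \to \square\phi$. Since $\kbb$ supplies $\vdash \square\top$, one more application of $\mopo$ yields $\vdash \square\phi$, as desired.

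There is no real obstacle here; the main point is just to observe that the ``$\square\top$ plus regularity'' pattern substitutes for a primitive necessitation rule, which is exactly why $\kbb$ is included as an axiom alongside the regularity rule rather than giving necessitation directly. The argument does not use $\fsone$, $\fstwo$, or any quantifier machinery, so it goes through uniformly in $\fofs$.
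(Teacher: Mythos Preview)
Your proof is correct and is essentially identical to the paper's own argument: from $\vdash\phi$ obtain $\vdash\top\to\phi$, apply $\reg$ to get $\vdash\square\top\to\square\phi$, and then use $\kbb$ with $\mopo$ to conclude $\vdash\square\phi$. (The paper cites $\kba$ at the last step, but this appears to be a typo for $\kbb$.)
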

\begin{proof}
	If $\vdash \phi$, intuitionistic reasoning grants us $\vdash\top\to\phi$. Regularity of the box implies $\vdash\square\top\to\square\phi$. Finally, $\kba$ and $\mopo$ allow us to conclude $\vdash\square\phi$.
\end{proof}

We will not need to use necessitation directly, however. Instead, we will make use of the following lemma, which will often be referenced by simply mentioning that we are \textit{reasoning under the box}.

\begin{cor}[Reasoning under the box]
	\label{rubox}
	If $\Gamma\vdash \square\phi$, $\Gamma\vdash \square \psi$, and $\vdash (\phi\land\psi)\to\chi$, then $\Gamma\vdash \square\chi$.
\end{cor}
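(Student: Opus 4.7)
The plan is straightforward and relies directly on the $\K$-like axioms rather than on necessitation itself, although a route through necessitation would also work.

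First, I would combine the two boxed hypotheses into a single boxed conjunction. Since $\Gamma\vdash\square\phi$ and $\Gamma\vdash\square\psi$, intuitionistic reasoning gives $\Gamma\vdash\square\phi\land\square\psi$, and one direction of $\kba$ together with $\mopo$ then yields $\Gamma\vdash\square(\phi\land\psi)$.

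Next, I would push the given tautology under a box using regularity. From $\vdash(\phi\land\psi)\to\chi$, the rule $\reg$ gives $\vdash\square(\phi\land\psi)\to\square\chi$. A single application of $\mopo$ with $\Gamma\vdash\square(\phi\land\psi)$ then delivers $\Gamma\vdash\square\chi$.

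There is essentially no obstacle here; the only subtlety is that we apply $\reg$ to a theorem (something derivable from $\varnothing$) rather than from $\Gamma$, which is legitimate since $\reg$ is stated as an inference rule on theorems. Note that this is why the lemma is phrased with $\vdash(\phi\land\psi)\to\chi$ rather than $\Gamma\vdash(\phi\land\psi)\to\chi$, and it is also why we do not need to invoke \Cref{nec} separately: the hard work of passing from a theorem to its necessitation is already encapsulated in $\reg$.
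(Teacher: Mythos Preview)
Your argument is correct. Both you and the paper first combine $\Gamma\vdash\square\phi$ and $\Gamma\vdash\square\psi$ into $\Gamma\vdash\square(\phi\land\psi)$ via $\kba$, but from there the routes diverge slightly. The paper applies $\reg$ to the intuitionistic tautology $((\phi\land\psi)\land((\phi\land\psi)\to\chi))\to\chi$, then uses $\kba$ a second time to split the antecedent, and finally invokes necessitation (\Cref{nec}) on the hypothesis $\vdash(\phi\land\psi)\to\chi$ to obtain $\vdash\square((\phi\land\psi)\to\chi)$. You instead apply $\reg$ directly to the hypothesis $\vdash(\phi\land\psi)\to\chi$, yielding $\vdash\square(\phi\land\psi)\to\square\chi$ in one step, and finish with a single $\mopo$. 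Your route is shorter and, as you observe, shows that the result does not actually depend on \Cref{nec} despite being labeled a corollary of it; the paper's detour through necessitation is unnecessary here.
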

\begin{proof}
	First note that $\vdash ((\phi\land\psi)\land ((\phi\land\psi)\to\chi))\to \chi$ by $\intuit$. By the regularity of $\square$, we have $\vdash \square((\phi\land\psi)\land ((\phi\land\psi)\to\chi))\to \square\chi$. By $\kba$, this is equivalent to $\vdash (\square(\phi\land\psi)\land \square((\phi\land\psi)\to\chi))\to \square\chi$. By the first assumption and $\kba$, $\Gamma\vdash \square(\phi\land\psi)$. By the second assumption and \Cref{nec}, $\vdash\square ((\phi\land\psi)\to \chi)$. Therefore, $\Gamma\vdash\square\chi$.
\end{proof}

Finally we have two lemmas that exhibit useful deductive moves from first-order intuitionistic logic.

\begin{lem}
	\label{gengen}
	Suppose that the constant $c$ appears in neither any of the sentences in $\Gamma\subseteq\lang$ nor in $\phi(x)$. If $\Gamma\vdash \phi(c)$, then $\Gamma\vdash\forall x\phi(x)$. 
\end{lem}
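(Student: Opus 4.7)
The plan is to follow the standard ``generalization on constants'' argument from first-order intuitionistic logic, mirroring the pattern already used in the proof of \Cref{existentialstep}. The idea is to push the constant $c$ from being a ``parameter'' in a concrete deduction to a bound variable via an application of $\gen$, after first isolating the $\Gamma$-hypotheses into a single formula that $c$ does not touch.

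First, I would unpack the assumption $\Gamma\vdash\phi(c)$ to obtain a finite conjunction $\alpha=\bigland_i\gamma_i$ of elements of $\Gamma$ with $\vdash\alpha\to\phi(c)$. Since $c$ appears in no sentence of $\Gamma$, it does not appear in $\alpha$. I would then apply $\gen$ to this theorem: treating $\alpha\to\phi(x)$ as a one-place formula in $x$, the hypothesis that $c$ occurs in neither $\alpha$ nor $\phi(x)$ (and that $x$ is the free variable of the original one-place formula $\phi(x)$, hence not bound anywhere in it) guarantees that $(\alpha\to\phi(c))_x^c$ is literally $\alpha\to\phi(x)$ and that $x$ is substitutable for $c$. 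Thus $\vdash\forall x(\alpha\to\phi(x))$.

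Next, since $\alpha$ does not contain $x$ free, I would apply $\forallcon$ to obtain $\vdash\forall x(\alpha\to\phi(x))\to(\alpha\to\forall x\phi(x))$, and by $\mopo$ conclude $\vdash\alpha\to\forall x\phi(x)$. Because $\alpha$ is a conjunction of elements of $\Gamma$, this gives $\Gamma\vdash\forall x\phi(x)$, as desired.

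There is no real obstacle here; the only mild subtlety is the side condition for $\gen$, namely that $x$ be substitutable for $c$ in the formula, which is automatic because $c$ does not occur in $\phi(x)$ and $x$ is by assumption free (rather than bound) in $\phi(x)$. Aside from that, the argument is a direct reshuffling of $\gen$ and $\forallcon$ that parallels the existential-quantifier manipulation carried out in \Cref{existentialstep}.
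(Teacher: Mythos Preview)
Your proof is correct and follows essentially the same approach as the paper: unpack $\Gamma\vdash\phi(c)$ to $\vdash\alpha\to\phi(c)$ for a finite conjunction $\alpha$ of elements of $\Gamma$, apply $\gen$ to obtain $\vdash\forall x(\alpha\to\phi(x))$, then apply $\forallcon$ and $\mopo$ to conclude $\Gamma\vdash\forall x\phi(x)$. Your explicit verification of the substitutability side condition for $\gen$ is a welcome addition that the paper leaves implicit.
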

\begin{proof}
	For some finite collection of sentences $\{\gamma_i\}_i\subseteq \Gamma$, we have $\vdash\bigland_i \gamma_i \to\phi(c)$. By $\gen$, $\vdash \forall x(\bigland_i \psi_i \to\phi(x))$, and then by $\forallcon$, $\vdash \bigland_i \psi_i \to\forall x\phi(x)$. Therefore $\Gamma\vdash\forall x\phi(x)$.
\end{proof}

\begin{lem}
	\label{exsuff}
	Suppose that $\Gamma\vdash \phi\to\psi$ and that $\phi$ contains constants $\bar{c}=\langle c_1,\ldots,c_p\rangle$ not appearing in $\psi$ or any sentences of $\Gamma$. Then $\Gamma\vdash \exists \bar{x}\qo{\phi}\to\psi$ where each variable $x_i$ in $\bar{x}=\langle x_1,\ldots x_p\rangle$ does not appear in $\phi$.
\end{lem}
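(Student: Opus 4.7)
The plan is induction on $p$, the length of $\bar{c}$. The base case $p=0$ is immediate, since $\exists \bar x\, \qo{\phi}$ collapses to $\phi$ and there is nothing to prove.

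For the inductive step, assume the lemma for $p-1$ and suppose $\Gamma\vdash\phi\to\psi$ with $c_1,\ldots,c_p$ all absent from $\psi$ and from the sentences of $\Gamma$. First I would peel off the single constant $c_p$ by a move modelled on the proof of \Cref{existentialstep}. Unfolding the definition of $\vdash$, fix a finite conjunction $\alpha$ of elements of $\Gamma$ with $\vdash\alpha\to(\phi\to\psi)$. Since $c_p$ appears in neither $\alpha$ nor $\psi$, the rule $\gen$ yields $\vdash\forall x_p\bigl(\alpha\to(\phi_{x_p}^{c_p}\to\psi)\bigr)$; then $\forallcon$ (applied twice, pulling the quantifier past both $\alpha$ and $\psi$, each of which is $x_p$-free) produces $\vdash\alpha\to\bigl(\exists x_p\,\phi_{x_p}^{c_p}\to\psi\bigr)$ after an invocation of $\forallant$. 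This gives $\Gamma\vdash\exists x_p\,\phi_{x_p}^{c_p}\to\psi$.

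Now I would apply the inductive hypothesis to the sentence $\exists x_p\,\phi_{x_p}^{c_p}$ in place of $\phi$, taking the shorter list $\langle c_1,\ldots,c_{p-1}\rangle$ as the constants to be abstracted. The side hypotheses carry over automatically: the constants $c_1,\ldots,c_{p-1}$ were by assumption absent from $\psi$ and from $\Gamma$, and they remain absent after the substitution that replaced $c_p$ with $x_p$. The conclusion of the inductive hypothesis is precisely $\Gamma\vdash\exists x_1\cdots\exists x_p\,\qo{\phi}\to\psi$, which is what we want.

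The only delicate point is the bookkeeping of the substitutability side conditions on $\gen$ and $\forallcon$, together with the fact that the chosen variables $x_i$ are stipulated not to appear in $\phi$, so each successive substitution $\phi_{x_i}^{c_i}$ is well-defined and commutes with the others in the way needed to identify the nested substitutions with $\qo{\phi}$. Once this is verified the rest is routine propagation through the induction.
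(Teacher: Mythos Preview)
Your argument is correct. The minor wobble in the description (``$\forallcon$ applied twice, pulling the quantifier past both $\alpha$ and $\psi$'') is not quite right---one application of $\forallcon$ moves the quantifier past $\alpha$, and then a single use of $\forallant$ converts $\forall x_p(\phi_{x_p}^{c_p}\to\psi)$ into $\exists x_p\,\phi_{x_p}^{c_p}\to\psi$---but you do invoke $\forallant$ in the end, so the derivation you sketch goes through.

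The paper's proof is organized differently. Rather than inducting and peeling off one constant at a time, it first invokes \Cref{gengen} (which already packages the $\gen$/$\forallcon$ combination you unfold by hand) to obtain $\Gamma\vdash\forall\bar{x}(\qo{\phi}\to\psi)$ in one stroke, and then applies $\forallant$ repeatedly to move all the existential quantifiers into the antecedent. Your inductive argument essentially re-derives the content of \Cref{gengen} inline at each step; the paper's route is shorter precisely because that work has already been factored out. Substantively, though, the two proofs use the same axioms and rules and differ only in how the bookkeeping is bundled.
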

\begin{proof}
	First we apply \Cref{gengen} to obtain $\Gamma\vdash \forall \bar{x}(\qo{\phi}\to\psi)$. Repeated application of $\forallant$ yields $\Gamma\vdash \exists\bar{x}\qo{\phi}\to\psi$.
\end{proof}

\section{Relational Semantics}
\subsection{Frames}
\label{semantics}
We begin by recalling the definition of a Fisher Servi frame, which are the natural relational structures for semantics for $\fs$. 
\begin{defn}
	\label{fsframe}
	A \textit{Fischer Servi frame} is a triple $(W,\preccurlyeq, \M)$ where $\preccurlyeq$ is a partial order and $\M$ is a binary relation that satisfy the following compatibility conditions:
	\begin{enumerate}[(FC1)]
		\item $(\preccurlyeq\circ\M)\subseteq(\M\circ \preccurlyeq) $;
		\item $(\preccurlyeq\circ \M^{-1})\subseteq (\M^{-1}\circ\preccurlyeq)$.
	\end{enumerate}
\end{defn}

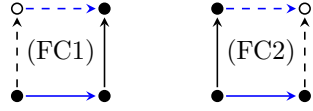
\begin{figure}
	\begin{center}
		\begin{tikzpicture}[modal]
			\node[point] (w) {};
			\node[xpoint] (w') [above=1cm of w] {};
			\node[point] (x) [right=1cm of w] {};
			\node[point] (x') [right=1cm of w'] {};
			\path[->] (w) edge[color=blue] (x);
			\path[->] (x) edge (x');
			\path[->] (w) edge[dashed] (w');
			\path[->] (w') edge[color=blue,dashed] (x');
			\node (one) at ($(w)!0.5!(x')$) {(FC1)};
			\node[point] (W) [right=2.5cm of w] {};
			\node[point] (W') [above=1cm of W] {};
			\node[point] (X) [right=1cm of W] {};
			\node[xpoint] (X') [right=1cm of W'] {};
			\path[->] (W) edge[color=blue] (X);
			\path[->] (X) edge[dashed] (X');
			\path[->] (W) edge (W');
			\path[->] (W') edge[color=blue,dashed] (X');
			\node (two) at ($(W)!0.5!(X')$) {(FC2)};
		\end{tikzpicture}
	\end{center}
	\caption{The Fischer Servi frame conditions from \Cref{fsframe} where blue arrows indicate the modal relation, black arrows the intuitionistic relation, and dotted arrows and hollow points existence claims}
\end{figure}

Frames for $\fofs$ are then obtained by attaching a domain and an equivalence relation to each point in a Fischer Servi frame in the following way.

\begin{defn}
	A \textit{domain system} $\D$ on a Fischer Servi frame $(W,\I,\M)$ is a pair $\D=(\DW,\DE)$ such that the following hold:
	\begin{enumerate}[(i)]
		\item $\DW$ assigns to each $w\in W$ a set $\DW(w)$ such that the following hold:
		\begin{itemize}
			\item If $w\I w'$, $\DW(w)\subseteq \DW(w')$;
			\item If $w\M v$, $\DW(w)\subseteq \DW(v)$.
		\end{itemize}
		\item $\DE$ assigns to each $w\in W$ an equivalence relation $\DE[w]$ such that if $a_1\DE[w] a_2$ and $w\I w'$, then $a_1\DE[w'] a_2$.
	\end{enumerate}
\end{defn}

\begin{defn}
	An \textit{$\fofs$ frame} is a 4-tuple $(W, \I, \M, \D)$ where $(W,\I,\M)$ is a Fischer Servi frame on which $\D$ is a domain system.
\end{defn}

We can immediately establish the convenient fact that certain restrictions of $\fofs$ frames are again $\fofs$ frames.

\begin{lem}
	\label{frres}
	Let $F=(W, \I, \M, \D)$ be an $\fofs$ frame. Suppose that $X\subseteq W$ is closed under $\I$ and $\M$. Then $F|_X=(X,\Ires,\Mres,\D|_X)$ is a frame.
\end{lem}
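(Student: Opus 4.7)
The strategy is to verify each condition in the definition of an $\fofs$ frame for the restricted structure $F|_X$, with the two Fischer Servi compatibility conditions being the only substantive points.

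First I would check that $(X,\Ires,\Mres)$ is a Fischer Servi frame. That $\Ires$ is a partial order on $X$ is immediate from the fact that $\I$ is a partial order on $W$. For (FC1), suppose $w,v\in X$ with $w\Ires z\Mres v$ for some $z\in X$. Applying (FC1) on the ambient frame $F$ yields some $z'\in W$ with $w\M z'$ and $z'\I v$. The key point is that $X$ is closed under $\M$, so from $w\in X$ we conclude $z'\in X$, and thus $w\Mres z'\Ires v$ in $F|_X$. The argument for (FC2) is parallel: if $z\M w$ and $z\I v$ with $w,v,z\in X$, then (FC2) on $F$ gives $z'\in W$ with $w\I z'$ and $v\M z'$; since $v\in X$ and $X$ is closed under $\M$, $z'\in X$, so $w\Ires z'$ and $v\Mres z'$.

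Next I would verify that $\D|_X=(\DW|_X,\DE|_X)$ is a domain system on $(X,\Ires,\Mres)$. The monotonicity of the map $\DW|_X$ under $\Ires$ and $\Mres$ is inherited directly from the corresponding properties of $\DW$ on $W$, since $\Ires$ and $\Mres$ are subrelations of $\I$ and $\M$. Likewise, each $\DE[w]$ for $w\in X$ is still an equivalence relation, and the monotonicity clause for $\DE$ under $\Ires$ is an immediate restriction of the corresponding condition on $F$.

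The main (and only mildly nontrivial) obstacle is ensuring that the witnesses produced by (FC1) and (FC2) in $F$ lie in $X$. This is precisely where the assumption that $X$ is closed under both $\I$ and $\M$ is used: (FC1) needs closure under $\M$ at points of $X$, and (FC2) similarly needs closure under $\M$. Every other clause reduces to restriction of a property that holds on all of $F$, so no additional hypotheses are required.
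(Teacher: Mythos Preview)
Your overall strategy matches the paper's: verify the Fischer Servi conditions and the domain-system clauses directly, using closure of $X$ to guarantee that the witnesses produced by the ambient frame lie in $X$. Your treatment of (FC2) and of the domain system agrees with the paper.

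The one discrepancy is in your reading of (FC1). You take (FC1) to say that from $w\I z$ and $z\M v$ one obtains $z'$ with $w\M z'$ and $z'\I v$, and accordingly you invoke closure under $\M$. That is the natural reading of $(\I\circ\M)\subseteq(\M\circ\I)$ under the ``apply-left-factor-first'' convention for relational composition, but it is not the paper's convention: as the accompanying figure and the $\square$-clause of the semantics make clear, the paper composes relations in function-composition order. Under the paper's reading, (FC1) asserts that from $w\M v$ and $v\I v'$ one obtains $w'$ with $w\I w'$ and $w'\M v'$. The witness $w'$ is then an $\I$-successor of $w\in X$, so it is closure under $\I$, not under $\M$, that places $w'$ in $X$. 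With this correction your argument is exactly the paper's.
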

\begin{proof}
	First we check that $F|_X=(X,\Ires,\Mres)$ is a Fischer Servi frame. To verify (FC1), we assume $w\Mres v$ and $v\Ires v'$. This means that $w\M v$ and $v\I v'$ in $F$. Because $(W, \I, \M)$ is a Fischer Servi frame there must be a $w'\in W$ such that $w\Ires w'$ and $w'\Mres v'$, and since $X$ is closed under $\I$, $w'\in X$, and (FC1) holds. The argument for (FC2) is similar. If $w\Ires w'$ and $w \Mres v$, we in fact have $w\I w'$ and $w\M v$. There is then a $v'$ such that $v\I v'$ and $w'\M v'$. Since $X$ is closed under $\M$ and $w'\in X$, we have $v'\in X$ as well, so (FC2) holds.
	
	It is immediate that each of the conditions on a domain system holds after restriction. (In fact, this does not even depend on the closure assumption on $X$.)
\end{proof}

\subsection{Models}

\begin{defn}
	\label{inter}
	An \textit{interpretation} $\In$ on an $\fofs$ frame $(W, \I, \M, \D)$ is a pair $\In=(\Inn,\Inp)$ such that the following hold (where $\D=(\DW,\DE)$):
	\begin{enumerate}[(i)]
		\item $\Inn$ assigns to each $c\in \sigma^\nu$ a domain element $d$ such that for every $w$, $d\in\DW(w)$.
		\item $\Inp$ assigns to each $w\in W$ a function $\Inp(w):\sigma^\pi\to \pso{D(w)}$ such that:
		\begin{itemize}
			\item If $\sigma^\alpha(P)=n$, then $\Inp(w)(P)\subseteq \ps{D(w)^n}$;
			\item If $w\I w'$, $\Inp(w)(P)\subseteq \Inp(w')(P)$;
			\item If $a_i\DE[w] b_i$ for each $1\leq i \leq n$ and $\bar{a}\in \Inp(w)(P)$, then $\bar{b}\in\Inp(w)(P)$.
		\end{itemize}
	\end{enumerate}
\end{defn}

\begin{defn}
	An \textit{$\fofs$ model} is a 5-tuple $(W,\I,\M,\D, \In)$ where $(W,\I,\M,\D)$ is an $\fofs$ frame on which $\In$ is an interpretation.
\end{defn}
 
We choose to give our first-order semantics via variable assignment functions, which are fairly easy to use given our expanding domain assumptions on domain systems. 
 
\begin{defn}
	If $(W,\I,\M,\D, \In)$ is an $\fofs$ model and $w\in W$, a \textit{$w$-variable assignment} ($w$-VA) is a function $g:V\to \DW(w)$.
\end{defn}
Where $g$ is a VA, we will write $g[x:=a]$ for some $a$ to mean the new VA obtained by changing the output of $g$ on $x$ to $a$.
\begin{defn}
	Given an $\fofs$ model $\model=(W,\I,\M,\D, \In)$ and a $w$-variable assignment $a$, the \textit{denotation} of an $n$-tuple of terms $\bar{t}=(t_1,\ldots,t_n)$ is $\deno{\bar{t}}{\model }{g} = (a_1,\cdots,a_n)\in \DW(w)^n$ where \[a_i=\begin{cases} \Inn(t_i)&t_i\in\sigma^\nu\\ g(t_i)&t_i\in V.\end{cases}\]
\end{defn}

We are now equipped to define truth at a world $w$ relative to a $w$-VA. Note that because of the expanding domain assumptions in play, a $w$-VA is automatically a $w'$-VA whenever $w\I w'$ and a $v$-VA whenever $w\M v$. Further, the truth clauses for the connectives and modalities are exactly those of Fischer Servi's original birelational semantics for $\fs$ \cite{fischerservi2}.

\begin{defn}
If $\model=(W,\I,\M,\D, \In)$ is an $\fofs$ model, we define truth of a formula at a world $w$ relative to a $w$-variable assignment function as follows:
\[
\begin{array}{ l l }
	\model,w\Vdash_g P(\bar{t}) &\text{iff} \quad \deno{\bar{t}}{\model}{g} \in \Inp(w)\\
	\model,w\Vdash_g s\equals t &\text{iff} \quad \deno{s}{\model}{g}\DE[w]\deno{t}{\model}{g}\\
	\model,w\Vdash_g (\phi\land \psi) &\text{iff} \quad \model,w\Vdash_g \phi\text{ and }\model,w\Vdash_g \psi\\
	\model,w\Vdash_g (\phi\lor \psi) &\text{iff} \quad \model,w\Vdash_g \phi\text{ or }\model,w\Vdash_g \psi\\
	\model,w\Vdash_g(\phi\to \psi) &\text{iff} \quad \text{for all }w'\succcurlyeq w,\text{ }\model,w'\Vdash_{g} \phi\text{ or }\model,w'\Vdash_{g} \psi\\
	\model,w\Vdash_g \square \phi &\text{iff} \quad \text{for all } 		v'\in(\M\circ\I)(w),\,\model,v'\Vdash_{g}\phi\\
	\model,w\Vdash_g \lozenge \phi &\text{iff} \quad \text{there exists }v\in{\M}(w)\text{ such that }\model,v\Vdash_{g} \phi\\
	\model,w\Vdash_g \forall x \phi &\text{iff} \quad \text{for all }w'\in{\I}(w)\text{, for all }a\in\DW(w'),\,\model,w'\Vdash_{g[x:= a]}\phi\\
	\model,w\Vdash_g \exists x \phi &\text{iff} \quad \text{there exists }a\in\DW(w)\text{ such that }\model,w\Vdash_{g[x:=a]} \phi\\
	\model,w\nVdash_g\bot.&
\end{array}
\]
\end{defn}

\begin{lem}[Persistence]
	\label{persistence}
	If $\model,w\Vdash_g\phi$ for $\phi\in\mathcal{F}(\sigma)$, then for all $w'\in{\I}(w)$ we have $\model,w'\Vdash_g \phi$.
\end{lem}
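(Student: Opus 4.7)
The plan is to proceed by structural induction on $\phi$, fixing $w \preccurlyeq w'$ throughout. Since persistence is the usual stability-under-monotonicity property, most cases are routine; the genuine content lives in (i) the atomic/equality cases (which use the persistence conditions baked into $\Inp$ and $\DE$), (ii) the $\square$ and $\lozenge$ cases (which draw on the Fischer Servi compatibility conditions), and (iii) the $\exists$ case (which uses the expanding-domain condition on $\DW$). Note that $\deno{t}{\model}{g}$ does not depend on the world, so atomic term denotations may be moved freely between worlds.

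For the atomic base cases I would invoke the interpretation conditions from \Cref{inter}: for $P(\bar{t})$, $\deno{\bar t}{\model}{g} \in \Inp(w)(P) \subseteq \Inp(w')(P)$; for $s \equals t$, $\deno{s}{\model}{g} \DE[w] \deno{t}{\model}{g}$ gives $\deno{s}{\model}{g} \DE[w'] \deno{t}{\model}{g}$ by the persistence clause of the domain system; the case $\bot$ is vacuous. The cases for $\land$, $\lor$, $\forall$ are immediate from the inductive hypothesis together with transitivity of $\preccurlyeq$; the $\to$ case similarly follows since any $w'' \succcurlyeq w'$ satisfies $w'' \succcurlyeq w$ by transitivity. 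The $\exists$ case requires a little more: given a witness $a \in \DW(w)$, the expanding-domain condition $\DW(w) \subseteq \DW(w')$ ensures $a \in \DW(w')$, after which the inductive hypothesis delivers $\model, w' \Vdash_{g[x := a]} \phi$.

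The two modal cases are where the Fischer Servi compatibility conditions enter. For $\square \phi$, suppose $\model, w \Vdash_g \square \phi$ and let $v' \in (\M \circ \preccurlyeq)(w')$, i.e.\ $w' \preccurlyeq w''$ and $w'' \M v'$. Transitivity of $\preccurlyeq$ gives $w \preccurlyeq w''$, so $v' \in (\M \circ \preccurlyeq)(w)$ and the truth of $\square \phi$ at $w$ immediately yields $\model, v' \Vdash_g \phi$; no appeal to (FC1) or (FC2) is needed here. For $\lozenge \phi$, suppose $w \M v$ with $\model, v \Vdash_g \phi$. Since $w \preccurlyeq w'$ and $w \M v$, (FC2) supplies $v'$ with $w' \M v'$ and $v \preccurlyeq v'$. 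The inductive hypothesis applied at $v$ with $v \preccurlyeq v'$ then gives $\model, v' \Vdash_g \phi$, whence $\model, w' \Vdash_g \lozenge \phi$.

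The only place where care is needed is the $\lozenge$ case, since it is the one step that genuinely invokes a Fischer Servi compatibility condition; everything else reduces to transitivity of $\preccurlyeq$ or monotonicity conditions on the domain system and the interpretation. I would emphasize this in the write-up and give the $\lozenge$ case in full, briefly sketching the other clauses.
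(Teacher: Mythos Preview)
Your argument is correct and follows exactly the approach the paper indicates, namely induction on formula complexity; the paper's proof is simply the one-line ``Induction on formula complexity,'' and your write-up is a faithful unpacking of that. Your handling of the $\lozenge$ case via (FC2) and the $\square$ case via transitivity of $\preccurlyeq$ alone is exactly right.
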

\begin{proof}
	Induction on formula complexity.
\end{proof}

Unsurprisingly, the choice of assignment is irrelevant for sentences.
\begin{lem}
	Let $\model=(W,\I,\M,\D, \In)$ be an $\fofs$ model and $\phi\in\lang$. If $w\in W$ and $g$ and $g'$ are $w$-VAs, then $\model, w\Vdash_g\phi$ if and only if $\model, w\Vdash_{g'}\phi$.
\end{lem}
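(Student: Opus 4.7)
The plan is to prove a stronger statement by induction on formula complexity: for every formula $\phi\in\mathcal{F}(\sigma)$ (not merely every sentence), every world $w\in W$, and any two $w$-VAs $g,g'$ that agree on the free variables of $\phi$, we have $\model,w\Vdash_g\phi$ if and only if $\model,w\Vdash_{g'}\phi$. The lemma as stated then follows immediately, since a sentence has no free variables and so any two $w$-VAs vacuously agree on them.

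First I would handle the atomic cases. For $P(\bar t)$ and $s\equals t$, the truth value depends only on $\deno{\bar t}{\model}{g}$ (respectively $\deno{s}{\model}{g}$ and $\deno{t}{\model}{g}$), and inspection of the denotation clause shows that this depends on $g$ only through its values on the variables among $t_1,\ldots,t_n$ (respectively $s,t$), which are precisely the free variables of the formula. The case $\bot$ is trivial. The Boolean cases $\phi\land\psi$ and $\phi\lor\psi$ are immediate from the inductive hypothesis, as the free variables of the conjunction or disjunction contain those of each conjunct or disjunct.

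For the implication $\phi\to\psi$ and the modal clauses $\square\phi$ and $\lozenge\phi$, I would invoke the expanding domain assumptions built into the definition of a domain system: if $w\I w'$ then $\DW(w)\subseteq\DW(w')$, and similarly if $w\M v$ then $\DW(w)\subseteq\DW(v)$. Consequently any $w$-VA is automatically a $w'$-VA for every $w'\in\I(w)$ and a $v$-VA for every $v\in\M(w)$ or $v\in(\M\circ\I)(w)$. Thus when the truth clause shifts attention to an accessible world, $g$ and $g'$ remain valid VAs there and continue to agree on the free variables of the subformula, so the inductive hypothesis applies directly.

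The quantifier cases are where one must be slightly careful with free variables. For $\forall x\phi$, note $\mathrm{FV}(\forall x\phi)=\mathrm{FV}(\phi)\setminus\{x\}$. Given $w'\in\I(w)$ and any $a\in\DW(w')$, the modified assignments $g[x:=a]$ and $g'[x:=a]$ are $w'$-VAs agreeing on all of $\mathrm{FV}(\phi)$ (they agree on $\mathrm{FV}(\phi)\setminus\{x\}$ by hypothesis, and both send $x$ to $a$), so the inductive hypothesis gives $\model,w'\Vdash_{g[x:=a]}\phi$ iff $\model,w'\Vdash_{g'[x:=a]}\phi$; quantifying over $w'$ and $a$ yields the desired equivalence. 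The case $\exists x\phi$ is analogous. No step presents a real obstacle; the only point requiring attention is the bookkeeping on free variables in the quantifier clauses and the observation that expanding domains keep $w$-VAs valid at accessible worlds.
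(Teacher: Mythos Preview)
Your proposal is correct and takes essentially the same approach as the paper, whose proof is the single line ``Induction on formula complexity.'' You have simply spelled out the standard strengthening of the induction hypothesis (to arbitrary formulas with assignments agreeing on free variables) that is needed to push the induction through the quantifier clauses.
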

\begin{proof}
	Induction on formula complexity.
\end{proof}

We write $\model,w\Vdash\phi$ if $\model,w\Vdash_g\phi$ for each (or equivalently, some) $w$-VA $g$. For a set of sentences $\Gamma$, we write $\model,w\Vdash\Gamma$ if $\model,w\Vdash\phi$ for all $\phi\in \Gamma$. We write $\Gamma\Vdash\phi$ if for all model--world pairs, $\model,w\Vdash \Gamma$ entails $\model,w\Vdash\phi$.
\begin{thm}[Soundness]
	\label{sound}
	If $\Gamma\vdash \phi$, then $\Gamma\Vdash\phi$.
\end{thm}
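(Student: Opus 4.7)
The plan is to proceed by induction on the length of a derivation witnessing $\Gamma \vdash \phi$. I will fix a model $\model = (W,\I,\M,\D,\In)$ and a world $w \in W$ with $\model, w \Vdash \Gamma$, and show $\model, w \Vdash \phi$. The base case amounts to checking that every axiom schema in \Cref{hilbertsystem} is valid (i.e., true at every world of every model under every variable assignment), and the inductive step amounts to checking that $\mopo$, $\gen$, and $\reg$ preserve validity. Most axioms will be routine unwindings of the truth clauses: $\intuit$ reduces to soundness of $\ipc$ using the partial-order semantics and \Cref{persistence}; $\kba$, $\kbb$, $\kda$, $\kdb$ are direct from the clauses for $\square$ and $\lozenge$ (noting $\kdb$ uses the $\nVdash \bot$ clause); $\Univ$ and $\Exist$ follow from the semantics of the quantifiers together with the fact that $\Inn(c) \in \DW(w)$ for every $w$; and $\forallant$, $\forallcon$ combine persistence with the expanding-domain condition on $\DW$.

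The main obstacle will be the Fischer Servi axioms $\fsone$ and $\fstwo$, which crucially exploit (FC1) and (FC2). For $\fsone$, I will suppose $\model, w \Vdash \lozenge\phi \to \square\psi$ and take an arbitrary $v' \in (\M \circ \I)(w)$; picking a witness $w'$ with $w \I w' \M v'$, I must show $\model, v' \Vdash \phi \to \psi$. If $v'' \succcurlyeq v'$ and $\model, v'' \Vdash \phi$, then from $w' \M v' \I v''$ I apply (FC2) to extract $w''$ with $w' \I w''$ and $w'' \M v''$, giving $\model, w'' \Vdash \lozenge\phi$; by persistence of $\lozenge\phi \to \square\psi$ from $w$ to $w''$ and then to the $\M \circ \I$-successor $v''$, we conclude $\model, v'' \Vdash \psi$. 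The argument for $\fstwo$ is dual and uses (FC1) in place of (FC2).

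For the identity axioms, $\idref$ is immediate from reflexivity of $\asymp^w$. The restriction of $\idsub$ to modal-free $\phi$ will be handled by a subsidiary induction on modal-free formula complexity, using the congruence condition on $\Inp(w)$ with respect to $\asymp^w$ (see \Cref{inter}) and persistence of $\asymp$ along $\I$; the restriction to modal-free formulas is essential, since at an $\M$-successor $v$ the equivalence relation $\asymp^v$ need not identify the same elements as $\asymp^w$. The rules are handled last: $\mopo$ is classical; $\gen$ uses that the constant $c$ replaced by $x$ does not appear in $\phi$, so interpreting $c$ as an arbitrary element of the domain at an arbitrary $\I$-successor world captures the semantics of $\forall x$; and $\reg$ for both $\square$ and $\lozenge$ follows from the monotonicity of their truth clauses in the argument formula.
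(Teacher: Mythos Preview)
Your plan is the standard soundness argument and is correct; the paper's own proof is a one-sentence deferral to the literature (soundness for propositional $\fs$ in \cite{fischerservi} and for first-order intuitionistic logic in \cite{dragalin}), so what you have written is essentially the content of those references rather than a different route.

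One small correction: you have the two frame conditions interchanged. In your $\fsone$ argument, the step ``from $w'\M v'$ and $v'\I v''$ obtain $w''$ with $w'\I w''$ and $w''\M v''$'' is precisely (FC1), not (FC2); dually, the validity of $\fstwo$ rests on (FC2) (it is what lets $\lozenge(\phi\to\psi)$ persist from $w'$ to $w''$ so that a modal successor of $w''$ witnessing $\phi\to\psi$ can be found). The structure of both arguments is otherwise exactly right.
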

\begin{proof}
	Each item to be checked is covered by the soundness theorem for $\fs$ \cite{fischerservi} or the soundness theorem for first-order intuitionistic logic presented as a Hilbert system (see, e.g., \cite{dragalin}).
\end{proof}

\section{The Trace Model Construction}
\label{trace}

\subsection{Technical Groundwork}
For this section, fix some signature $\sigma$. For each pair of naturals $(i,j)$ we fix a countably infinite set of constants $C_{i,j}$ such that if $(i,j)\neq (i',j')$, then $C_{i,j}\cap C_{i',j'}=\varnothing$. We set \[\sigma^\nu_{l,m}= \sigma^\nu \cup \left(\bigcup_{i<l\text{ and }j<m} C_{i,j}\right)\] and $\sigma_{l,m}=(\sigma^\nu_{l,m},\sigma^\pi,\sigma^\alpha)$. For convenience, we abbreviate $\lang[\sigma_{l,m}]$ as $\gridlang{l}{m}$. Note that $\gridlang{0}{0}=\lang$. Crucially, we have the following facts. 
\begin{fact}
	\label{intersection}
	$\sigma^\nu_{i+1,j}\cap \sigma^\nu_{i,j+1}=\sigma^\nu_{i,j}$.
\end{fact}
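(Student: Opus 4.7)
The plan is to unfold both sides using the definitions and then use the pairwise disjointness of the $C_{a,b}$'s to compute the intersection directly. By definition,
\[\sigma^\nu_{i+1,j} = \sigma^\nu \cup \bigcup_{a < i+1,\, b < j} C_{a,b}, \qquad \sigma^\nu_{i,j+1} = \sigma^\nu \cup \bigcup_{a < i,\, b < j+1} C_{a,b}.\]
Clearly $\sigma^\nu$ is contained in the intersection, and likewise every $C_{a,b}$ with $a<i$ and $b<j$ is contained in both unions (since $a<i<i+1$ and $b<j<j+1$), so the inclusion $\sigma^\nu_{i,j}\subseteq \sigma^\nu_{i+1,j}\cap \sigma^\nu_{i,j+1}$ is immediate.

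For the reverse inclusion, I would take $c \in \sigma^\nu_{i+1,j}\cap \sigma^\nu_{i,j+1}$; if $c\in \sigma^\nu$ there is nothing more to check, so assume $c \in C_{a,b}$ for some $(a,b)$ witnessing membership in the first union and $c \in C_{a',b'}$ for some $(a',b')$ witnessing membership in the second. Pairwise disjointness of the $C_{i,j}$ forces $(a,b)=(a',b')$, so the four inequalities $a<i+1$, $b<j$, $a'<i$, $b'<j+1$ collapse to $a<i$ and $b<j$, which places $c$ in $\sigma^\nu_{i,j}$. There is no real obstacle here: the fact is essentially a distributivity computation whose only nontrivial input is the disjointness assumption on the $C_{i,j}$, and the only thing worth being careful about is not accidentally assuming $\sigma^\nu$ is disjoint from the $C_{a,b}$'s (which is not needed, since constants in $\sigma^\nu$ are handled separately at the outset).
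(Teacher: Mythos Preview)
Your verification is correct. The paper states this as a \emph{Fact} without proof, so there is no argument to compare against; your direct unfolding of the definition together with the pairwise disjointness of the $C_{a,b}$'s is exactly the intended (and essentially only) way to see it.
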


\begin{fact}
	\label{notunion}
	$\sigma^\nu_{i+1,j+1}\setminus(\sigma^\nu_{i,j+1}\cup\sigma^\nu_{i+1,j})$ is infinite.
\end{fact}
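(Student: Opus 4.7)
The plan is to unpack the definition of each $\sigma^\nu_{l,m}$ and observe that the block $C_{i,j}$ itself sits inside the difference. Concretely, spelling out the definition gives
\[
\sigma^\nu_{i+1,j+1} = \sigma^\nu \cup \bigcup_{i'\leq i,\ j'\leq j} C_{i',j'}, \quad \sigma^\nu_{i,j+1} = \sigma^\nu \cup \bigcup_{i'< i,\ j'\leq j} C_{i',j'}, \quad \sigma^\nu_{i+1,j} = \sigma^\nu \cup \bigcup_{i'\leq i,\ j'< j} C_{i',j'}.
\]
The pair $(i',j')=(i,j)$ appears in the index set of the first union but in neither of the latter two (each fails one strict inequality). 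So the first step is simply to note that $C_{i,j}\subseteq\sigma^\nu_{i+1,j+1}$.

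Next I would use the pairwise disjointness of the $C_{i',j'}$ (together with the tacit assumption, presupposed by the whole construction, that the fresh blocks $C_{i',j'}$ are disjoint from $\sigma^\nu$) to conclude $C_{i,j}\cap\sigma^\nu_{i,j+1}=\varnothing$ and $C_{i,j}\cap\sigma^\nu_{i+1,j}=\varnothing$. Combining these inclusions yields $C_{i,j}\subseteq\sigma^\nu_{i+1,j+1}\setminus(\sigma^\nu_{i,j+1}\cup\sigma^\nu_{i+1,j})$, and since $C_{i,j}$ was fixed to be countably infinite, the difference is infinite.

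There isn't really an obstacle here; the statement is essentially a bookkeeping lemma whose whole job is to guarantee that the grid of signatures will always leave a supply of fresh constants at the diagonal step of the trace model construction. The only point one has to be slightly careful about is the implicit convention that each $C_{i,j}$ is disjoint from the base $\sigma^\nu$ of the original signature (as well as from the other $C_{i',j'}$), which is what makes the intersection arguments above clean. I would mention this explicitly at the start of the proof (or, alternatively, absorb it into the original construction of the $C_{i,j}$ by replacing them with disjoint copies).
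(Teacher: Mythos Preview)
Your argument is correct and is exactly the intended justification: the paper states this as a \texttt{fact} without proof, relying on precisely the observation you spell out, namely that $C_{i,j}\subseteq\sigma^\nu_{i+1,j+1}\setminus(\sigma^\nu_{i,j+1}\cup\sigma^\nu_{i+1,j})$ by disjointness of the blocks (and from $\sigma^\nu$). Your remark about making the disjointness of each $C_{i,j}$ from $\sigma^\nu$ explicit is well taken; the paper leaves this implicit.
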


Finally, for every pair of naturals $(i,j)$, we choose a distinguished element of $\sigma^\nu_{i,j+1}$, which we call $c^+_{i,j}$. $\gridlang{i}{j}^+=\lang[(\sigma^\nu_{i,j}\cup\{c^+_{i,j}\},\sigma^\pi,\sigma^\alpha)]$. We will say that $\Gamma$ is an $(l,m)$-saturated theory to mean that $\Gamma$ is a saturated theory in the language $\gridlang{l}{m}$. We denote by $\realth{l}{m}$ the set of all $(l,m)$-saturated theories and set $\realthall=\bigcup_{i,j} \realth{i}{j}$. We also define two important binary relations on $\realthall$:
\begin{align*}
	\Gamma\U \Gamma'&\quad\text{iff}\quad \Gamma\subseteq \Gamma'\text{ and for some }l,m\in\nats\text{ we have }\Gamma\in\realth{l}{m}\text{ and }\Gamma'\in\realth{l}{m+1};\\
	\Gamma\R \Delta&\quad\text{iff}\quad\yb{\Gamma}\subseteq \Delta\text{, }\nd{\Gamma}\cap \Delta=\varnothing\text{, and for some }l,m\in\nats\text{ we have }\Gamma\in\realth{l}{m}\text{ and }\Gamma'\in\realth{l+1}{m}.
\end{align*}
Apart from the the restrictions on languages, these relations are defined exactly as the modal and intuitionistic relations are defined for the canonical model of $\fs$. 

The proof of the following lemma is essentially tantamount to verifying that the canonical frame for $\fs$ has (FC2). Though we will require additional argument, this lemma will still be a key ingredient in verifying that the model we construct has (FC2).

\begin{lem}
	\label{fc2}
	Suppose that $\Gamma$, $\Gamma'$, and $\Delta$ are saturated theories such that $\Gamma\R \Delta$ and $\Gamma\U \Gamma'$. Then there exists a saturated theory $\Delta'$ such that $\Gamma'\R\Delta'$ and $\Delta\U \Delta'$.
\end{lem}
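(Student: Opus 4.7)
The plan is to apply \Cref{getreal} to the pair $(\yb{\Gamma'}\cup\Delta,\nd{\Gamma'})$, regarded over the language $\gridlang{l+1}{m+1}$, where the indices are the ones forced by the hypotheses: $\Gamma\in\realth{l}{m}$, $\Gamma'\in\realth{l}{m+1}$, and $\Delta\in\realth{l+1}{m}$. Once I have established consistency, the resulting saturated theory $\Delta'\in\realth{l+1}{m+1}$ will automatically witness both $\Gamma'\R\Delta'$ (from $\yb{\Gamma'}\subseteq\Delta'$ and $\nd{\Gamma'}\cap\Delta'=\varnothing$) and $\Delta\U\Delta'$ (from $\Delta\subseteq\Delta'$).

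For the consistency step, I argue by contradiction. Closure of $\yb{\Gamma'}$ under conjunction, closure of $\nd{\Gamma'}$ under disjunction, and deductive closure of $\Delta$ let me reduce the alleged inconsistency to single sentences $\alpha\in\yb{\Gamma'}$, $\delta\in\Delta$, and $\gamma\in\nd{\Gamma'}$ with $\vdash\alpha\land\delta\to\gamma$. Let $\bar{d}$ enumerate the constants of $\delta$ that lie in $\sigma^\nu_{l+1,m}\setminus\sigma^\nu_{l,m}$; by \Cref{intersection} these are precisely the constants of $\delta$ absent from $\gridlang{l}{m+1}$, so they do not appear in $\alpha$, $\gamma$, or any sentence of $\Gamma'$. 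Reshaping the tautology as $\vdash\delta\to(\alpha\to\gamma)$ and applying \Cref{exsuff} yields $\vdash\exists\bar{y}\,\delta(\bar{y})\to(\alpha\to\gamma)$, where $\delta(\bar{y})$ denotes $\delta$ with $\bar{d}$ replaced by fresh variables $\bar{y}$. Intuitionistic rearrangement followed by $\reg$ produces $\vdash\square\alpha\to\square(\exists\bar{y}\,\delta(\bar{y})\to\gamma)$, and since $\square\alpha\in\Gamma'$ we obtain $\Gamma'\vdash\square(\exists\bar{y}\,\delta(\bar{y})\to\gamma)$. On the other hand, deductive closure of $\Delta$ together with the tautology $\vdash\delta\to\exists\bar{y}\,\delta(\bar{y})$ places $\exists\bar{y}\,\delta(\bar{y})$ in $\Delta$; because this sentence lies in $\gridlang{l}{m}$ (all of $\bar{d}$ having been quantified away), the condition $\nd{\Gamma}\cap\Delta=\varnothing$ forces $\lozenge\exists\bar{y}\,\delta(\bar{y})\in\Gamma\subseteq\Gamma'$. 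Applying \DFref{fstworeverse} with $\phi=\exists\bar{y}\,\delta(\bar{y})$ and $\psi=\gamma$ now delivers $\Gamma'\vdash\lozenge\gamma$, contradicting $\gamma\in\nd{\Gamma'}$.

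To invoke \Cref{getreal} over $\gridlang{l+1}{m+1}$, I must verify that this language contains infinitely many constants fresh to the pair. Every formula in the pair uses constants only in $\sigma^\nu_{l,m+1}\cup\sigma^\nu_{l+1,m}$, and \Cref{notunion} guarantees infinitely many constants in $\sigma^\nu_{l+1,m+1}$ outside that union.

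The main technical obstacle is the two-sided language mismatch: the constants appearing in $\Delta$ but absent from $\Gamma'$'s language (the "new row" constants in $C_{l,j}$ for $j<m$) must be quantified away via \Cref{exsuff} before the deductive content of $\Delta$ can meaningfully interact with $\Gamma'$, and once the resulting universal information has been imported into $\Gamma'$ it must be converted back into $\lozenge$-form---exactly the role of \DFref{fstworeverse}---in order to contradict membership in $\nd{\Gamma'}$.
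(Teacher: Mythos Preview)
Your proof is correct and follows the same overall shape as the paper's---show that $(\Delta\cup\yb{\Gamma'},\nd{\Gamma'})$ is consistent and then apply \Cref{getreal} over $\gridlang{l+1}{m+1}$---but the consistency argument itself is organized differently. The paper quantifies out the ``up'' constants (those in $\sigma^\nu_{l,m+1}\setminus\sigma^\nu_{l,m}$) from the implication $\beta\to\delta$ so that $\forall\bar{x}\qo{(\beta\to\delta)}$ lands in $\gridlang{l}{m}$; it then pushes a diamond through $\Gamma$ and uses \DFref{barcanish} together with $\fstwo$ to derive $\lozenge\delta\in\Gamma'$. You instead quantify out the ``right'' constants (those in $\sigma^\nu_{l+1,m}\setminus\sigma^\nu_{l,m}$) from the $\Delta$-witness $\delta$, obtaining $\exists\bar{y}\,\delta(\bar{y})\in\gridlang{l}{m}$, and then use $\reg$ and \DFref{fstworeverse} to reach $\lozenge\gamma\in\Gamma'$. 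Your route avoids the Barcan-type lemma entirely, which is a small economy; the paper's route keeps the whole implication intact and so needs only one application of $\reg$-style reasoning via $\fstwo$. Both are equally valid resolutions of the two-sided language mismatch.
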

\begin{proof}
	Say that $\Gamma$ is an $(l,m)$-saturated theory. First, we wish to show that $(\Delta\cup \yb{\Gamma'},\nd{\Gamma'})$ is a consistent pair. Suppose not. Then $\Delta\vdash \beta\to\delta$ where $\beta\in\yb{\Gamma'}$ and $\delta\in\nd{\Gamma'}$. By \Cref{gengen} we have $\Delta\vdash\forall \bar{x}\qo{(\beta\to\delta)}$ where we have universally quantified out all constants in $\beta\to\delta$ that are not in $\sigma_{l+1,m}$. By \Cref{intersection}, $\forall \bar{x}\qo{(\beta\to\delta)}$ is in $\gridlang{l}{m}$. Therefore, $\Gamma\vdash \lozenge\forall\bar{x}\qo{(\beta\to\delta)}$. By \DFref{barcanish}, $\Gamma\vdash \forall\bar{x}\lozenge\qo{(\beta\to\delta)}$. This implies that $\Gamma'\vdash\forall\bar{x}\lozenge\qo{(\beta\to\delta)}$. By repeated applications of $\Univ$, $\Gamma'\vdash \lozenge(\beta\to\delta)$. But then by $\fstwo$, $\Gamma'\vdash \square\beta\to\lozenge \delta$. Since $\Gamma'$ is closed under deduction, we have $\lozenge\delta\in \Gamma'$, which is a contradiction. We obtain $\Delta'$ by applying \Cref{getreal} to the pair where $\sigma'=\sigma_{l+1,m+1}$.
\end{proof}

\begin{center}
	\begin{figure}[h]
		\begin{tikzpicture}
			\node (gamma) at (0,0) {$\Gamma$};
			\node (Gamma) at (0,1.5) {$\Gamma'$};
			\node (delta) at (1.5,0) {$\Delta$};
			\node (Delta) at (1.5,1.5) {$\Delta'$};
			\draw[-{Square[open, scale=1.4]}] (gamma) -- (delta);
			\draw[-{Arc Barb[reversed, scale=1.4]}] (gamma) -- (Gamma);
			\draw[-{Square[open, scale=1.4]}] (Gamma) -- (Delta);
			\draw[-{Arc Barb[reversed, scale=1.4]}] (delta) -- (Delta);
			\draw[dashed] (.8,.8) rectangle (2,2);
		\end{tikzpicture}
		\caption{A diagram of \Cref{fc2}}
	\end{figure}
\end{center}

We will need to borrow a few more ideas from the propositional case. The next definition and lemma are used in verifying the $\square$ case of the truth lemma for the canonical model for $\fs$. 

\begin{defn}
	Let $\phi\in\gridlang{l}{m}$ and $\Gamma\in\realth{l}{m}$. We say that $\Gamma$ is \textit{$\phi$-averse} if $\square(\phi\lor\psi)\in \Gamma$ implies $\lozenge\psi\in\Gamma$.
\end{defn}
\begin{lem}
	\label{averse}
	Suppose that $\Gamma\in\realth{l}{m}$ is $\phi$-averse. Then there exists $\Delta\in\realth{l+1}{m}$ such that $\Gamma\R \Delta$ and $\phi\notin\Delta$.
\end{lem}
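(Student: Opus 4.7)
The strategy is to mimic \Cref{fc2}: identify a consistent pair whose extension via \Cref{getreal} is forced to be the required $\Delta$. Reading off the three conditions $\yb{\Gamma} \subseteq \Delta$, $\nd{\Gamma} \cap \Delta = \varnothing$, and $\phi \notin \Delta$, the obvious candidate is the pair $(\yb{\Gamma}, \nd{\Gamma} \cup \{\phi\})$, to be extended in $\gridlang{l+1}{m}$.

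The substantive step is verifying that this pair is consistent. Suppose for contradiction that it is not. Using the disjunction-closure of $\nd{\Gamma}$, together with the observation that $\bot \in \nd{\Gamma}$ (which follows from $\kdb$ and the consistency of $\Gamma$), I would collapse the offending disjunction into the form $\yb{\Gamma} \vdash \delta \lor \phi$ for some single $\delta \in \nd{\Gamma}$. Compactness combined with the conjunction-closure of $\yb{\Gamma}$ then furnishes a $\beta \in \yb{\Gamma}$ (so $\square \beta \in \Gamma$) with $\vdash \beta \to (\delta \lor \phi)$, and $\reg$ lifts this to $\vdash \square\beta \to \square(\delta \lor \phi)$; deductive closure of $\Gamma$ therefore yields $\square(\delta \lor \phi) \in \Gamma$. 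At exactly this point $\phi$-averseness applied with $\psi := \delta$ produces $\lozenge \delta \in \Gamma$, directly contradicting $\delta \in \nd{\Gamma}$.

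With consistency established, invoking \Cref{getreal} with $\sigma' = \sigma_{l+1,m}$ delivers a saturated theory $\Delta \in \realth{l+1}{m}$ extending $\yb{\Gamma}$ and disjoint from $\nd{\Gamma} \cup \{\phi\}$, which is exactly the desired witness. The supply of fresh constants needed for \Cref{getreal} is guaranteed by \Cref{notunion}, since $\sigma^\nu_{l+1,m}$ contains infinitely many constants not appearing in any formula of $\gridlang{l}{m}$. The only genuinely delicate point is the sleight of hand in the consistency argument: recognizing that an inconsistency must be packaged into the form $\square(\phi \lor \delta) \in \Gamma$ so as to match the definition of $\phi$-averseness. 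Everything else is routine bookkeeping parallel to the propositional canonical-model argument for $\fs$ and to the proof of \Cref{fc2}.
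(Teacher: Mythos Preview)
Your proposal is correct and follows essentially the same approach as the paper: show $(\yb{\Gamma},\nd{\Gamma}\cup\{\phi\})$ is consistent by packaging any putative inconsistency as $\square(\phi\lor\delta)\in\Gamma$ and invoking $\phi$-averseness, then apply \Cref{getreal} in $\sigma_{l+1,m}$. Your version is in fact slightly more careful than the paper's (explicitly handling the case where no disjunct lies in $\nd{\Gamma}$ via $\bot\in\nd{\Gamma}$); the only nit is that the fresh-constant supply comes directly from the definition of $\sigma^\nu_{l+1,m}$ rather than from \Cref{notunion}, which concerns $\sigma^\nu_{i+1,j+1}$.
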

\begin{proof}
	We want to check that the pair $(\yb{\Gamma},\nd{\Gamma}\cup\{\phi\})$ is consistent. If it were not, we would have $\vdash \beta\to(\delta\lor\phi)$ where $\beta\in\yb{\Gamma}$ and $\delta\in\nd{\Delta}$. This implies that $\square(\delta\lor\phi)\in\Gamma$. Because $\Gamma$ is $\phi$-averse, $\lozenge\delta\in \Gamma$, which is a contradiction. We obtain $\Delta$ by applying \Cref{getreal} to the pair where $\sigma'=\sigma_{l+1,m}$.
\end{proof}

\subsection{Modal Chain Extensions}
In the trace model, a single point can have a significant amount of internal structure. Because of this, the extension results that we have proved so far, though useful, are far too weak to actually allow us to construct new points. We begin by defining a stronger sort of extension of a consistent pair.

\begin{defn}
	We say that $\Gamma'$ is a \textit{diamond extension} of a consistent pair $(\Gamma,\Omega)$ if it is an extension of the pair with the property that $\delta\in\nd{\Gamma'}$ implies that $(\Gamma'\cup\{\lozenge\delta\},\Omega)$ is inconsistent.
\end{defn}

Fortunately it is virtually no extra work to construct a diamond extension instead of an ordinary one.

\begin{lem}
	\label{diamondextension}
	If $(\Gamma,\Omega)$ is a consistent pair of formulas in $\gridlang{l}{m+1}\cup\gridlang{l+1}{m}^+$, then it has a diamond extension $\Gamma'\in\realth{l+1}{m+1}$.
\end{lem}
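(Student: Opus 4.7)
The plan is to adapt the saturation construction from \Cref{getreal} by adding a third alternating phase that handles diamond sentences. The ambient language will be $\gridlang{l+1}{m+1}$, and by \Cref{notunion} the set $C_{l,m} = \sigma^\nu_{l+1,m+1} \setminus (\sigma^\nu_{l,m+1} \cup \sigma^\nu_{l+1,m})$ is countably infinite and disjoint from the constants appearing in $\Gamma \cup \Omega$; this supplies fresh Henkin witnesses.

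Enumerate all sentences $\delta_0, \delta_1, \delta_2, \ldots$ of $\gridlang{l+1}{m+1}$, alongside the enumerations of disjunctions and existentials used in \Cref{getreal}. Starting from $\Gamma_0 = \Gamma$, construct the $\Gamma_n$ by interleaving three types of stage: a disjunction step, an existential step (drawing fresh witnesses from $C_{l,m}$), and a new diamond step. At a diamond stage we take the least unused index $k$ and set
\[
\Gamma_{n+1} = \begin{cases} \Gamma_n \cup \{\lozenge \delta_k\} & \text{if } (\Gamma_n \cup \{\lozenge \delta_k\}, \Omega) \text{ is consistent,} \\ \Gamma_n & \text{otherwise.}\end{cases}
\]
Let $\Gamma'$ be the deductive closure of $\bigcup_n \Gamma_n$. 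Consistency of each $(\Gamma_n, \Omega)$ is preserved by \Cref{disjunctionstep} and \Cref{existentialstep} for the first two phases, and is built into the definition of the diamond phase. Primeness and Henkinness of $\Gamma'$ then follow exactly as in the proof of \Cref{getreal}.

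It remains to verify the diamond extension property. If $\delta \in \gridlang{l+1}{m+1}$ and $\lozenge \delta \notin \Gamma'$, write $\delta = \delta_k$ and let $n$ be the stage at which $\delta_k$ was considered. The first branch of the diamond step cannot have fired at stage $n$ (otherwise $\lozenge \delta_k$ would lie in $\Gamma'$), so $(\Gamma_n \cup \{\lozenge \delta_k\}, \Omega)$ was inconsistent, and since $\Gamma_n \subseteq \Gamma'$ the pair $(\Gamma' \cup \{\lozenge \delta_k\}, \Omega)$ is inconsistent as well. The only real obstacle is organizational: one must ensure that each of the three interleaved scheduling streams processes every target sentence in finitely many stages, but this is standard bookkeeping, no more delicate than in the proof of \Cref{getreal}.
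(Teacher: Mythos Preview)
Your proposal is correct and essentially mirrors the paper's own proof: both add a third interleaved phase to the construction of \Cref{getreal} that tries to add each $\lozenge\delta$ whenever doing so is consistent with $\Omega$, drawing Henkin witnesses from $C_{l,m}$, and then argue that if $\lozenge\delta\notin\Gamma'$ then the pair must have been inconsistent at the stage $\delta$ was considered. The only cosmetic difference is your scheduling of the diamond phase (you process each index exactly once in order, whereas the paper always takes the least not-yet-proved but consistently addable diamond sentence), and both schemes work equally well.
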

\begin{proof}
	Fix the following enumerations in $\gridlang{l}{m}$ (indices range over $\omega$):
	\begin{align*}
		\delta_i &= \phi^0_i\lor\phi^1_i&\text{for all disjunctive sentences}\\
		\epsilon_i &= \exists x\psi_i(x)&\text{for all existentially quantified sentences}\\
		\zeta_i &=\lozenge\chi_i&\text{for all diamond sentences.}
	\end{align*}
	Let $\Gamma^0=\Gamma$. After constructing $\Gamma^n$, we define $\Gamma^{n+1}$ as follows:
	\begin{itemize}
	\item If $n$ is a multiple of $3$, let $j$ be the least natural such that $\Gamma_n\vdash\delta_j$, but $\Gamma^n\nvdash \phi^0_j$ and $\Gamma^n\nvdash \phi^1_j$. Set \[\Gamma^{n+1} = \begin{cases} 
		\Gamma^n\cup\{\phi_j^0\}&(\Gamma^n\cup\{\phi_j^0\},\Omega)\text{ is consistent}\\
		\Gamma^n\cup\{\phi_j^1\} &\text{else.} \end{cases}\]
	\item If $n$ is one more than a multiple of $3$, let $j$ be the least natural such that $\Gamma^n\vdash\epsilon_j$ but $\Gamma^n\nvdash\psi_j(c)$ for any constant $c$. Choose a constant $c\in C_{l,m}$ such that $c$ does not appear in any of the formulas in $\Gamma^n$ and set $\Gamma^{n+1} =\Gamma_0^n\cup\{\psi(c)\}$.
	\item If $n$ is two more than a multiple of $3$, let $j$ be the least number such that $\Gamma^n\nvdash\zeta_j$ but $(\Gamma^n\cup\{\zeta_j\},\Omega)$ is consistent. We set $\Gamma^{n+1}=\Gamma^n\cup\{\zeta_j\}$.
	\end{itemize}
	Note that $(\Gamma^n,\Omega)$ is consistent for all $n$. The $n=0$ case is by assumption, and if $n$ is a multiple of $3$, $(\Gamma^{n+1},\Omega)$ is consistent by \Cref{disjunctionstep}. If $n$ is one more than a multiple of $3$, $(\Gamma^{n+1},\Omega)$ is consistent by \Cref{existentialstep}. If $n$ is two more than a multiple of $3$, $(\Gamma^{n+1},\Omega)$ is consistent by construction. Setting $\Gamma'$ equal to the deductive closure of $\bigcup_{n<\omega}\Gamma^n$, it should be clear that $\Gamma'$ is a diamond extension of $\Gamma$ disjoint from $\Omega$.
\end{proof}

\begin{defn}
	A \textit{modal chain} is a sequence $\langle \Gamma_0,\ldots, \Gamma_n\rangle$ such that $\Gamma_0\in\realth{0}{m}$ for some $m$ and for each $i<n$, $\Gamma_i\R \Gamma_{i+1}$.
\end{defn}

We will sometimes say \textit{modal $m$-chain} when it is useful to specify that the second coordinate is $m$. We now present an extension result for modal chains. This theorem plays the same roll for us that a Lindenbaum-like extension lemma plays in a canonical model argument. 

\begin{thm}
	\label{getdschain}
	Suppose that $\langle\Gamma_0,\Gamma_1,\cdots,\Gamma_n\rangle$ is a modal $m$-chain and that $(\Gamma_n^*,\Omega_n)$ is a consistent pair in $\gridlang{n}{m}^+$ such that $\Gamma_n\subseteq \Gamma_n^*$. Then there exists a modal $(m+1)$-chain $\langle\Gamma_0',\Gamma_1',\cdots,\Gamma_{n}'\rangle$ such $\Gamma_k\U \Gamma_k'$ for all $0\leq k \leq n$ and $\Gamma_{n}$ is an extension of $(\Gamma_n^*,\Omega_n)$. If moreover $\Omega_n=\{\square\phi\}$ for some $\phi\in \gridlang{n}{m}^+$, $\Gamma_n'$ can be constructed to be $\phi$-averse.
\end{thm}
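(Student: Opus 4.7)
The plan is to prove this by induction on $n$, combining the diamond-extension idea of \Cref{diamondextension} with a downward reflection step reminiscent of \Cref{fc2}.

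For the base case $n=0$, I would adapt the construction of \Cref{diamondextension} to the target language $\gridlang{0}{m+1}$: the pair $(\Gamma_0^*,\Omega_0)$ lies in $\gridlang{0}{m}^+\subseteq\gridlang{0}{m+1}$, and the infinitely many constants in $C_{0,m}\setminus\{c^+_{0,m}\}$ provide the fresh Henkin witnesses. This yields a diamond extension $\Gamma_0'\in\realth{0}{m+1}$ of $(\Gamma_0^*,\Omega_0)$ with $\Gamma_0\subseteq\Gamma_0^*\subseteq\Gamma_0'$, hence $\Gamma_0\U\Gamma_0'$.

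For the inductive step, given $\langle\Gamma_0,\ldots,\Gamma_n\rangle$ and $(\Gamma_n^*,\Omega_n)$, I would first apply the inductive hypothesis to $\langle\Gamma_0,\ldots,\Gamma_{n-1}\rangle$ using a consistent pair $(\Gamma_{n-1},\Omega_{n-1}^{**})$ in $\gridlang{n-1}{m}^+$ designed to reflect the constraints from $(\Gamma_n^*,\Omega_n)$ down one level. A natural candidate puts $\square\forall\bar{x}\qo{(\alpha\to\beta)}$ into $\Omega_{n-1}^{**}$ for each finite conjunction $\alpha$ of $\Gamma_n^*$ and finite disjunction $\beta$ of $\Omega_n$ (with $\bar{c}$ the constants not in $\gridlang{n-1}{m}^+$); consistency then follows via \DFref{necdis}, $\kba$, $\Univ$, and $\Gamma_{n-1}\R\Gamma_n$, since an assumed inconsistency $\Gamma_{n-1}\vdash\biglor_i\square\forall\bar{x}_i\qo{(\alpha_i\to\beta_i)}$ propagates through $\biglor_i\forall\bar{x}_i\qo{(\alpha_i\to\beta_i)}\in\yb{\Gamma_{n-1}}\subseteq\Gamma_n^*$ into an inconsistency of $(\Gamma_n^*,\Omega_n)$ after $\Univ$ and intuitionistic reasoning on $(\alpha_i\to\beta_i)$. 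Once the inductive hypothesis yields $\Gamma_0',\ldots,\Gamma_{n-1}'$, I would apply \Cref{diamondextension} to the pair $(\yb{\Gamma_{n-1}'}\cup\Gamma_n^*,\nd{\Gamma_{n-1}'}\cup\Omega_n)$ in $\gridlang{n-1}{m+1}\cup\gridlang{n}{m}^+$ to produce $\Gamma_n'\in\realth{n}{m+1}$.

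The main obstacle is verifying consistency of this final combined pair. A putative inconsistency $\vdash\alpha\land\psi\to\chi\lor\beta$ can be transformed---by $\gen$, $\forallcon$, necessitation, and regularity of $\square$---into $\square\forall\bar{x}(\qo{\alpha}(\bar{x})\to\chi\lor\qo{\beta}(\bar{x}))\in\Gamma_{n-1}'$, from which one must derive a contradiction with $\Omega_{n-1}^{**}$. Because intuitionistically $\forall\bar{x}(A(\bar{x})\to B\lor C(\bar{x}))$ does not in general decompose as $B\lor\forall\bar{x}(A(\bar{x})\to C(\bar{x}))$, stripping off the disjunct $\chi\in\nd{\Gamma_{n-1}'}$ is nontrivial; the resolution will likely involve enriching $\Omega_{n-1}^{**}$ (for instance with terms of the form $\square\forall\bar{x}(\qo{\alpha}(\bar{x})\to\exists\bar{y}\,\phi(\bar{y})\lor\qo{\beta}(\bar{x}))$ as $\phi$ ranges over $\gridlang{n-1}{m}^+$) or leveraging further structural properties of $\Gamma_{n-1}'$ via iterated use of $\fsone$, $\fstwo$, and \Cref{theorems}. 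Finally, the averse strengthening for $\Omega_n=\{\square\phi\}$ is automatic: if $\square(\phi\lor\psi)\in\Gamma_n'$ and $\lozenge\psi\notin\Gamma_n'$, the diamond-extension property gives $\Gamma_n'\cup\{\lozenge\psi\}\vdash\square\phi$, so $\fsone$ yields $\square(\psi\to\phi)\in\Gamma_n'$; then \Cref{rubox} applied to $\square(\phi\lor\psi)$ and $\square(\psi\to\phi)$ together with the intuitionistic tautology $(\phi\lor\psi)\land(\psi\to\phi)\to\phi$ produces $\square\phi\in\Gamma_n'$, contradicting $\Omega_n\cap\Gamma_n'=\varnothing$.
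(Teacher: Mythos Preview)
Your inductive scheme is a natural idea, but the gap you yourself flag is fatal as the proposal stands. The theorem's conclusion records only that $\Gamma_{n-1}'$ extends some consistent pair, and this is not enough information to push the construction one step further: elements of $\nd{\Gamma_{n-1}'}$ may involve constants from $C_{n-2,m}$ that are absent from $\gridlang{n}{m}^+$, so there is no way to compare them with $\Omega_n$ directly. The paper avoids this by building all the $\Gamma_k'$ forward from $k=0$ while maintaining an auxiliary invariant
\[
(\ast)\qquad \delta\in\nd{\Gamma_k'}\ \Longrightarrow\ \square\bigl(\delta\to\textstyle\biglor_i\beta_i\bigr)\in\Gamma_k'\ \text{for some }\beta_i\in\yb{\Omega_k},
\]
where the sets $\Omega_k$ are defined \emph{backwards} from $\Omega_n$ by $\Omega_k=\{\square\beta\in\gridlang{k}{m}:\Gamma_{k+1}\vdash\beta\to\biglor_i\gamma_i,\ \gamma_i\in\Omega_{k+1}\}$. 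Property~$(\ast)$ is exactly what lets one absorb $\nd{\Gamma_{k}'}$ into $\Omega_{k+1}$ at the next stage, and it is preserved using $\fsone$ together with the diamond-extension property. Your induction hypothesis carries no analogue of $(\ast)$, and your proposed $\Omega_{n-1}^{**}$ (built from $\square\forall\bar x\qo{(\alpha\to\beta)}$) does not yield one either; this is precisely why you run into the non-decomposition of $\forall\bar x(A(\bar x)\to B\lor C(\bar x))$.

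The same omission breaks your averse argument. Since $\Gamma_n'$ is a diamond extension of the pair $\bigl(\yb{\Gamma_{n-1}'}\cup\Gamma_n^*,\ \nd{\Gamma_{n-1}'}\cup\{\square\phi\}\bigr)$, the conclusion from $\psi\in\nd{\Gamma_n'}$ is only $\Gamma_n'\vdash\lozenge\psi\to(\square\phi\lor\delta')$ for some $\delta'\in\nd{\Gamma_{n-1}'}$, not $\Gamma_n'\vdash\lozenge\psi\to\square\phi$. In the paper, $(\ast)$ for $\Gamma_{n-1}'$ converts $\delta'$ into a disjunction of $\beta_i$ with $\Gamma_n^*\vdash\beta_i\to\square\phi$, after which the $\fsone$ step goes through. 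To salvage your approach you would have to strengthen the statement being proved by induction to include a $(\ast)$-type clause---at which point the argument essentially unwinds to the paper's backward-then-forward construction.
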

\begin{proof}
	 Set \[\Omega_{n-1} = \setb{\square\beta\in\gridlang{n-1}{m}}{\Gamma^*_n\vdash\beta\to\biglor_i\gamma_i\text{ for some }\gamma_i\in\Omega_n}\]
	and for $0\leq k< n-1$, set \[\Omega_k=\setb{\square\beta\in\gridlang{k}{m}}{\Gamma_{k+1}\vdash \beta\to\biglor_i\gamma_i\text{ for some }\gamma_i\in\Omega_{k+1}}.\] 
	Our goal is to construct theories $\Gamma'_0\R \Gamma'_1\R\cdots\R\Gamma'_{n}$ such that for each $k$, $\Gamma'_k$ is an extension of $(\Gamma_k,\Omega_k)$ with the following property:
	\begin{center}
		\begin{tabular}{ll}
			 ($*$) & \begin{tabular}{l} If $\delta\in\nd{\Gamma_k'}$ then there exist formulas \\$\beta_i\in\yb{\Omega_k}$ such that $\square(\delta\to\biglor_i \beta_i)\in \Gamma_k'$.\end{tabular}
		\end{tabular}
	\end{center}
	
	First we claim that for each $0\leq k<n$, $(\Gamma_k,\Omega_k)$ is consistent. Suppose that the claim fails for $n-1$. Then $\Gamma_{n-1}\vdash \biglor_i\square \beta_i$ where for each $\beta_i$, $\Gamma_n^*\vdash \beta_i\to\biglor_j \gamma_{i,j}$ for some $\gamma_{i,j}\in\Omega_n$. From \DFref{necdis} we can infer $\Gamma_{n-1}\vdash \square\biglor_i\beta_i$, implying $\Gamma_n\vdash \biglor_i\beta_i$. Since $\Gamma_n$ is prime, for some $i'$, we have $\Gamma_n\vdash \beta_{i'}$. But then $\Gamma^*_n\vdash \beta_{i'}$, so $\Gamma^*_n\vdash\biglor_j\gamma_{i',j}$, which contradicts the consistency of $(\Gamma_n^*,\Omega_n)$. 
	
	Now assume that $(\Gamma_{k+1},\Omega_{k+1})$ is consistent. If $(\Gamma_k,\Omega_k)$ were inconsistent, we would have $\Gamma_k\vdash\biglor_i\square\beta_i$, where $\square \beta_i\in\Omega_k$. This implies that $\Gamma_k\vdash \square\biglor_i\beta_i$, so $\Gamma_{k+1}\vdash \biglor_i\beta_i$. By primeness, we find an $i'$ such that $\Gamma_{k+1}\vdash \beta_{i'}$. From the definition of $\Omega_k$, we see that $\Gamma_{k+1}\vdash \beta_{i'}\to \biglor_j \gamma_{j}$ where $\gamma_j\in\Omega_{k+1}$. But then $\Gamma_{k+1}\vdash\biglor_j \gamma_{j}$, which contradicts the consistency of $(\Gamma_{k+1},\Omega_{k+1})$.
	
	Let $\Gamma'_0$ be a diamond-extension of $(\Gamma_0,\Omega_0)$. To check ($*$), suppose that $\delta\in\nd{\Gamma'_0}$. Then we have $\Gamma_0'\vdash \lozenge\delta\to\biglor_i\square \beta_i$ for some $\square\beta_i\in \Omega_0$. This implies $\Gamma_0'\vdash \lozenge\delta\to\square \biglor_i \beta_i$ By $\fsone$, $\Gamma'_0\vdash \square (\delta\to \biglor_i\beta_i)$.
	
	We construct the rest of the $\Gamma'_k$s recursively. Say that $\Gamma'_k$ has already been constructed, and suppose that the pair $(\Gamma_{k+1}\cup\yb{\Gamma_k'},\Omega_{k+1}\cup\nd{\Gamma_k'})$ is inconsistent. Then
	\begin{align}
		\Gamma_{k+1}\cup\yb{\Gamma_k'}\vdash (\biglor_i\gamma''_i)\lor\delta
	\end{align}
	where each $\gamma''_i\in\Omega_{k+1}$ and $\delta\in \nd{\Gamma'_k}$. By ($*$), $\Gamma'_k\vdash\square( \delta\to\biglor_i\beta'_i)$ for some $\beta_i'\in\yb{\Omega_k}$. Each $\beta_i'$ has the property that $\Gamma_{k+1}\vdash \beta_i'\to\biglor_j \gamma'_{i,j}$ where $\gamma'_{i,j}\in\Omega_{k+1}$. Therefore
	\begin{align}
		\Gamma_{k+1}\cup\yb{\Gamma'_k}\vdash \delta\to\biglor_i\biglor_j\gamma'_{i,j}.
	\end{align}
	Combining (1) and (2), we have	$\Gamma_{k+1}\cup\yb{\Gamma'_k}\vdash (\biglor_i\gamma''_i)\lor(\biglor_i\biglor_j\gamma'_{i,j})$. As each $\gamma''_i$ and $\gamma'_{i,j}$ is in $\Omega_{k+1}$, for cleanliness of notation, we reindex to write $\Gamma_{k+1}\cup\yb{\Gamma'_k}\vdash \biglor_i\gamma_i$ where each $\gamma_i\in \Omega_{k+1}$. 
	Therefore, $\Gamma_{k+1}\vdash\beta\to \biglor_i\gamma_i$ for some $\beta\in\yb{\Gamma'_k}$. We can quantify out all of the constants $\bar{c}=\langle c_1,\ldots,c_p\rangle$ in $\beta$ that are not in $\sigma_{k,m}$ to obtain $\Gamma_{k+1}\vdash \exists\bar{x}\tilde{\beta}\to(\biglor_i\gamma_i)\lor(\biglor_i\gamma'_i)$. This implies that $\square\exists\bar{x}\qo{\beta}\in \Omega_k$. But then $\square\exists\bar{x}\qo{\beta}\notin\Gamma'_k$ while $\square\beta\in\Gamma'_k$. This contradicts the consistency of $\Gamma'_k$ since $\vdash \square\beta\to\square\exists\bar{x}\qo{\beta}$ by regularity.
	We take $\Gamma_{k+1}'$ to be some diamond-extension of $(\Gamma_{k+1},\Omega_{k+1})$. Now we check that $\Gamma_{k+1}$ has ($*$). Suppose $\delta\in\nd{\Gamma'_{k+1}}$. Then $\Gamma'_{k+1}\vdash \lozenge\delta\to (\biglor\gamma''_i)\lor\delta'$ where each $\gamma''_i\in\Omega_{k+1}$ and $\delta'\in\nd{\Gamma_k}$. By ($*$), $\Gamma_k'\vdash \square(\delta'\to\biglor_i \beta'_i)$ where $\beta'_i\in\yb{\Omega_k}$, so $\Gamma'_{k+1}\vdash\delta'\to\biglor\beta_i'$. For each $\beta_i'$, there are $\gamma'_{i,j}\in \Omega_{k+1}$ such that $\Gamma_{k+1}\vdash\beta'_i\to\biglor_j \gamma'_{i,j}$. Therefore, $\Gamma'_{k+1}\vdash \lozenge \delta \to ((\biglor_i \gamma''_i)\lor (\biglor_i\biglor_j \gamma'_{i,j}))$. Again, for convenience, we reindex to obtain $\Gamma'_{k+1}\vdash \lozenge \delta\to\biglor_i\gamma_i$ where each $\gamma_i\in\Omega_{k+1}$. Each $\gamma_i$ can be written as $\square\beta_i$ by the definition of $\Omega_{k+1}$, so we have $\Gamma'_{k+1}\vdash \lozenge \delta\to\biglor_i\square\beta_i$, in turn implying $\Gamma'_{k+1}\vdash \lozenge \delta\to\square\biglor_i\beta_i$. By $\fsone$, $\Gamma'_{k+1}\vdash \square(\delta\to \biglor_i\beta_i)$, as desired.
	
	To construct $\Gamma_n'$, we merely need to check that $(\Gamma^*_n\cup\yb{\Gamma'_{n-1}},\Omega_n\cup\nd{\Gamma'_{n-1}})$ is consistent. The argument will be largely the same as in the construction of the other $\Gamma_k'$s. If it were not consistent, we would have $\Gamma_n^*\cup\yb{\Gamma'_{n-1}}\vdash (\biglor_i \gamma_i'')\lor\delta$ for some $\gamma_i''\in\Omega_n$ and $\delta\in\nd{\Gamma'_{n-1}}$. By ($*$), $\Gamma_{n-1}'\vdash \square(\delta\to\biglor\beta_i')$ for some $\beta_i'\in\yb{\Omega_{n-1}}$, so $\Gamma_{n}^*\cup\yb{\Gamma_{n-1}'}\vdash \biglor_i \gamma_i$ for some $\gamma_i\in\Omega_n$. Then $\Gamma_{n}^*\vdash \beta\to\biglor_i\gamma_i$ for some $\beta\in \yb{\Gamma_{n-1}'}$. Again by \Cref{exsuff}, $\Gamma_{n}^*\vdash \exists \bar{x}\qo{\beta} \to \biglor_i\gamma_i$ where $\bar{c}$ are the constants in $\beta$ not in $\sigma^\nu_{n,m}$ and $\bar{x}$ are fresh variables. This implies that $\square\exists \bar{x}\qo{\beta}\in \Omega_{n-1}$, contradicting that $\square\beta\in \Gamma_{n-1}'$. By \Cref{getreal}, we obtain $\Gamma_n'$.
	
	Finally suppose that $\Omega_n=\{\square\phi\}$ for some $\phi\in \gridlang{n}{m}$. Since we have already checked that $(\Gamma^*_n\cup\yb{\Gamma'_{n-1}},\Omega_n\cup\nd{\Gamma'_{n-1}})$ is consistent, we can apply \Cref{diamondextension} to obtain a diamond extension $\Gamma'_n$. Now we check that $\Gamma_n'$ is $\phi$-averse. Suppose that $\square(\phi\lor\psi)\in\Gamma_n'$. If $\lozenge\psi\notin\Gamma'_n$, then we must have that $\Gamma_n'\vdash \lozenge \psi\to (\square\phi\lor\delta)$ for $\delta\in\nd{\Gamma'_{n-1}}$. By ($*$), $\Gamma'_{n-1}\vdash \square(\delta\to\biglor_i\gamma_i)$ where $\gamma_i\in\yb{\Omega_{n-1}}$. Each $\gamma_i$ then has the property that $\Gamma^*_n\vdash \gamma_i\to\square\phi$. Combining these facts, $\Gamma_n'\vdash \lozenge\psi\to\square\phi$. By $\fsone$, $\Gamma_n'\vdash \square(\psi\to\phi)$, so reasoning under the box affords us $\Gamma_n'\vdash\square\phi$. This a contradiction, so $\lozenge\psi\in\Gamma'_n$, as desired.
\end{proof}

\begin{center}
	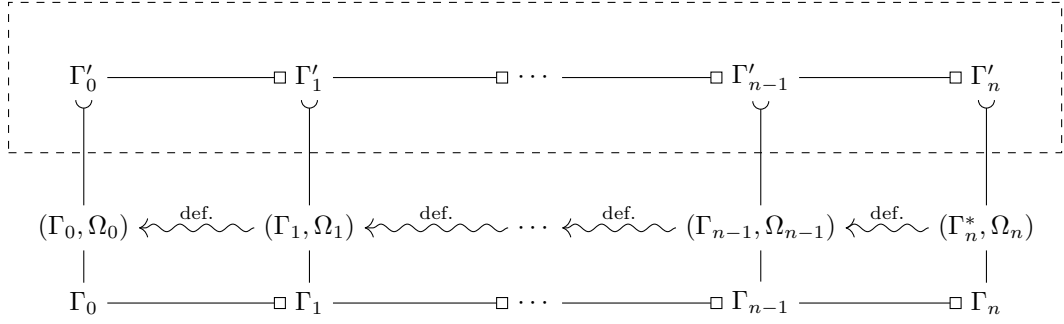
\begin{figure}
		\begin{tikzpicture}
			\node (gamma0) at (0,0) {$\Gamma_0$};
			\node (gamma1) at (3,0) {$\Gamma_1$};
			\node (lell) at (6,0) {$\cdots$};
			\node (gamman-1) at (9,0) {$\Gamma_{n-1}$};
			\node (gamman) at (12,0) {$\Gamma_n$};
			\node (gamma0p) at (0,1) {$(\Gamma_0,\Omega_0)$};
			\node (gamma1p) at (3,1) {$(\Gamma_1,\Omega_1)$};
			\node (mell) at (6,1) {$\cdots$};
			\node (gamman-1p) at (9,1) {$(\Gamma_{n-1},\Omega_{n-1})$};
			\node (gammanp) at (12,1) {$(\Gamma^*_n,\Omega_n)$};
			\node (gamma0') at (0,3) {$\Gamma_0'$};
			\node (gamma1') at (3,3) {$\Gamma_1'$};
			\node (hell) at (6,3) {$\cdots$};
			\node (gamman-1') at (9,3) {$\Gamma_{n-1}'$};
			\node (gamman') at (12,3) {$\Gamma_n'$};
			\draw[-{Square[open, scale=1.4]}] (gamma0) -- (gamma1);
			\draw[-{Square[open, scale=1.4]}] (gamma1) -- (lell);
			\draw[-{Square[open, scale=1.4]}] (lell) -- (gamman-1);
			\draw[-{Square[open, scale=1.4]}] (gamman-1) -- (gamman);
			\draw (gamma0) -- (gamma0p);
			\draw (gamma1) -- (gamma1p);
			\draw (gamman-1) -- (gamman-1p);
			\draw (gamman) -- (gammanp);
			\draw[-{Square[open, scale=1.4]}] (gamma0') -- (gamma1');
			\draw[-{Square[open, scale=1.4]}] (gamma1') -- (hell);
			\draw[-{Square[open, scale=1.4]}] (hell) -- (gamman-1');
			\draw[-{Square[open, scale=1.4]}] (gamman-1') -- (gamman');
			\draw[-{Arc Barb[reversed, scale=1.4]}] (gamma0p) -- (gamma0');
			\draw[-{Arc Barb[reversed, scale=1.4]}] (gamma1p) -- (gamma1');
			\draw[-{Arc Barb[reversed, scale=1.4]}] (gamman-1p) -- (gamman-1');
			\draw[-{Arc Barb[reversed, scale=1.4]}] (gammanp) -- (gamman');
			\draw[-{To[scale=1.4]}, decorate, decoration={snake, amplitude=.5mm}] (gamma1p) -- node [above,midway] {\scriptsize def.} (gamma0p);
			\draw[-{To[scale=1.4]}, decorate, decoration={snake, amplitude=.5mm}] (mell) -- node [above,midway] {\scriptsize def.} (gamma1p);
			\draw[-{To[scale=1.4]}, decorate, decoration={snake, amplitude=.5mm}] (gamman-1p) -- node [above,midway] {\scriptsize def.} (mell);
			\draw[-{To[scale=1.4]}, decorate, decoration={snake, amplitude=.5mm}] (gammanp) -- node [above,midway] {\scriptsize def.} (gamman-1p);
			\draw[dashed] (-1,2) rectangle (13,4);
		\end{tikzpicture}
	\caption{A schematic of the construction in the proof of \Cref{getdschain}}
	\end{figure}
\end{center}

\subsection{Records}
As was previous alluded to, points in the trace model are fairly structured. In particular, a point in the trace model will be a certain kind of function into $\realthall$. We begin by pinning down which sets will serve as an appropriate domain for such a function.

\begin{defn}
	\textit{An $(l,m)$-Ferrers set} $\Lambda$ is a subset of $[0,l]\times[0,m]$ such that the following hold:
	\begin{enumerate}[(i)]
		\item $(0,0)\in \Lambda$.
		\item $(l,m)\in \Lambda$.
		\item For all $i>0$ and all $j$, if $(i,j)\in \Lambda$ then $(i-1,j)\in \Lambda$.
		\item For all $i$ and all $j< m$, if $(i,j)\in \Lambda$ then $(i,j+1)\in \Lambda$.
	\end{enumerate}
\end{defn}

As an aside, Ferrers sets are named for their similarity to Ferrers diagrams, which are combinatorial objects used in the study of integer partitions (see, e.g., \cite[p.~220--221]{harrisetal}).

\begin{figure}[h]
	\begin{center}
		\begin{tikzpicture}[modal]
			\filldraw[color=black,thick,fill=white,fill opacity=.5,rounded corners] (-.25,-.25) -- (-.25,.75) -- (.75,.75) -- (.75,.25) -- (.25,.25) -- (.25,-.25) -- cycle;
			\filldraw[color=black,thick,fill=white,fill opacity=.5,rounded corners] (2.75,-.75) -- (2.75, 1.25) -- (4.25, 1.25) -- (4.25, .75) -- (3.75, .75) -- (3.75, -.25) -- (3.25, -.25) -- (3.25, -.75) -- cycle;
			\filldraw[color=black,thick,fill=white,fill opacity=.5,rounded corners] (6.25,-.5) -- (6.25, 1) -- (7.75,1) -- (7.75, -.5) -- cycle;
			\filldraw[color=black,thick,fill=white,fill opacity=.5,rounded corners] (9.75,-.25) -- (9.75, .75) -- (11.75,.75) -- (11.75, .25) -- (10.75, .25) -- (10.75, -.25) -- cycle;
			\node[point] (00) at (0,0) {};
			\node[point] (01) at (0,.5) {};
			\node[point] (11) at (0.5,0.5) {};
			\node[point] (00*) at (3,-.5) {};
			\node[point] (01*) at (3,0) {};
			\node[point] (02*) at (3,.5) {};
			\node[point] (03*) at (3,1) {};
			\node[point] (11*) at (3.5,0) {};
			\node[point] (12*) at (3.5,.5) {};
			\node[point] (13*) at (3.5,1) {};
			\node[point] (23*) at (4,1) {};
			\node[point] (00**) at (6.5,-.25) {};
			\node[point] (01**) at (6.5,.25) {};
			\node[point] (02**) at (6.5,.75) {};
			\node[point] (10**) at (7,-.25) {};
			\node[point] (11**) at (7,.25) {};
			\node[point] (12**) at (7,.75) {};
			\node[point] (20**) at (7.5,-.25) {};
			\node[point] (21**) at (7.5,.25) {};
			\node[point] (22**) at (7.5,.75) {};
			\node[point] (00***) at (10,0) {};
			\node[point] (01***) at (10,.5) {};
			\node[point] (10***) at (10.5,0) {};
			\node[point] (11***) at (10.5,.5) {};
			\node[point] (21***) at (11,.5) {};
			\node[point] (31***) at (11.5,.5) {};
		\end{tikzpicture}
	\end{center}
	\caption{Examples of Ferrers sets where $(0,0)$ is the bottom left point, the first coordinate tracks horizontally, and the second vertically}
\end{figure}
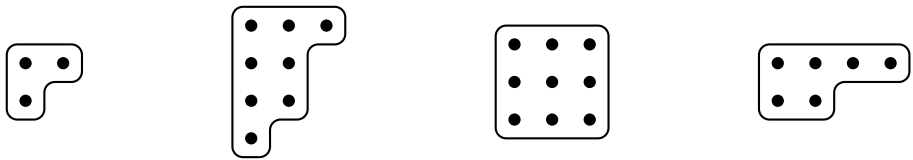

\begin{defn}
	For any naturals $l$ and $m$, an \textit{$(l,m)$-record} $\rho$ is a function $\Lambda\to \realthall$ such that the following hold:
	\begin{enumerate}[(i)]
		\item $\Lambda$ is an $(l,m)$-Ferrers set.
		\item For all $(i,j)\in \Lambda$, $\rho(i,j)\in\realth{i}{j}$.
		\item For all $(i,j)\in \Lambda$, if $(i+1,j)\in F$, then $\rho(i,j)\R \rho(i+1,j)$.\label{recdefmod}
		\item For all $(i,j)\in \Lambda$ with $j<m$, we have $\rho(i,j)\U \rho(i,j+1)$.
	\end{enumerate}
\end{defn}  

Equivalently, part (\ref{recdefmod}) of the definition could be replaced by requiring that for each $j\leq m$, $\rho(-,j)$ is a modal chain. In fact, the proof of the truth lemma will frequently apply the main extension theorem for modal chains in the previous subsection to the top row of a record. We will use the standard pieces of notation $\dom\rho$ for the domain of a record $\rho$ and $\rho|_\Lambda$ for the record $\rho$ restricted to the Ferrers set $\Lambda\subseteq \dom\rho$.

\begin{defn}
	We say that the $(l',m')$-record \textit{end extends} an $(l,m)$-record $\rho$ if $\dom \rho = \dom \rho' \cap([0,l]\times [0,m])$ and $\rho = \rho'|_{\dom \rho}$.
\end{defn}

\begin{lem}
	Suppose that $\rho'$ is an $(l',m')$-record. If $(l,m)\in \dom \rho'$, then the restriction of $\rho'$ to $\dom \rho' \cap([0,l]\times [0,m])$ is an $(l,m)$-record.
\end{lem}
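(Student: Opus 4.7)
The plan is to show first that $\Lambda := \dom\rho' \cap ([0,l]\times[0,m])$ is an $(l,m)$-Ferrers set, and then to observe that the three remaining record conditions are simply inherited from $\rho'$. Let $\Lambda'=\dom\rho'$, which by assumption is an $(l',m')$-Ferrers set. Note that from $(l,m)\in\Lambda'\subseteq [0,l']\times [0,m']$ we get $l\leq l'$ and $m\leq m'$; this observation will be needed to verify the ``grows upward'' condition.

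For the Ferrers set conditions: (i) follows since $(0,0)\in\Lambda'$ and $(0,0)\in[0,l]\times[0,m]$; (ii) is just the hypothesis $(l,m)\in\Lambda'$. For (iii), if $(i,j)\in\Lambda$ with $i>0$, then $(i,j)\in\Lambda'$, so $(i-1,j)\in\Lambda'$ by (iii) for $\Lambda'$, and $(i-1,j)\in[0,l]\times[0,m]$ since $i-1\geq 0$ and $j\leq m$. For (iv), suppose $(i,j)\in\Lambda$ with $j<m$. Since $m\leq m'$, we have $j<m'$ as well, so $(i,j+1)\in\Lambda'$ by (iv) for $\Lambda'$; then $(i,j+1)\in[0,l]\times [0,m]$ since $i\leq l$ and $j+1\leq m$, giving $(i,j+1)\in\Lambda$.

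Define $\rho := \rho'|_{\Lambda}$. Condition (ii) of an $(l,m)$-record is immediate since $\rho(i,j)=\rho'(i,j)\in\realth{i}{j}$ for each $(i,j)\in\Lambda$. For (\ref{recdefmod}), if $(i,j),(i+1,j)\in\Lambda$, then both points lie in $\Lambda'$, so $\rho'(i,j)\R\rho'(i+1,j)$, i.e., $\rho(i,j)\R\rho(i+1,j)$. For (iv), if $(i,j)\in\Lambda$ with $j<m$, the Ferrers verification above gives $(i,j+1)\in\Lambda$, and since $j<m\leq m'$, we have $\rho'(i,j)\U\rho'(i,j+1)$, i.e., $\rho(i,j)\U\rho(i,j+1)$.

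There is really no main obstacle here; the only subtlety worth flagging is the need for $m\leq m'$ (and symmetrically $l\leq l'$), which is why the hypothesis $(l,m)\in\dom\rho'$ is essential rather than allowing arbitrary $l,m$. Everything else is a mechanical inheritance of the Ferrers and record properties through restriction.
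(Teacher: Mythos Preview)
Your proof is correct and complete; the paper's own proof is simply the word ``Immediate,'' so you have supplied exactly the routine verification the authors omitted. There is nothing to compare in terms of approach, since there is really only one thing to do here, and you have done it carefully.
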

\begin{proof}
	Immediate.
\end{proof}

If $\rho$ is an $(l,m)$-record, we will write $\ur{\rho}$ for $\rho(l,m)$, which is just the saturated theory associated to the upper-right corner of the underlying Ferrers set.

\subsection{The Trace Model}
We are primed to define the trace model. As a notational convention, we will use fraktur lettering to indicate that we are looking at the trace model, trace frame, or some component thereof. 

\label{tracemod}
\begin{defn}
	\label{tdef}
	The \textit{trace frame} of $\fofs$ is $\trf=(\WC, \IC, \MC, \DC)$, where:
	\begin{enumerate}[(i)]
		\item $\WC$ is the set of all records;
		\item Where $\rho$ is an $(l,m)$-record and $\rho'$ is an $(l',m')$-record, $\rho\IC \rho'$ just in case $l=l'$ and $\rho'$ end extends $\rho$;
		\item \label{tmodaldef} Where $\rho$ is an $(l,m)$-record and $\rho'$ is an $(l',m')$-record, $\rho\MC \rho'$ just in case $l'=l+1$, $m=m'$, and $\rho'$ end extends $\rho$;
		\item $\DC=(\DWC,\DEC)$ is given by
		\begin{itemize}
			\item $\DWC(\rho)=\sigma_{l,m}^\nu$ for any $(l,m)$-record $\rho$;
			\item $c_1 \DEC[\rho] c_2$ just in case $c_1\equals c_2\in\ur{\rho}$.
		\end{itemize}
	\end{enumerate}
\end{defn}

Intuitively, the difficulty of constructing a Fischer-Servi frame in the first-order case comes from the fact that one has to build worlds in the backwards direction. The condition (FC1) demands that a certain world has a modal predecessor, but since we have expanding signatures (mirroring the expanded domains), all of the constants in the language of a predecessor world already belong to whichever world we started with. Consequently, there are no constants available with which to carry out the Henkinization part of our saturation procedure. To avoid this, we shift from saturated theories to records, which encode their own modal predecessors from the onset. The following proposition shows that this strategy does indeed circumvent the complication.

\begin{prop}
	\label{checkframe}
	The trace frame is an $\fofs$ frame.
\end{prop}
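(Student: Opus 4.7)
The proof plan is to verify in turn that $\IC$ is a partial order, that the Fischer Servi compatibility conditions (FC1) and (FC2) hold, and that $\DC$ is a domain system. The main work lies in (FC2); the other items follow by largely bookkeeping arguments.

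First I would check that $\IC$ is a partial order: reflexivity is immediate from the definition of end extension, transitivity is straightforward from the definition of domain restriction, and antisymmetry comes from the fact that if $\rho\IC\rho'$ and $\rho'\IC\rho$, both records share the first coordinate $l$ and each end extends the other, forcing equal domains and hence equality. For (FC1), suppose $\rho\MC\rho'$ and $\rho'\IC\rho''$, where $\rho$ is $(l,m)$, $\rho'$ is $(l+1,m)$, and $\rho''$ is $(l+1,m'')$. Then $\rho''$ end extends $\rho'$, which end extends $\rho$. Set $\rho''' = \rho''|_{\dom\rho''\cap([0,l]\times[0,m''])}$; since $(l,m'')\in\dom\rho''$ by Ferrers condition (iii), $\rho'''$ is an $(l,m'')$-record, and one checks directly that $\rho\IC\rho'''$ and $\rho'''\MC\rho''$.

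The heart of the argument is (FC2). Suppose $\rho\IC\rho'$ and $\rho\MC\rho''$, where $\rho$ is $(l,m)$, $\rho'$ is $(l,m')$ with $m'\geq m$, and $\rho''$ is $(l+1,m)$. I want to build an $(l+1,m')$-record $\rho'''$ that end extends both. Since $(l,m)\in\dom\rho$ and $\dom\rho'$ is a Ferrers set, we have $(l,j)\in\dom\rho'$ for all $m\leq j\leq m'$, so I can declare $\dom\rho''' = \dom\rho'\cup\dom\rho''\cup\{(l+1,j):m<j\leq m'\}$ and verify directly that this is an $(l+1,m')$-Ferrers set. I define $\rho'''$ to agree with $\rho'$ and $\rho''$ on their domains, and then I fill in the new cells at $(l+1,j)$ for $m<j\leq m'$ by recursion on $j$. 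For the base case $j=m+1$, the saturated theories $\rho'(l,m)=\rho(l,m)$, $\rho'(l,m+1)$, and $\rho''(l+1,m)$ satisfy the hypotheses of \Cref{fc2}, yielding a $\Delta'$ with $\rho'(l,m+1)\R\Delta'$ and $\rho''(l+1,m)\U\Delta'$; set $\rho'''(l+1,m+1)=\Delta'$. The recursive step is identical, applying \Cref{fc2} to $\rho'(l,j-1)$, $\rho'(l,j)$, and $\rho'''(l+1,j-1)$. This is the main obstacle, but all the work has been done in \Cref{fc2}, so iteration suffices.

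Finally I would verify the domain system conditions. Expanding domains under $\IC$ and $\MC$ are immediate from $\sigma^\nu_{l,m}\subseteq\sigma^\nu_{l,m'}$ and $\sigma^\nu_{l,m}\subseteq\sigma^\nu_{l+1,m}$. That $\DEC[\rho]$ is an equivalence relation follows from $\idref$ (reflexivity), \DFref{eucid} applied with $c_3=c_1$ (symmetry), and another application of \DFref{eucid} combined with symmetry (transitivity). Persistence under $\IC$ reduces to the observation that if $\rho\IC\rho'$, then the top-row saturated theories are linked by a chain of $\U$-steps via Ferrers condition (iv), so $\ur{\rho}\subseteq\ur{\rho'}$, making $c_1\equals c_2\in\ur{\rho}$ imply $c_1\equals c_2\in\ur{\rho'}$.
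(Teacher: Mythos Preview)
Your proposal is correct and follows essentially the same approach as the paper. Both arguments handle (FC1) by restricting the larger record, and both handle (FC2) by iterating \Cref{fc2} along the new column; the paper phrases this as an induction on $m'-m$ building intermediate records, while you organize it as a single recursive fill-in of the cells $(l+1,j)$ for $m<j\le m'$, but the underlying mechanism is identical. Your write-up is in fact slightly more thorough than the paper's, since you also spell out why $\DC$ is a domain system (equivalence via $\idref$ and \DFref{eucid}, persistence via $\ur{\rho}\subseteq\ur{\rho'}$), which the paper leaves implicit.
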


\begin{proof}
	First we verify that $(\WC,\IC,\MC)$ is a Fischer Servi frame. It is clear that $\IC$ is a partial order. To check (FC1), suppose that $\rho\MC \tau$ and $\tau\IC \tau'$. Say that $\rho$ is an $(l,m)$-record and that $\tau'$ is an $(l+1,m')$-record. Setting $\rho'=\tau'|_{l,m'}$, we clearly have $\rho\IC \rho'$ and $\rho'\IC \tau'$. For (FC2), suppose that $\rho\IC \rho'$ and $\rho\MC \tau$ where $\rho\in\realth{l}{m}$. We prove that there exists $\tau'$ with $\tau\IC\tau'$ and $\rho'\MC\tau'$ by induction on $m'-m$, where $\rho'\in \realth{l}{m'}$. If $m'-m=1$, then we have $\ur{\rho}\U\ur{\rho'}$ and $\ur{\rho}\R\ur{\tau}$. We apply \Cref{fc2} to obtain an $(l+1,m')$-saturated theory $\Gamma$ such that $\ur{\tau}\U\Gamma$ and $\ur{\rho'}\R\Gamma$. Then, we define a record $\tau':\dom \tau\cup ([0,l+1]\times\{m'\})\to\realthall$ by
	\[\tau'(i,j)=\begin{cases} \tau(i,j)&j\leq m \\ 
		\rho'(i,j)&i\leq l\text{ and }j=m'\\
		\Gamma &\text{else.}\end{cases}\]
	Now suppose that $m'-m>1$. Consider the record $\rho''=\rho_{l,m'-1}$. We have $\rho\IC\rho''$ and $\rho\MC\tau$, so by induction we obtain an $(l+1,m'-1)$-record $\tau''$ such that $\tau\IC\tau''$ ad $\rho''\MC \tau ''$. Since we also have $\rho''\IC\rho'$, we are essentially back in the base case, so we apply an identical argument to obtain an $(l+1,m')$-record $\tau'$ such that $\tau''\IC\tau'$ and $\rho'\MC\tau'$. By the transitivity of $\IC$, $\tau\IC\tau'$, so we are done. 
\end{proof}

In the proof of the truth lemma, we will at times need to shift to an expanded language, which then also forces us to only look at records with sufficiently large indices. Accordingly, we will have to make use of models built on restrictions of the trace frame.

\begin{defn}
	The \textit{$(l,m)$-partial trace frame} $\trfp{l}{m}$ of $\fofs$ is the subframe of $\mathcal{T}$ where the underlying set is restricted to the set of $(l',m')$-records where $l'\geq l$ and $m'\geq m$.
\end{defn}

Note that the $(0,0)$-partial trace frame is just the trace frame. Luckily, it is quite easy to see that these restricted frames are still in the correct class.

\begin{prop}
	For any $l,m\in\nats$, $\trfp{l}{m}$ is an $\fofs$ frame.
\end{prop}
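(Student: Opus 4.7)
The plan is to invoke \Cref{frres}, which reduces the verification to checking that the underlying set of $\trfp{l}{m}$ is closed under $\IC$ and $\MC$ within the trace frame $\trf$ (which has already been shown to be an $\fofs$ frame in \Cref{checkframe}). So the whole proof will be a short closure check together with the observation that the induced domain system on the restricted set is the one required.

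First I would unpack the definitions. The underlying set of $\trfp{l}{m}$ is $X=\{\rho\in\WC : \rho\text{ is an }(l',m')\text{-record with }l'\geq l\text{ and }m'\geq m\}$. Suppose $\rho\in X$ is an $(l',m')$-record. If $\rho\IC\rho''$, then by clause (ii) of \Cref{tdef}, $\rho''$ is an $(l',m'')$-record for some $m''\geq m'$; hence $l'\geq l$ and $m''\geq m'\geq m$, so $\rho''\in X$. If $\rho\MC\rho''$, then by clause (\ref{tmodaldef}) of \Cref{tdef}, $\rho''$ is an $(l'+1,m')$-record, so $l'+1\geq l+1\geq l$ and $m'\geq m$, giving $\rho''\in X$.

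Having checked this closure condition, \Cref{frres} yields that the restriction $(X,\IC|_X,\MC|_X,\DC|_X)$ is an $\fofs$ frame. The only remaining thing to verify is that this restriction coincides with $\trfp{l}{m}$ as defined, i.e., that the restricted domain system $\DC|_X$ is indeed the one inherited from $\trf$ on $\trfp{l}{m}$. This is immediate from the fact that $\DWC$ and $\DEC$ in \Cref{tdef} are given pointwise in terms of $\ur{\rho}$ and thus are unaffected by restricting the underlying set.

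There is no real obstacle here, since all the substantive work was done in \Cref{checkframe} and \Cref{frres}; the proposition is essentially a bookkeeping corollary. I would simply write: ``The set $X$ of records $\rho$ with $l'\geq l$ and $m'\geq m$ is closed under $\IC$ and $\MC$ by inspection of \Cref{tdef}, so by \Cref{frres} applied to $\trf$, the restriction $\trfp{l}{m}$ is an $\fofs$ frame.''
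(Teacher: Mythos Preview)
Your proposal is correct and follows essentially the same approach as the paper: both argue that the set of records with indices at least $(l,m)$ is closed under $\IC$ and $\MC$, then invoke \Cref{frres} together with \Cref{checkframe}. Your version simply spells out the closure check in slightly more detail than the paper's one-line proof.
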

\begin{proof}
	The set of records in question is closed under $\IC$ and $\MC$, so by \Cref{frres} and \Cref{checkframe}, $\trfp{l}{m}$ is an $\fofs$ frame.
\end{proof}

\begin{center}
	\begin{figure}
		\begin{tikzpicture}[modal]
			\node[point,fill=red!50!yellow] (w) at (-3.25,0.25) {};
			\node[point,fill=red] (w') [above=1cm of w] {};
			\node[point,fill=yellow] (x) [right=1cm of w] {};
			\path[->] (w) edge[color=blue] (x);
			\path[->] (w) edge (w');
			\fill[fill=red!50!white,rounded corners] (-.25,-.75) -- (-.25,2.25) -- (1.25,2.25) -- (1.25,.75) -- (.75,.75) -- (.75,-.25) -- (.25,-.25) -- (.25,-.75) -- cycle;
			\fill[fill=yellow!50!white,rounded corners] (-.25,-.75) -- (-.25,1.25) -- (1.75,1.25) -- (1.75,.75) -- (.75,.75) -- (.75,-.25) -- (.25,-.25) -- (.25,-.75) -- cycle;
			\fill[fill=orange!50!white,rounded corners] (-.25,-.75) -- (-.25,1.25) -- (1.25,1.25) -- (1.25,.75) -- (.75,.75) -- (.75,-.25) -- (.25,-.25) -- (.25,-.75) -- cycle;
			\fill[fill=orange!50!white] (1.25,1.25) -- (1.25,1) -- (1,1) -- (1,1.25) -- cycle;
			\draw[color=black,thick,rounded corners] (-.25,-.75) -- (-.25,2.25) -- (1.25,2.25) -- (1.25,.75) -- (.75,.75) -- (.75,-.25) -- (.25,-.25) -- (.25,-.75) -- cycle;
			\draw[color=black,thick,rounded corners] (-.25,-.75) -- (-.25,1.25) -- (1.75,1.25) -- (1.75,.75) -- (.75,.75) -- (.75,-.25) -- (.25,-.25) -- (.25,-.75) -- cycle;
			\node[point] (00) at (0,-.5) {};
			\node[point] (01) at (0,0) {};
			\node[point] (02) at (0,.5) {};
			\node[point] (03) at (0,1) {};
			\node[point] (04) at (0,1.5) {};
			\node[point] (05) at (0,2) {};
			\node[point] (11) at (0.5,0) {};
			\node[point] (12) at (0.5,.5) {};
			\node[point] (13) at (0.5,1) {};
			\node[point] (14) at (0.5,1.5) {};
			\node[point] (15) at (0.5,2) {};
			\node[point] (23) at (1,1) {};
			\node[point] (24) at (1,1.5) {};
			\node[point] (25) at (1,2) {};
			\node[point] (33) at (1.5,1) {};
			\node[xpoint] (34) at (1.5,1.5) {};
			\node[xpoint] (35) at (1.5,2) {};
			\filldraw[color=black,thick,fill=green!50!white,rounded corners] (3.75,-.75) -- (3.75,2.25) -- (5.75,2.25) -- (5.75,.75) -- (4.75,.75) -- (4.75,-.25) -- (4.25,-.25) -- (4.25,-.75) -- cycle;
			\node[point] (00*) at (4,-.5) {};
			\node[point] (01*) at (4,0) {};
			\node[point] (02*) at (4,.5) {};
			\node[point] (03*) at (4,1) {};
			\node[point] (04*) at (4,1.5) {};
			\node[point] (05*) at (4,2) {};
			\node[point] (11*) at (4.5,0) {};
			\node[point] (12*) at (4.5,.5) {};
			\node[point] (13*) at (4.5,1) {};
			\node[point] (14*) at (4.5,1.5) {};
			\node[point] (15*) at (4.5,2) {};
			\node[point] (23*) at (5,1) {};
			\node[point] (24*) at (5,1.5) {};
			\node[point] (25*) at (5,2) {};
			\node[point] (33*) at (5.5,1) {};
			\node[point] (34*) at (5.5,1.5) {};
			\node[point] (35*) at (5.5,2) {};
			\node[point,fill=red!50!yellow] (W) at (7.75,0.25) {};
			\node[point,fill=red] (W') [above=1cm of W] {};
			\node[point,fill=yellow] (X) [right=1cm of W] {};
			\node[point,fill=green] (X') [right=1cm of W'] {};
			\path[->] (W) edge[color=blue] (X);
			\path[->] (W) edge (W');
			\path[->] (W') edge[color=blue] (X');
			\path[->] (X) edge (X');
			\node (a1) at (-1.25,.75) {$\Longrightarrow$};
			\node (a2) at (2.75,.75) {$\Longrightarrow$};
			\node (a3) at (6.75,.75) {$\Longrightarrow$};
		\end{tikzpicture}
	\caption{A diagram representing some particular instance of the argument in the proof of \Cref{checkframe} that (FC2) holds for the trace frame}
	\end{figure}
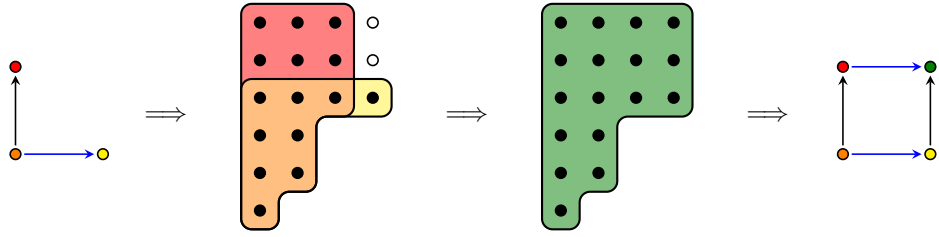
\end{center}
\begin{defn}
	The \textit{trace model} of $\fofs$ is $\trm=(\WC, \IC, \MC, \DC, \InC)$ where $\InC=(\InnC,\InpC)$ is defined as follows:
	\begin{enumerate}[(i)]
		\item Where $\rho$ is an $(l,m)$-record, $\InnC(\rho):\sigma^\nu\to\sigma^\nu_{l,m}$ is given by $\InnC(\rho)(c)=c$.
		\item Where $\rho$ is an $(l,m)$-record and $P$ has arity $n$, $\InpC(\rho):\sigma^\pi_{l,m}\to\pso{\DWC(\rho)}$ is the function given by $P\mapsto\setb{\bar{c}\in \DWC(\rho)^n}{P(\bar{c})\in\ur{\rho}}$.
	\end{enumerate}
\end{defn}

\begin{prop}
	$\trm$ is an $\fofs$ model.
\end{prop}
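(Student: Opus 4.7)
The plan is to unpack \Cref{inter} and verify each clause in turn for $\InC=(\InnC,\InpC)$ on the already-verified $\fofs$ frame $\trf$. The argument should be entirely routine bookkeeping using the Hilbert axioms and the definition of a record, so I do not anticipate a substantive obstacle anywhere.

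For clause (i), each $c\in\sigma^\nu$ is interpreted as $c$ itself, and $\sigma^\nu\subseteq \sigma^\nu_{l,m}=\DWC(\rho)$ for every $(l,m)$-record $\rho$, so the assigned element lies in every domain as required. For the first sub-item of clause (ii), the containment $\InpC(\rho)(P)\subseteq\ps{\DWC(\rho)^n}$ is immediate from the definition.

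For persistence of $\InpC$ along $\IC$, the key small observation is that $\rho\IC\rho'$ implies $\ur{\rho}\subseteq \ur{\rho'}$: if $\rho$ is an $(l,m)$-record and $\rho'$ is an $(l,m')$-record end-extending it, then iterating clause (iv) of the record definition along the right edge of $\dom\rho'$ gives $\ur{\rho}=\rho(l,m)\subseteq \rho(l,m+1)\subseteq\cdots\subseteq\rho'(l,m')=\ur{\rho'}$, so any $\bar{c}$ with $P(\bar{c})\in\ur{\rho}$ still satisfies $P(\bar{c})\in\ur{\rho'}$. For the $\DEC$-respect condition, I would assume $c_i\DEC[\rho] c_i'$ for each $i$ and $\bar{c}\in\InpC(\rho)(P)$, unpack these as $c_i\equals c_i'\in\ur{\rho}$ and $P(\bar{c})\in\ur{\rho}$, and then apply $\idsub$ a total of $n$ times to the modal-free formula $P(\bar{x})$, using the deductive closure of $\ur{\rho}$, to conclude $P(\bar{c}')\in\ur{\rho}$.

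To the extent the domain system conditions on $\DC$ were not already dispatched during \Cref{checkframe}, I would handle them by the same bookkeeping: the inclusions $\DWC(\rho)\subseteq\DWC(\rho')$ under $\IC$ and $\MC$ reduce to $\sigma^\nu_{l,m}\subseteq\sigma^\nu_{l',m'}$ whenever $(l,m)\leq (l',m')$ coordinatewise; reflexivity, symmetry, and transitivity of $\DEC[\rho]$ follow from $\idref$ and \DFref{eucid} together with deductive closure of $\ur{\rho}$; and preservation of $\DEC$ along $\IC$ follows from the very inclusion $\ur{\rho}\subseteq\ur{\rho'}$ established above.
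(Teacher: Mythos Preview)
Your proposal is correct and follows essentially the same approach as the paper: verify the interpretation conditions directly via the inclusion $\ur{\rho}\subseteq\ur{\rho'}$ under $\IC$ and the axiom $\idsub$ for the equivalence-respect clause. If anything you are slightly more thorough, explicitly justifying that inclusion (minor slip: the intermediate values should be $\rho'(l,j)$, not $\rho(l,j)$) and also spelling out the domain-system verifications that the paper's proof of \Cref{checkframe} leaves implicit.
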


\begin{proof}
	Since \Cref{checkframe} establishes that $\trf$ is an $\fofs$ frame, what remains is to check that $\InC$ is an interpretation on $\trf$. For $c\in\sigma^\nu$, $\InnC(c)=c$ is in the domain assigned to an arbitrary $(l,m)$-record by the definitions of $\gridlang{l}{m}$ and $\DC$. Now we check that $\InpC$ satisfies the properties listed in \Cref{inter}.
	\begin{itemize}
		\item If $\sigma^\alpha(P)=n$, then it is clear from the definition of $\InpC$ that $\InpC(\rho)(P)$ will be a subset of $\ps{\DWC(\rho)^n}$
		\item Suppose that $\rho\IC\rho'$ and that $\bar{c}\in \InpC(w)(P)$. We want to show that $\bar{c}\in \InpC(w')(P)$. From the definition of $\InpC(w)$, we have that $P(\bar{c})\in \ur{\rho}$. Since $\ur{\rho}\subseteq \ur{\rho'}$, we have $P(\bar{c})\in\ur{\rho'}$, so $\bar{c}\in\InpC(w')(P)$.
		\item Suppose that $c_i \DEC[\rho] c'_i$ for $1\leq i \leq n$ and that $\bar{c}\in \InpC(\rho)(P)$. We want to show that $\bar{c'}\in\InpC(\rho)(P)$, as well. We must have $P(\bar{c})\in \ur{\rho}$. Since also have $c_i\equals c'_i\in\ur{\rho}$ for each $i$, the axiom $\idsub$ and deductive closure of $\ur{\rho}$ guarantees that $P(\bar{c'})\in\ur{\rho}$. Therefore, $\bar{c'}\in\InpC(\rho)(P)$.\qedhere
	\end{itemize}
\end{proof}

\begin{defn}
	The \textit{$(l,m)$-partial trace model} $\trmp{l}{m}$ of $\fofs$ is the submodel $\trm$ obtained by restricting to $\trfp{l}{m}$ and for an $(l',m')$-record $\rho$ defining $\InnC_{l,m}(\rho)$ to be the function $\InnC_{l,m}(\rho):\sigma_{l,m}^\nu\to\sigma^\nu_{l',m'}$ given by $\InnC(\rho)(c)=c$.
\end{defn}

\begin{prop}
	For any $l,m\in\nats$, $\trmp{l}{m}$ is an $\fofs$ model.
\end{prop}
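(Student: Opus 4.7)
The plan is to mimic the proof of the preceding proposition ($\trm$ is an $\fofs$ model), since the previous proposition already establishes that $\trfp{l}{m}$ is an $\fofs$ frame. All that remains is to verify that $\InC_{l,m}=(\InnC_{l,m},\InpC|_{\trfp{l}{m}})$ satisfies the two clauses of \Cref{inter} on the restricted frame, now interpreting the expanded signature $\sigma_{l,m}$ rather than $\sigma$.

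First I would check clause (i) of \Cref{inter}: for any $c\in\sigma^\nu_{l,m}$ and any $(l',m')$-record $\rho$ in $\WC$ with $l'\geq l$ and $m'\geq m$, we need $\InnC_{l,m}(\rho)(c)=c\in\DWC(\rho)=\sigma^\nu_{l',m'}$. This follows because the containments $\sigma^\nu_{l,m}\subseteq\sigma^\nu_{l',m'}$ are immediate from the definition of the grid of signatures (since $C_{i,j}\subseteq \sigma^\nu_{l',m'}$ whenever $i<l'$, $j<m'$).

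Next I would verify clause (ii), which has three sub-conditions on $\InpC$. The arity condition and the persistence condition along $\IC$ are inherited verbatim from the corresponding verifications in the full trace model: the definition of $\InpC(\rho)(P)$ did not depend on the starting signature, and $\ur{\rho}\subseteq\ur{\rho'}$ whenever $\rho\IC\rho'$ by the end-extension condition built into $\IC$. For the congruence condition with respect to $\DEC$, the argument is identical to the full trace model case: if $c_i\DEC[\rho] c'_i$ for each $i$ and $P(\bar{c})\in\ur{\rho}$, then by $\idsub$ (applicable because $P(\bar{x})$ is modal-free) and deductive closure of $\ur{\rho}$, we obtain $P(\bar{c'})\in\ur{\rho}$.

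There is no genuine obstacle here; the only subtlety, which is the one worth stating, is that the domain requirement is met precisely because $\sigma^\nu_{l,m}\subseteq\sigma^\nu_{l',m'}$ whenever $(l',m')$ satisfies $l'\geq l$ and $m'\geq m$, i.e., whenever $\rho$ lies in the underlying set of $\trfp{l}{m}$. Because we have restricted to exactly such records, the interpretation of every constant of $\sigma_{l,m}$ is forced to land in the domain, and the rest of the verification is a direct repetition of the full trace model argument.
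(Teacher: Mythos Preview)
Your proposal is correct and follows exactly the approach the paper intends: the paper's own proof consists of the single sentence ``An argument virtually identical to the proof of \Cref{checkframe} suffices,'' and what you have written is precisely that virtually identical argument spelled out in full, including the one point worth isolating (that $\sigma^\nu_{l,m}\subseteq\sigma^\nu_{l',m'}$ for $l'\geq l$, $m'\geq m$, so every constant of $\sigma_{l,m}$ lands in the domain of every record in $\trfp{l}{m}$).
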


\begin{proof}
	An argument virtually identical to the proof of \Cref{checkframe} suffices.
\end{proof}

In order to make use of these partial models, we must quickly check that passing to them respects truth.

\begin{prop}
	\label{partialtruth}
	If $\phi\in\gridlang{l}{m}$ and $\rho$ is an $(l',m')$-record for $l'\geq l$ and $m'\geq m$, then $\trmp{l}{m},\rho\Vdash\phi$ if and only if $\trmp{l'}{m'},\rho\Vdash\phi$.
\end{prop}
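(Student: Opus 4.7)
The plan is to induct on the complexity of $\phi$, after strengthening the statement to cover arbitrary formulas $\phi' \in \mathcal{F}(\sigma_{l,m})$ (possibly with free variables) evaluated under a $\rho$-VA $g$. This strengthening is coherent because $\DWC(\rho)=\sigma^\nu_{l'',m''}$ for an $(l'',m'')$-record $\rho$ depends only on $\rho$, not on which partial model we consider, so any $\rho$-VA in one partial model is a $\rho$-VA in the other.

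The preliminary structural point to establish is that for any $\rho$ lying in $\trmp{l'}{m'}$, the sets $\IC(\rho)$ and $\MC(\rho)$ coincide whether computed in $\trmp{l}{m}$ or in $\trmp{l'}{m'}$. This is immediate from \Cref{tdef}: the relation $\IC$ preserves the first coordinate and weakly increases the second, while $\MC$ increments the first coordinate and preserves the second. Since $\rho$ itself satisfies $l''\geq l'\geq l$ and $m''\geq m'\geq m$, every $\IC$- or $\MC$-successor automatically has coordinates satisfying both pairs of inequalities, so each such successor belongs to both partial frames. Consequently, $(\MC\circ\IC)(\rho)$ also agrees in the two models.

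With these observations in hand, the induction is routine. For atomic $P(\bar{t})$ and $s\equals t$, the denotations of constants occurring in $\phi'$ are identical in the two models: such constants lie in $\sigma^\nu_{l,m}$, and by definition both $\InnC_{l,m}(\rho)$ and the analogous map for $\trmp{l'}{m'}$ act as the identity on $\sigma^\nu_{l,m}$, while $\InpC$ (and hence the extension of each predicate) is inherited from $\trm$ in both cases. The cases $\land$, $\lor$ are immediate from the induction hypothesis. For $\to$ we quantify over $\IC$-successors of $\rho$, for $\square$ and $\lozenge$ over $(\MC\circ\IC)(\rho)$ and $\MC(\rho)$ respectively, and for $\forall$ and $\exists$ over $\IC$-successors (in the $\forall$ case) together with elements of $\DWC$ at the relevant world. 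By the structural observation all these successor sets coincide between the two models, and the domains $\DWC(\cdot)$ are intrinsic to worlds, so the induction hypothesis applies directly at each successor (and at each shifted VA $g[x:=a]$ in the quantifier cases).

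The argument has no real obstacle: the content of the proof is just the verification that nothing relevant to the evaluation of a formula in $\gridlang{l}{m}$ is lost in passing from $\trmp{l}{m}$ to the smaller $\trmp{l'}{m'}$. The only point requiring any care is formulating the strengthened inductive statement for formulas with free variables, after which the inductive steps reduce to the structural observation above.
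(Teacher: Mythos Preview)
Your argument is correct. The induction on formula complexity, after strengthening to all formulas of $\mathcal{F}(\sigma_{l,m})$ evaluated under a variable assignment and quantifying over all records in $\trmp{l'}{m'}$, goes through exactly as you describe: the crucial structural fact is that $\IC$- and $\MC$-successors of any record in $\trmp{l'}{m'}$ are computed identically in both partial frames, and the domains and interpretation of predicates are intrinsic to the record.

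The paper takes a more packaged route. Rather than inducting directly, it introduces an intermediate model $\model$, namely the restriction of $\trmp{l}{m}$ to the underlying set of $\trmp{l'}{m'}$, and observes two facts: (1) $\model$ is a \emph{generated submodel} of $\trmp{l}{m}$ (which is exactly your structural observation that the successor sets are closed), so truth is preserved between $\trmp{l}{m}$ and $\model$; and (2) $\model$ is a \emph{reduct} of $\trmp{l'}{m'}$ to the smaller signature $\sigma_{l,m}$, so truth of $\gridlang{l}{m}$-formulas is preserved between $\model$ and $\trmp{l'}{m'}$. Each of these two facts is a standard preservation result whose proof is itself an induction on formula complexity of precisely the shape you carry out. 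So your argument is essentially an unpacked version of the paper's: what the paper gains in brevity by invoking the generated-submodel and reduct lemmas, you supply explicitly. Neither approach has a real advantage here beyond presentation.
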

\begin{proof}
	Consider the model $\model=\trmp{l}{m}(\setb{\rho}{\rho\text{ is an } (l',m')\text{-record}})$. We can see that $\model$ is a generated submodel of $\trmp{l}{m}$ and a reduct of $\trmp{l'}{m'}$, so we have $\trmp{l}{m},\rho\Vdash\phi$ if and only if $\model,\rho\Vdash\phi$ if and only if $\trmp{l'}{m'},\rho\Vdash\phi$.
\end{proof}

\begin{lem}[Truth]
	For all $l,m\in\nats$ if $\rho$ is an $(l,m)$-record $\phi\in\gridlang{l}{m}$, then $\trmp{l}{m},\rho\Vdash\phi$ if and only if $\phi\in\ur{\rho}$.
\end{lem}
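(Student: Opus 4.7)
The plan is to proceed by induction on the complexity of $\phi$. The atomic cases ($P(\bar{t})$, $s\equals t$, and $\bot$) fall out of the definitions of $\InpC$, $\DEC$, and the consistency of $\ur{\rho}$. Conjunction and disjunction use deductive closure and primeness. The existential case is handled by Henkinness in one direction and by the axiom $\Exist$ in the other, using crucially that $\DWC(\rho)=\sigma^\nu_{l,m}$, so that any witness is already a constant of $\gridlang{l}{m}$. Throughout, \Cref{partialtruth} will be invoked as needed to pass between the various partial trace models whenever a successor lives in a model with larger indices.

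The harder cases are those where, to prove the contrapositive, a new record extending $\rho$ must be constructed to refute $\phi$ at some successor. In each of them the recipe is the same: identify a consistent pair over a suitable expanded language, extend it using one of the earlier tools, and glue the result onto $\rho$ as a new row, column, or corner. For $\phi\to\psi\notin\ur{\rho}$ the pair $(\ur{\rho}\cup\{\phi\},\{\psi\})$ is consistent, and \Cref{getdschain} applied to the top row of $\rho$ produces an intuitionistic successor $\rho'$ with $\phi\in\ur{\rho'}$ and $\psi\notin\ur{\rho'}$. For $\forall x\phi(x)\notin\ur{\rho}$, \Cref{gengen} together with the freshness of $c^+_{l,m}$ yields the consistency of $(\ur{\rho},\{\phi(c^+_{l,m})\})$, and again \Cref{getdschain} delivers the required successor. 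For $\lozenge\phi\in\ur{\rho}$, the pair $(\yb{\ur{\rho}}\cup\{\phi\},\nd{\ur{\rho}})$ is consistent by $\fstwo$ and regularity (otherwise $\lozenge\delta\in\ur{\rho}$ for some $\delta\in\nd{\ur{\rho}}$), and \Cref{getreal} in the signature $\sigma_{l+1,m}$ yields a saturated theory to attach as a new column on the right of $\rho$.

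The case I expect to be the main obstacle is $\square\phi\notin\ur{\rho}$: the plan is to first apply \Cref{getdschain} with $\Omega_n=\{\square\phi\}$ to obtain an $(l,m+1)$-record $\rho'$ whose top theory is $\phi$-averse, and then invoke \Cref{averse} to produce a saturated theory $\Delta\in\realth{l+1}{m+1}$ with $\ur{\rho'}\R\Delta$ and $\phi\notin\Delta$. Attaching $\Delta$ at the corner $(l+1,m+1)$ yields a record $\tau$ with $\rho\IC\rho'\MC\tau$ and $\phi\notin\ur{\tau}$, at which point the inductive hypothesis together with \Cref{partialtruth} closes the case. The remaining technical burden is bookkeeping: verifying at each extension step that the updated domain remains a Ferrers set, that the chosen consistent pair lives in the right expanded language $\gridlang{l}{m}^+$, and that the membership data transfers correctly through \Cref{getdschain} and its companions.
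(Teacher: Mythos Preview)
Your proposal is correct and follows essentially the same approach as the paper: induction on complexity, with \Cref{getdschain} applied to the top row of $\rho$ for the $\to$, $\forall$, and $\square$ refutation cases (the last using the $\phi$-averse clause followed by \Cref{averse}), and \Cref{getreal} for the $\lozenge$ witness case, with \Cref{partialtruth} mediating between partial trace models. The only cosmetic difference is that the paper packages the $\lozenge$ consistency step via \DFref{fstworeverse} rather than citing $\fstwo$ directly, but that is the same argument.
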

\begin{proof}	
	We proceed by induction on formula complexity.
	
	The cases for atomic sentences, conjunction, disjunction, and sentences $\exists x\phi$ where $x$ is not free in $\phi$ are immediate. The case for a sentence $\forall x\phi$ where $x$ is not free in $\phi$ follows from \Cref{persistence}. 
	
	$\phi=\psi\to\chi$. Suppose that $\psi\to\chi\in \ur{\rho}$. For any record $\tau$ with $\rho\IC \tau$ such that $\trmp{l}{m}, \tau\Vdash\psi$, we have $\psi\in\ur{\tau}$ by induction. Since $\psi\to\chi\in\ur{\tau}$ by the definition of $\IC$, we have $\chi\in \ur{\tau}$, which in turn implies $\trmp{l}{m}, \tau\Vdash\chi$. Therefore, $\trmp{l}{m},\rho\Vdash \psi\to\chi$. In the other direction, suppose that $\psi\to\chi\notin\ur{\rho}$. This implies that $(\ur{\rho}\cup\{\psi\},\{\chi\})$ is a consistent pair. We apply \Cref{getdschain} to the top level of $\rho$ and the pair in question, which allows us to extend $\rho$ to an $(l,m+1)$-record $\rho'$ such that $\psi\in\ur{\rho'}$ and $\chi\notin\ur{\rho'}$. By the induction hypothesis, $\trmp{l}{m},\rho'\Vdash \psi$ but $\trmp{l}{m},\rho'\nVdash\chi$, so $\trmp{l}{m},\rho\nVdash\psi\to\chi$.
	
	$\phi=\lozenge\psi$. If $\lozenge\psi\in \ur{\rho}$, we want to argue that $(\yb{\ur{\rho}}\cup\{\psi\},\nd{\ur{\rho}})$ is consistent. If it were not, $\ur{\rho'}\vdash \square(\psi\to \delta)$ where $\delta\in \nd{\ur{\rho}}$. By \DFref{fstworeverse}, $\ur{\rho'}\vdash\lozenge\delta$, which is a contradiction. Therefore, we can apply \Cref{getreal} to obtain $\Gamma\in \realth{l+1}{m}$ extending this pair. Since $\ur{\rho}\R\Gamma$ we can extend $\rho$ by $\Gamma$ in the following way to obtain the $(l+1,m)$-record $\tau:\dom\rho\cup\{(l+1,m)\}\to\realthall$:
	\[\tau(i,j) = \begin{cases} \Gamma & (i,j)=(l+1,m) \\ \rho(i,j) & \text{else}.\end{cases}\]
	Note that $\psi\in\ur{\tau}$. By the induction hypothesis, $\trmp{l}{m},\tau\Vdash\psi$. Since $\rho\MC \tau$, $\trmp{l}{m},\rho\Vdash\lozenge\psi$. Now assume $\lozenge\psi\notin\ur{\rho}$. By the definition of $\MC$, for every $\tau$ with $\rho\MC \tau$, we must have $\psi\notin \ur{\tau}$. By induction, for every such $\tau$, $\trmp{l}{m},\tau\nVdash\psi$, so $\trmp{l}{m},\rho\nVdash\lozenge\psi$.
	
	$\phi=\square\psi$. Suppose that $\square\phi\in \ur{\rho}$ and that $\rho\IC \rho'\MC \tau'$. We will also have $\square\phi\in\ur{\rho'}$, so by the definition of $\MC$, $\phi\in \ur{\tau'}$. By induction $\trmp{l}{m},\tau'\Vdash \phi$, so $\trmp{l}{m},\rho\Vdash\square\phi$. If $\square\phi\notin\ur{\rho}$, we apply \Cref{getdschain} to the top row of $\rho$ and $(\ur{\rho},\{\square\phi\})$ to obtain the modal chain $\langle\Gamma_0,\ldots,\Gamma_l\rangle$ such that for each $i$, $\rho(i,m)\U \Gamma_i$ and $\Gamma_l$ is $\phi$-averse. This allows us to define the $(l,m+1)$-record $\rho': \dom\rho\cup([0,l]\times\{m+1\})$ by \[\rho'(i,j) = \begin{cases}\Gamma_i&j=m+1\\ \rho(i,j)&\text{else.}\end{cases}\]
	We then use \Cref{averse} applied to $\ur{\rho'}$ to construct $\Delta\in\realth{l+1}{m+1}$ such that $\ur{\rho'}\R \Delta$ and $\phi\notin\Delta$. This allows us to define the record $\tau':\dom\rho'\cup\{(l+1,m+1)\}$: \[\tau'(i,j)=\begin{cases}\Delta&(i,j)=(l+1,m+1)\\ \rho'(i,j)&\text{else}.\end{cases}\]
	As $\phi\notin\ur{\tau'}$, by induction, $\trmp{l}{m}, \tau'\nVdash\phi$, so $\trmp{l}{m},\rho\nVdash\square\phi$.
	
	$\phi=\exists x\phi(x)$. If $\exists x\phi(x)\in \ur{\rho}$, by Henkinness, $\phi(c)\in\ur{\rho}$ for some constant $c\in\sigma^\nu_{l,m}$. By induction, $\trmp{l}{m},\rho\Vdash\phi(c)$, so $\trmp{l}{m},\rho\Vdash \exists x\phi(x)$. If $\exists x\phi(x)\notin\ur{\rho}$, then by $\Exist$, for any $c$, we have $\phi(c)\notin \ur{\rho}$. By induction, for every $c\in \sigma^\nu_{l,m}$, $\trmp{l}{m},\rho\nVdash\phi(c)$. Since every element in $\DWC(\rho)$ is named by some constant, it must be that $\trmp{l}{m},\rho\nvdash\exists x\phi(x)$.
	
	$\phi=\forall x\phi(x)$. Suppose that $\forall x\phi(x)\in \ur{\rho}$ and $\rho\IC \rho'$, and say that $\rho'$ is an $(l,m')$-record. Since we have $\forall x\phi(x)\in\ur{\rho'}$, for any $c\in\sigma^\nu_{l,m'}$, $\phi(c)\in\ur{\rho'}$. By induction $\trmp{l}{m'}, \rho'\Vdash \phi(c)$ for each $c$. We then have $\trmp{l}{m}\Vdash\forall x\phi(x)$. Now suppose that $\forall x\phi(x)\notin\ur{\rho}$. We now observe that $(\ur{\rho},\{\phi(c_{l,m}^+)\})$ must be consistent, since if it were not, \Cref{gengen} would immediately tell us that $\ur{\rho}\vdash\forall x\phi(x)$. Apply \Cref{getdschain} to the top row of $\rho$ and this pair to obtain a modal chain $\langle \Gamma_0,\ldots,\Gamma_l\rangle$ such that for each $i$, $\rho(i,m)\U \Gamma_i$ and $\Gamma_i$ extends $(\ur{\rho},\{\phi(c_{l,m}^+)\})$. Using this modal chain, we define $\rho':\dom\rho\cup([0,l]\times\{m+1\})$ by
	\[\rho'(i,j) = \begin{cases} \Gamma_i& j=m+1\\ \rho(i,j) & \text{else}.\end{cases}\]
	Since $\phi(c_{l,m}^+)\notin\ur{\rho'}$, the induction hypothesis grants us $\trmp{l}{m+1},\rho'\nVdash\phi(c_{l,m}^+)$, which implies that $\trmp{l}{m+1},\rho'\nVdash\forall x\phi(x)$. By \Cref{partialtruth}, $\trmp{l}{m},\rho'\nVdash\forall x\phi(x)$, so as $\rho\IC\rho'$, $\trmp{l}{m},\rho\nVdash\forall x\phi(x)$.
\end{proof}

\begin{thm}[Strong completeness]
	If $\Gamma\Vdash\phi$, then $\Gamma\vdash \phi$.
\end{thm}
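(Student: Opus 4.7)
The plan is to proceed by contraposition: assuming $\Gamma \nvdash \phi$, I would construct a model--world pair of the trace model at which every sentence of $\Gamma$ holds but $\phi$ fails. Since $\Gamma \nvdash \phi$, the pair $(\Gamma, \{\phi\})$ is consistent, and the idea is to push this all the way up to a point of $\trm$ via the machinery already developed.

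The first step is to extend $(\Gamma, \{\phi\})$ to a $(0,0)$-saturated theory $\Gamma^* \in \realth{0}{0}$ using \Cref{getreal}. To do this I need infinitely many constants in $\sigma^\nu$ that are fresh for $\Gamma \cup \{\phi\}$; should $\Gamma$ happen to exhaust the constants of $\sigma$, I would first pass conservatively to an expanded signature $\sigma^* \supseteq \sigma$ with countably many fresh constants and build the trace model over $\sigma^*$, noting that $(\Gamma, \{\phi\})$ remains consistent in $\lang[\sigma^*]$ because the additional axioms instantiated over new constants are derivable from substitution.

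Next, I would form the $(0,0)$-record $\rho$ whose Ferrers set is the singleton $\{(0,0)\}$ and whose value at $(0,0)$ is $\Gamma^*$. The record conditions involving $\R$ and $\U$ are vacuous for a one-point Ferrers set, so $\rho$ is indeed a legitimate record, and by construction $\ur{\rho} = \Gamma^*$. Applying the Truth Lemma, each $\gamma \in \Gamma \subseteq \Gamma^* = \ur{\rho}$ satisfies $\trm, \rho \Vdash \gamma$, while $\phi \notin \Gamma^* = \ur{\rho}$ gives $\trm, \rho \nVdash \phi$. This witnesses $\Gamma \nVdash \phi$, completing the contrapositive.

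No step here is the main obstacle: all the genuinely hard work has already been absorbed into the Truth Lemma and the modal chain extension theorem (\Cref{getdschain}). The completeness theorem itself is essentially a one-line unpacking of those results, with only the minor bookkeeping of ensuring enough fresh constants to invoke \Cref{getreal}.
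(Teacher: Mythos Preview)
Your proposal is correct and follows essentially the same approach as the paper: contrapose, use \Cref{getreal} (in an expanded signature if necessary) to obtain a saturated theory separating $\Gamma$ from $\phi$, wrap it as the one-point $(0,0)$-record, and invoke the Truth Lemma. The paper's own proof is exactly this argument, stated in a single paragraph.
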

\begin{proof}
	In some signature $\sigma$, suppose that $\Gamma\nvdash\phi$. This is tantamount to the consistency of the pair $(\Gamma,\{\phi\})$. Therefore by \Cref{getreal}, in an expanded signature $\sigma'$, there is a saturated theory $\Delta$ such that $\Gamma\subseteq\Delta$ and $\phi\notin\Delta$. We look at the trace model $\trm$ for $\sigma'$. There is a $(0,0)$-record $\rho$ that is just the single point $(0,0)$ mapped to $\Delta$. By the truth lemma, $\trm,\rho\Vdash\Gamma$ but $\trm\nVdash\phi$, so $\Gamma\nVdash\phi$. 
\end{proof}

\section{Other First-Order Fischer Servi Logics}
\label{otherlogics}
\subsection{Extensions by Modal Axioms}
\label{modext}
In this section we consider some extensions of $\fofs$ for which the techniques in this paper directly carry over to establish completeness. We begin by introducing some familiar axioms: $\square \phi\to\lozenge\phi$ ($\dboth$), $\square\phi\to\phi$ ($\tb$), $\phi\to\lozenge\phi$ ($\td$), $\square\phi\to\square\square\phi$ ($\fourbox$), and $\lozenge\lozenge\phi\to\lozenge\phi$ ($\fourdiamond$). These then allow us to define the five additional first-order Fischer Servi logics in \Cref{morelogics}.
\begin{figure}
	\begin{tabular}{| l | l | l | l : l |}
		\hline
		\multicolumn{5}{| c |}{Extensions of $\fofs$}\\
		\hline
		Logic & Definition &Frame Condition & Frame Class & (abbr.)\\
		\hline
		$\fofsd$ & $\fofs\oplus\{\dboth\}$ & $\M$ is serial&$\class[\fofsd]$&$\class[\fofsda]$\\
		$\fofsfour$ & $\fofs\oplus\{\fourbox,\fourdiamond\}$ & $\M$ is transitive&$\class[\fofsfour]$&$\class[\fofsfoura]$\\
		$\fofsdfour$ & $\fofs\oplus\{\dboth,\fourbox,\fourdiamond\}$ & $\M$ is serial and transitive&$\class[\fofsdfour]$&$\class[\fofsdfoura]$\\
		$\fofst$ & $\fofs\oplus\{\tb,\td\}$ & $\M$ is reflexive&$\class[\fofst]$&$\class[\fofsta]$\\
		$\fofssfour$ & $\fofs\oplus\{\tb,\td,\fourbox,\fourdiamond\}$ & $\M$ is a quasi-order&$\class[\fofssfour]$&$\class[\fofssfoura]$\\
		\hline
	\end{tabular}
\caption{The logics considered in \Cref{modext} and their corresponding frame classes}
\label{morelogics}
\end{figure}

\begin{thm}
	For $\logic\in \{\fofsfour,\fofsd,\fofsdfour,\fofst,\fofssfour\}$, $\logic$ is sound with respect to $\class[\logic]$.
\end{thm}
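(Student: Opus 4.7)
The plan is to bootstrap off \Cref{sound}. Since each frame class $\class[\logic]$ consists of $\fofs$ frames satisfying an additional condition on $\M$, the axioms and rules already shared with $\fofs$ are sound on $\class[\logic]$ by \Cref{sound}. It therefore suffices, for each $\logic$, to verify that every new axiom ($\dboth$, $\tb$, $\td$, $\fourbox$, $\fourdiamond$) is valid on every $\fofs$ frame whose modal relation $\M$ satisfies the corresponding condition in \Cref{morelogics}. Each of these is a standard local soundness check at an arbitrary world $w$ relative to an arbitrary $w$-VA $g$.

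Four of the five axioms go through immediately. For $\dboth$, if $\model,w\Vdash_g\square\phi$ and $\M$ is serial, pick any $v$ with $w\M v$; since $\I$ is reflexive we have $v\in(\M\circ\I)(w)$, so $\model,v\Vdash_g\phi$, yielding $\model,w\Vdash_g\lozenge\phi$. For $\tb$, reflexivity of both $\I$ and $\M$ gives $w\in(\M\circ\I)(w)$ directly. For $\td$, reflexivity of $\M$ gives $w\in \M(w)$, so any $\phi$ true at $w$ witnesses $\lozenge\phi$. For $\fourdiamond$, if $\model,w\Vdash_g\lozenge\lozenge\phi$, unravelling the clause produces $v,u$ with $w\M v\M u$ and $\model,u\Vdash_g\phi$; transitivity of $\M$ gives $w\M u$, hence $\model,w\Vdash_g\lozenge\phi$.

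The one case requiring a bit more care is $\fourbox$, since the truth clause for $\square$ is built on the composition $\M\circ\I$ rather than $\M$ alone. Suppose $\model,w\Vdash_g\square\phi$ and consider $v'$ with $w\I v_1\M v\I v_2\M v'$; we need $\model,v'\Vdash_g\phi$. Applying (FC1) to $v\I v_2\M v'$ produces $s$ with $v\M s\I v'$ (and by persistence the value of $\phi$ at $v'$ is controlled by any $\I$-predecessor at which $\phi$ holds). Transitivity of $\M$ applied to $v_1\M v\M s$ and of $\I$ applied to $w\I v_1$ then give $w\I v_1\M s$, so $s\in(\M\circ\I)(w)$ and $\model,s\Vdash_g\phi$; persistence up to $v'$ (when needed) yields $\model,v'\Vdash_g\phi$. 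This is the step where I expect the actual writing to require the most attention, because one must check that the semantic clause for $\square\square\phi$ is threaded correctly through both (FC1) and transitivity of $\M$, rather than simply invoking transitivity of $\M$ as in the classical case.

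Once each axiom has been verified on its class, the theorem follows by a uniform induction on the length of a deduction in $\logic$, identical in structure to the induction used in \Cref{sound}, with the new axioms now handled by the case analysis above.
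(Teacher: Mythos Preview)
Your overall strategy matches the paper's exactly: invoke \Cref{sound} for the shared axioms and rules, then verify each new axiom on its frame class. The paper in fact offloads the latter verifications to \cite{amaandpir}, so you are supplying more detail than the paper itself.

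There is, however, a slip in your $\fourbox$ argument. You write that applying (FC1) to $v\I v_2\M v'$ produces $s$ with $v\M s\I v'$. But in this paper's formulation (\Cref{fsframe} and the accompanying figure), (FC1) says that a path $a\M b\I c$ can be rerouted as $a\I d\M c$; it does \emph{not} say that $a\I b\M c$ can be rerouted as $a\M d\I c$, and the latter implication is not a Fischer Servi frame condition and fails in general. The fix is to apply (FC1) one step earlier in your chain: from $v_1\M v\I v_2$ obtain $b$ with $v_1\I b$ and $b\M v_2$; then transitivity of $\M$ gives $b\M v'$ (from $b\M v_2\M v'$) and transitivity of $\I$ gives $w\I b$ (from $w\I v_1\I b$), so $v'\in(\M\circ\I)(w)$ directly and no appeal to persistence is needed. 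With this correction your proposal goes through.
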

\begin{proof}
	In light of \Cref{sound}, establishing each of these additional soundness results simply requires verifying that in each case the frame condition forces the truth of the new axiom or axioms. Each of these cases is covered in \cite{amaandpir}.
\end{proof}

We can also obtain a completeness result for each of these logics. In contrast to canonical model constructions, even the addition of a simple axiom may require that we alter the definition of modal accessibility to obtain the correct trace model.

First we consider $\fofsd$. In this case we do not have to manually change any part of the definition of the trace model for the seriality condition to obtain. Simply note that for any saturated $\Gamma$, $(\yb{\Gamma},\nd{\Gamma})$ is consistent: if it were not, we would have $\vdash_\fofsda\phi\to\psi$ where $\phi\in\yb{\Gamma}$ and $\psi\in\nd{\Gamma}$. By regularity, $\vdash_\fofsda \lozenge\phi\to\lozenge\psi$, and by $\dboth$, $\lozenge\phi\in \Gamma$. Therefore $\lozenge\psi\in \Gamma$, which is a contradiction. This allows us to construct a modal successor for any record, so the modal relation in the trace model is serial. Hence $\trf_\fofsda$ is in the class $\class[\fofsda]$, so the truth lemma yields a completeness result.

\begin{thm}
	$\fofsd$ is complete with respect to $\class[\fofsda]$.
\end{thm}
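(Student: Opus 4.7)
The plan is to mirror the completeness argument for $\fofs$ (Theorem 4.18) while replacing $\vdash$ with $\vdash_\fofsda$ throughout. Since $\fofsd$ extends $\fofs$ by the single axiom $\dboth$, every ingredient used to build the original trace model remains valid after this replacement: in particular, \Cref{getreal}, \Cref{getdschain}, and the Truth Lemma transfer verbatim once ``saturated theory'' is reinterpreted as ``deductively closed under $\vdash_\fofsda$ (together with the usual primeness and Henkinness).'' The proofs only invoke rules and theorems of $\fofs$, all of which are still theorems of $\fofsd$. This yields, with no additional effort, a trace model $\trm_\fofsda$ and its associated truth lemma: for any $(l,m)$-record $\rho$ and $\phi\in\gridlang{l}{m}$, $\rho$ forces $\phi$ in the partial trace model iff $\phi\in\ur{\rho}$.

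The only new content needed is verifying that the frame $\trf_\fofsda$ belongs to $\class[\fofsda]$, i.e., that $\MC$ is serial. Fix an $(l,m)$-record $\rho$. I claim the pair $(\yb{\ur{\rho}},\nd{\ur{\rho}})$ is $\fofsd$-consistent. Otherwise we would have $\vdash_\fofsda \beta\to\delta$ for some $\beta\in\yb{\ur{\rho}}$ and $\delta\in\nd{\ur{\rho}}$; regularity of $\lozenge$ then yields $\vdash_\fofsda \lozenge\beta\to\lozenge\delta$. Since $\square\beta\in\ur{\rho}$ and $\dboth$ is an axiom, deductive closure of $\ur{\rho}$ gives $\lozenge\beta\in\ur{\rho}$, whence $\lozenge\delta\in\ur{\rho}$, contradicting $\delta\in\nd{\ur{\rho}}$. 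Applying \Cref{getreal} in signature $\sigma_{l+1,m}$ then produces $\Delta\in\realth{l+1}{m}$ extending the pair, which in particular satisfies $\ur{\rho}\R\Delta$. Extending $\rho$ by $(l+1,m)\mapsto\Delta$ and leaving all other values of $\rho$ unchanged produces a record $\tau$ with $\rho\MC\tau$, as desired.

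With seriality established, completeness follows exactly as in the proof of Theorem 4.18. Given $\Gamma\nvdash_\fofsda\phi$ in signature $\sigma$, the pair $(\Gamma,\{\phi\})$ is $\fofsd$-consistent, so a suitable expansion of the signature together with the $\fofsd$-version of \Cref{getreal} yields a $\fofsd$-saturated $\Delta$ with $\Gamma\subseteq\Delta$ and $\phi\notin\Delta$. The $(0,0)$-record $\rho_\Delta$ sending $(0,0)\mapsto\Delta$ then witnesses $\trm_\fofsda,\rho_\Delta\Vdash\Gamma$ and $\trm_\fofsda,\rho_\Delta\nVdash\phi$ via the truth lemma, and since $\trm_\fofsda$ is built on a frame in $\class[\fofsda]$, this shows $\Gamma\nVdash\phi$ over $\class[\fofsda]$. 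The only genuine obstacle is the consistency check of $(\yb{\ur{\rho}},\nd{\ur{\rho}})$ above; everything else is an observation that the entire Section 4 argument is monotone in the underlying consequence relation.
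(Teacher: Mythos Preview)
Your proposal is correct and follows essentially the same approach as the paper: you reuse the trace model construction verbatim under $\vdash_\fofsda$, verify seriality of $\MC$ by showing $(\yb{\ur{\rho}},\nd{\ur{\rho}})$ is consistent via the same regularity-plus-$\dboth$ argument, and then invoke \Cref{getreal} and the truth lemma exactly as in the $\fofs$ case. The only difference is that you are slightly more explicit about constructing the successor record $\tau$, which the paper leaves implicit.
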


For the remaining logics in this subsection, we will need to slightly adjust our trace frame construction. Namely, the definition of the modal relation, which is (\ref{tmodaldef}) in \Cref{tdef}, will be modified in order to guarantee transitivity and/or reflexivity. First, we consider $\fofsfour$ and $\fofsdfour$. For the trace frames $\trf_\fofsfoura$ and $\trf_\fofsdfoura$, we replace the definition of $\MC$ with the following:
\begin{center}
\begin{tabular}{c c c}
	(iii-$\mathsf{4}$) & & \Centerstack{Where $\rho$ is an $(l,m)$-record and $\rho'$ is an $(l',m')$-record, \\$\rho\MC \rho'$ just in case $l\neq l'$, $m=m'$, and $\rho'$ extends $\rho$.}
\end{tabular}
\end{center}
Evidently this is a transitive relation, and in the case of $\fofsdfour$, as was already established, it is moreover serial. Verification of the truth lemma is largely the same with only the right-to-left direction of the $\square$ case and the left-to-right direction of the $\lozenge$ case requiring a modified argument. For the $\square$ case, suppose $\square \phi\in \ur{\rho}$ and $\rho\IC \rho'\MC \tau$. Then $\square^n\phi\in \ur{\rho'}$ for any $n>0$ by $\fourbox$, so the definition of record ensures that $\phi\in\ur{\tau}$. For the $\lozenge$ case, if $\lozenge\phi\notin \ur{\rho}$, then $\lozenge^n\phi\notin\ur{\rho}$ for any $n>0$ by $\fourdiamond$. The definition of record then ensures that $\phi\notin\ur{\tau}$ for any $\tau$ with $\rho\MC \tau$.

\begin{thm}
	$\fofsfour$ is complete with respect to $\class[\fofsfoura]$ and $\fofsdfour$ is sound complete with respect to $\class[\fofsdfoura]$.
\end{thm}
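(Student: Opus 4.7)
The plan is to reproduce the strategy of \Cref{trace} using the revised trace frames $\trf_\fofsfoura$ and $\trf_\fofsdfoura$, where $\MC$ is defined by the clause (iii-$\mathsf{4}$) displayed above. First I would confirm that these frames lie in the correct classes. Transitivity of $\MC$ is immediate from the definition, since end extension of records is transitive and the first coordinate is strictly increased by each $\MC$-step. Seriality for $\fofsdfoura$ follows from the argument already given for $\fofsd$: for any saturated $\Gamma$, the pair $(\yb{\Gamma}, \nd{\Gamma})$ is consistent by $\dboth$ and the regularity of $\lozenge$, which allows one to extend any record by a single $\R$-successor at the next first-coordinate level.

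Second, I would reverify that (FC1) and (FC2) still hold, mirroring the proof of \Cref{checkframe}. The revised $\MC$ is essentially the transitive closure of the original single-step relation, so the verification reduces to iterating the arguments already given. For (FC2) in particular, if $\rho \IC \rho'$ and $\rho \MC \tau$ with $\tau$ at first coordinate $l + k$ for some $k \geq 1$, I would iterate \Cref{fc2} $k$ times along the chain of saturated theories $\ur{\rho} = \tau(l, m) \R \cdots \R \tau(l + k, m) = \ur{\tau}$, lifting each step to the second-coordinate level of $\rho'$ and assembling the results into the desired record $\tau'$ with $\tau \IC \tau'$ and $\rho' \MC \tau'$.

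Third, the truth lemma must be rerun, with only the $\square$ and $\lozenge$ cases demanding new attention. For $\square \psi$ right-to-left, if $\square \psi \in \ur{\rho}$ and $\rho \IC \rho' \MC \tau$, then iterated applications of $\fourbox$ yield $\square^n \psi \in \ur{\rho'}$ for all $n \geq 1$, and repeated application of clause (iii) of the record definition to the $\R$-chain inside $\tau$ forces $\psi \in \ur{\tau}$. The left-to-right direction reuses the witness constructed from \Cref{getdschain} and \Cref{averse} verbatim, since the revised $\MC$ is strictly more permissive than the original single-step relation, so the same record still serves as a counterexample. The $\lozenge$ case is dual: $\fourdiamond$ ensures that if $\lozenge \psi \notin \ur{\rho}$, then $\lozenge^n \psi \notin \ur{\rho}$ for all $n \geq 1$, so by the $\nd{-}$ clause of the record definition, $\psi$ is absent from every $\MC$-successor of $\rho$.

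Completeness then follows exactly as in \Cref{trace}: given $\Gamma \nvdash \phi$, the pair $(\Gamma, \{\phi\})$ is consistent, \Cref{getreal} supplies a saturated extension $\Delta$ in an enlarged signature, and the single-point $(0,0)$-record $\rho$ with $\rho(0,0) = \Delta$ satisfies $\trm_\fofsfoura, \rho \Vdash \Gamma$ but $\trm_\fofsfoura, \rho \nVdash \phi$ by the revised truth lemma. I expect the main obstacle to be the careful bookkeeping in (FC2) when $\MC$-steps span many levels, but as sketched this merely requires iterating \Cref{fc2} along the intermediate chain, so no genuinely new technical ideas should be needed.
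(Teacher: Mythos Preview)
Your proposal is correct and follows essentially the same approach as the paper: modify $\MC$ to clause (iii-$\mathsf{4}$), observe transitivity (and seriality for $\fofsdfoura$ via the $\dboth$ argument), and patch exactly the two directions of the $\square$ and $\lozenge$ cases in the truth lemma using $\fourbox$ and $\fourdiamond$. The only difference is that you explicitly re-verify (FC1) and (FC2) for the revised relation by iterating the single-step arguments, whereas the paper leaves this implicit; your bookkeeping there is sound and fills a gap the paper glosses over.
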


For $\fofst$, we adjust the definition of the relation in the trace frame thus:
\begin{center}
	\begin{tabular}{c c c}
		(iii-$\mathsf{T}$) & & \Centerstack{Where $\rho$ is an $(l,m)$-record and $\rho'$ is an $(l',m')$-record, \\$\rho\MC \rho'$ just in case $l=l'-1$ or $l=l'$, $m=m'$, and $\rho'$ extends $\rho$.}
	\end{tabular}
\end{center}

Since trivially any record extends itself, $\MC$ will indeed be reflexive. The same two cases from the truth lemma will again require modified arguments. If $\square\phi\in \rho$ and $\rho\IC \rho'\MC \tau$, we now have the additional possibility that $\rho'=\tau$. But by $\tb$, $\phi\in\ur{\rho'}$. Similarly if $\lozenge\phi\notin \rho$ and $\rho\MC \tau$, we have to consider that $\rho=\tau$. By $\td$ we cannot have $\phi\in \ur{\rho}$, however, so we still have that for all $\tau$ with $\rho\MC \tau$, $\phi\notin \ur{\tau}$.

\begin{thm}
	$\fofst$ is complete with respect to $\class[\fofsta]$.
\end{thm}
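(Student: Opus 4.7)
The plan is to follow exactly the strategy used above for $\fofsfour$ and $\fofst$'s cousins: build a modified trace model $\trm_\fofsta$ whose modal relation is given by (iii-$\mathsf{T}$), then re-verify only the ingredients that are sensitive to the change in $\MC$. Completeness will then follow by the same argument that established strong completeness of $\fofs$.

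First I would check that $\trf_\fofsta$ lies in $\class[\fofsta]$. Reflexivity of the new $\MC$ is immediate, since any record trivially end extends itself (witnessing the case $l=l'$). I would also re-verify the Fischer Servi compatibility conditions (FC1) and (FC2) under the enlarged relation: the argument of \Cref{checkframe} carries over with essentially no change, because the new diagonal case $l=l'$ just reduces to an intuitionistic extension already handled there.

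Next I would rerun the induction in the truth lemma, pausing only on the $\square$ and $\lozenge$ clauses. For $\square\phi\in\ur{\rho}$ with $\rho\IC\rho'\MC\tau$, the new possibility is $\tau=\rho'$; here $\tb$ and deductive closure of $\ur{\rho'}$ give $\phi\in\ur{\rho'}=\ur{\tau}$, and the induction hypothesis applies. Dually, for $\lozenge\phi\notin\ur{\rho}$ with $\rho\MC\tau$, the new possibility is $\tau=\rho$; here $\td$ rules out $\phi\in\ur{\rho}$ (else $\lozenge\phi\in\ur{\rho}$ by deductive closure, contradicting our assumption), so $\phi\notin\ur{\tau}$, and the induction hypothesis again applies. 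All other cases of the truth lemma are insensitive to the change and go through verbatim.

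Completeness is then obtained in the usual way: if $\Gamma\nvdash_\fofsta\phi$, then $(\Gamma,\{\phi\})$ is consistent, so \Cref{getreal} produces, in some expanded signature, a saturated theory $\Delta$ with $\Gamma\subseteq\Delta$ and $\phi\notin\Delta$; the singleton $(0,0)$-record $\rho$ sending $(0,0)$ to $\Delta$ then satisfies $\trm_\fofsta,\rho\Vdash\Gamma$ but $\trm_\fofsta,\rho\nVdash\phi$ by the modified truth lemma. I do not anticipate any genuine obstacle, since both the frame-theoretic verifications and the two delicate cases of the truth lemma are close parallels of arguments already in the paper; the only thing that warrants care is confirming that (FC1) and (FC2) still hold after enlarging $\MC$ by the diagonal.
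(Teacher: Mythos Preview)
Your proposal is correct and follows essentially the same approach as the paper: modify the trace frame via (iii-$\mathsf{T}$), observe reflexivity, and handle the two new diagonal subcases of the $\square$ and $\lozenge$ clauses using $\tb$ and $\td$ respectively. Your explicit mention of re-checking (FC1) and (FC2) under the enlarged $\MC$ is a detail the paper leaves implicit, but as you note it reduces trivially to the reflexive case and poses no difficulty.
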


Finally we consider $\fofssfour$. The adjusted definition of the modal relation follows:
\begin{center}
	\begin{tabular}{c c c}
		(iii-$\mathsf{S4}$) & & \Centerstack{Where $\rho$ is an $(l,m)$-record and $\rho'$ is an $(l',m')$-record, \\$\rho\MC \rho'$ just in case $m=m'$, and $\rho'$ extends $\rho$.}
	\end{tabular}
\end{center}
This relation is clearly a quasi-order. (In fact, it is a partial order, since some $(l,m)$-record extends another just in case they are equal.) The additional arguments required here are all from either the $\fofsfour$ case or the $\fofst$ case.
\begin{thm}
	$\fofssfour$ is complete with respect to $\class[\fofssfoura]$.
\end{thm}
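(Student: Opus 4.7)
The plan is to adapt the trace model construction by adopting the modified modal relation (iii-$\mathsf{S4}$) and then verify that the resulting structure $\trf_\fofssfoura$ lies in $\class[\fofssfoura]$ and supports the truth lemma. First I would check that under this definition $\MC$ is a quasi-order: reflexivity is immediate since every record extends itself with $m=m'$, and transitivity follows since the extension relation is transitive and the second-coordinate condition $m=m'$ chains. In fact, as the paper notes, this actually gives a partial order, so we certainly get what $\fofssfoura$ requires.

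Next I would confirm that (FC1) and (FC2) still hold. The adjustment to $\MC$ only relaxes the condition $l'=l+1$ to equality of $m$-coordinates plus end-extension, so the argument of \Cref{checkframe} carries over: for (FC1) the intuitionistic extension $\tau'$ of $\tau$ restricted to the smaller $l$-coordinate provides the required witness, and for (FC2) the key application of \Cref{fc2} proceeds exactly as before, since that lemma concerns $\U$ and $\R$ on saturated theories and is insensitive to the precise bookkeeping we use on the trace frame. Hence $\trf_\fofssfoura \in \class[\fofssfoura]$.

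For the truth lemma only the $\square$ and $\lozenge$ cases need modification, and each is a combination of the two arguments sketched for $\fofsfour$ and $\fofst$. In the $\square$ case, if $\square\phi \in \ur{\rho}$ and $\rho \IC \rho' \MC \tau$, then $\rho'$ either equals $\tau$ or is properly extended by $\tau$; in the first case $\tb$ gives $\phi \in \ur{\rho'} = \ur{\tau}$, while in the second case repeated application of $\fourbox$ yields $\square^n\phi \in \ur{\rho'}$ for all $n$, so the definition of record forces $\phi \in \ur{\tau}$. Dually, for $\lozenge$, if $\lozenge\phi \notin \ur{\rho}$ and $\rho \MC \tau$, then $\td$ rules out $\phi \in \ur{\rho}$ (handling $\tau = \rho$), while $\fourdiamond$ ensures $\lozenge^n\phi \notin \ur{\rho}$ for all $n$, so $\phi \notin \ur{\tau}$ when $\tau$ properly extends $\rho$. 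The other cases (Boolean connectives, quantifiers, and the left-to-right $\lozenge$ case, where we build a new modal successor by \Cref{getreal} applied to $(\yb{\ur{\rho}} \cup \{\psi\}, \nd{\ur{\rho}})$) require no alteration.

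The main obstacle is really only the bookkeeping of combining both enrichments simultaneously, since the two modifications impose different constraints on how records are allowed to be modally related; but because (iii-$\mathsf{S4}$) literally unifies them into a single condition, the two arguments assemble cleanly without interference. With the truth lemma in hand, completeness follows by the same final argument as for $\fofs$: given $\Gamma \nvdash_\fofssfoura \phi$, extend $(\Gamma, \{\phi\})$ to a saturated theory $\Delta$ in an expanded signature via \Cref{getreal}, view $\Delta$ as a $(0,0)$-record in $\trm_\fofssfoura$, and apply the truth lemma to conclude $\Gamma \nVdash \phi$ over $\class[\fofssfoura]$.
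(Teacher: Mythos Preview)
Your proposal is correct and follows essentially the same approach as the paper: adopt the modal relation (iii-$\mathsf{S4}$), observe it is a quasi-order (indeed a partial order), and note that the only changes to the truth lemma are in the right-to-left $\square$ and left-to-right $\lozenge$ directions, where one simply combines the $\fofsfoura$ and $\fofsta$ arguments. The paper is terser than you are---it just says the required additional arguments are all borrowed from the $\fofsfour$ and $\fofst$ cases---but your elaboration is exactly what is intended.
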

\begin{center}
	\begin{figure}
		\begin{tikzpicture}
			\node (fofs) at (0,0) {$\fofs$};
			\node (fofs4) at (-1.4,1.4) {$\fofsfour$};
			\node (fofsd) at (1.4,1.4) {$\fofsd$};
			\node (fofsd4) at (0,2.8) {$\fofsdfour$};
			\node (fofst) at (2.8,2.8) {$\fofst$};
			\node (fofss4) at (1.4,4.2) {$\fofssfour$};
			\draw (fofs) -- (fofs4);
			\draw (fofs) -- (fofsd);
			\draw (fofs4) -- (fofsd4);
			\draw (fofsd) -- (fofsd4);
			\draw (fofsd) -- (fofst);
			\draw (fofst) -- (fofss4);
			\draw (fofsd4) -- (fofss4);
		\end{tikzpicture}
		\caption{The lattice of first-order Fischer Servi logics considered in this paper}
		\label{extlat}
	\end{figure}
\end{center}
\subsection{Necessity of Identity and Necessity of Distinctness}
\label{idextensions}
Let $\logic$ be one of the six logics in \Cref{extlat}. We now consider extending $L$ by the necessity of identity and the necessity of distinctness. Formally, we have the axioms $c_1\equals c_2\to \square(c_1\equals c_2)$ ($\NI$) and $\lozenge(c_1\equals c_2)\to c_1\equals c_2$ ($\ND$).

Note that adding $\NI$ to $\logic$ is equivalent to lifting the modal-free restriction on $\idsub$. For one of the aforementioned frame classes $\class[\logic]$, we obtain the restricted class $\class[\logic]^\Rightarrow$ by imposing the condition that for any $w\M v$, if $d_1\DE[w] d_2$ then $d_1\DE[v] d_2$. Similarly, $\class[\logic]^\Leftarrow$ is the subclass of $\class[\logic]$ obtained by imposing the condition that for any $w\M v$, if $d_1\DE[v] d_2$ then $d_1\DE[w] d_2$. Finally, $\class[\logic]^\Leftrightarrow=\class[\logic]^\Rightarrow\cap\class[\logic]^\Leftarrow$. Soundness is easily checked and completeness can be proved using the trace model constructions found in \Cref{tracemod} and \Cref{modext}.

\begin{thm}
	For $\logic\in \{\fofs,\fofsfour,\fofsd,\fofsdfour,\fofst,\fofssfour\}$, we have the following:
	\begin{itemize}
		\item $\logic\oplus\{\NI\}$ is sound and complete with respect to $\class[\logic]^\Rightarrow$;
		\item $\logic\oplus\{\ND\}$ is sound and complete with respect to $\class[\logic]^\Leftarrow$;
		\item $\logic\oplus\{\NI,\ND\}$ is sound and complete with respect to $\class[\logic]^\Leftrightarrow$.
	\end{itemize}
\end{thm}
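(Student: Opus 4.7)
\emph{Plan.} Soundness is a direct valuation-based check handled in one lemma per axiom. For $\NI$, if $\model, w \Vdash_g c_1 \equals c_2$ then $\Inn(c_1) \DE[w] \Inn(c_2)$; for any $v' \in (\M \circ \I)(w)$, the $\I$-monotonicity built into the domain-system definition transports this equivalence to the intermediate world, and the $\Rightarrow$ condition then transports it along the $\M$-step to $v'$, yielding $\model, w \Vdash_g \square(c_1 \equals c_2)$. For $\ND$ the argument is symmetric: a witness $v \in \M(w)$ with $\Inn(c_1) \DE[v] \Inn(c_2)$ is pulled back to $w$ by the $\Leftarrow$ condition.

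For completeness, the plan is to reuse the trace-model constructions of \Cref{tracemod} and \Cref{modext} unchanged, with no modification to the definitions of records, $\IC$, $\MC$, $\DWC$, or $\DEC$. Every ingredient of the proof---saturation, \Cref{diamondextension}, \Cref{getdschain}, the truth lemma---is phrased purely in terms of deductive closure of saturated theories and syntactic manipulation of $\yb{\cdot}$ and $\nd{\cdot}$, and thus carries over verbatim once $\NI$ or $\ND$ is added to the background logic (noting that with $\NI$ on board, the lifted $\idsub$ restriction only enters the interpretation clause at the atomic level, which was already covered by deductive closure). What remains is to verify that the resulting trace frame actually lies in the appropriate sub-class $\class[\logic]^\Rightarrow$, $\class[\logic]^\Leftarrow$, or $\class[\logic]^\Leftrightarrow$.

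To verify the $\Rightarrow$ condition, suppose $\rho \MC \rho'$ in any of the six trace-frame variants and $c_1 \equals c_2 \in \ur{\rho}$. Unwinding $\MC$, $\ur{\rho'}$ is reached from $\ur{\rho}$ by a finite chain of $\R$-successions $\ur{\rho} = \Gamma_0 \R \Gamma_1 \R \cdots \R \Gamma_k = \ur{\rho'}$, where $k = 0$ is allowed in the reflexive variants. At each $\Gamma_i$, $\NI$ and deductive closure give $\square(c_1 \equals c_2) \in \Gamma_i$, and the $\R$-step therefore places $c_1 \equals c_2$ into $\Gamma_{i+1}$; iterating yields $c_1 \equals c_2 \in \ur{\rho'}$, hence $c_1 \DEC[\rho'] c_2$. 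Dually, for the $\Leftarrow$ condition, if $c_1 \equals c_2 \notin \ur{\rho}$, the contrapositive of $\ND$ and closure force $\lozenge(c_1 \equals c_2) \notin \Gamma_0$, i.e.\ $c_1 \equals c_2 \in \nd{\Gamma_0}$; the $\R$-step excludes $c_1 \equals c_2$ from $\Gamma_1$, and the argument iterates to give $c_1 \equals c_2 \notin \ur{\rho'}$. The combined $\Leftrightarrow$ case is the conjunction of these two.

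The main point requiring any real care is the iteration step in the transitive variants ($\fofsfour$, $\fofsdfour$, $\fofssfour$), where a single $\MC$-edge may compose an arbitrary finite $\R$-chain; one must apply $\NI$ (respectively $\ND$) at every intermediate theory rather than once at the source. This is legitimate because identity sentences $c_1 \equals c_2$ belong to $\gridlang{i}{j}$ for every sufficiently large $i,j$ (constants persist across the expanding signatures) and $\NI$ and $\ND$ are theorems of the extended logic, hence lie in every saturated theory's deductive closure regardless of level. No new extension lemma is needed, and the final completeness conclusion then follows by running the original truth-lemma-based argument in the strengthened logic.
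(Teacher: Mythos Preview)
Your proposal is correct and follows exactly the approach the paper indicates: the paper's own proof is the single sentence ``Soundness is easily checked and completeness can be proved using the trace model constructions found in \Cref{tracemod} and \Cref{modext},'' and you have simply spelled out the details that this sentence suppresses. Your handling of the iterated $\R$-chain in the transitive variants and the use of $\nd{\cdot}$ for the $\ND$ direction are exactly what is needed to verify that the unchanged trace frame lands in the restricted class.
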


\section{Future Work}
The most notable extension of $\fs$ for which we could not treat a first-order analogue is $\m$. It is unclear whether a trace model-like construction would be apt for establishing completeness of first-order $\m$, as the trace model approach takes advantage of increasing domains, something that must be given up when the modal relation is required to be an equivalence relation. More nebulously, it seems likely that trace model constructions could find applications in establishing completeness in contexts where a significant amount of control over the behavior of one or more of the relations is needed. 

\bibliographystyle{tugboat}
\bibliography{fofs_bib}

\end{document}